\newcommand{\IND}{\mathop{\mathrm{ind}}}
\newcommand{\CORE}{\mathop{\mathrm{core}}}
\newcommand{\KER}{\mathop{\mathrm{ker}}}
\newcommand{\DIADEM}{\mathop{\mathrm{diadem}}}
\newcommand{\DEG}{\mathop{\mathrm{deg}}}
\newcommand{\D}{d}
\newcommand{\DC}{d_c}
\newcommand{\J}{\mathscr I}
\newtheorem{THEOREM}{Theorem}[section]
\newtheorem{LEMMA}[THEOREM]{Lemma}
\newtheorem{COROLLARY}[THEOREM]{Corollary}
\newtheorem{PROBLEM}[THEOREM]{Problem}
\newtheorem{CONJECTURE}[THEOREM]{Conjecture}
\newtheorem{PROCEDURE}[THEOREM]{Procedure}
\newtheorem{NOTE}[THEOREM]{Note}
\tikzstyle{vertex}=[circle, draw, fill=black, inner sep=0pt, minimum size=6pt]
\tikzstyle{redv}=[circle, draw=red,very thick, inner sep=0pt, minimum size=6pt]
\tikzstyle{bluv}=[circle, draw=blue, inner sep=3.5pt, minimum size=6pt]
\tikzstyle{bluvin}=[circle, draw=blue, fill=blue, inner sep=0pt, minimum size=6pt]
\newcommand{\vertex}{\node[vertex]}
\newcommand{\redv}{\node[redv]}
\newcommand{\bluv}{\node[bluv]}
\newcommand{\bluvin}{\node[bluvin]}
\title{\bf 
Problems on Matchings and Independent Sets of a Graph
}
\author{Amitava Bhattacharya, Anupam Mondal\\
\small School of Mathematics\\[-0.8ex]
\small Tata Institute of Fundamental Research\\[-0.8ex] 
\small Mumbai, India\\
\small\tt \{amitava, anupamm\} @\,math.tifr.res.in\\
\and
T. Srinivasa Murthy\\
\small Department of Computer Science and Automation\\[-0.8ex]
\small Indian Institute of Science\\[-0.8ex] 
\small Bangalore, India\\
\small\tt srinivasat @\,iisc.ac.in\\
}
\begin{document}
\maketitle

\begin{abstract} \noindent
Let $G$ be a finite simple graph. For $X \subset V(G)$, the \emph{difference} of $X$, $\D (X) := |X| - |N (X)|$ where
$N(X)$ is the neighborhood of $X$ and $\max \, \{\D(X):X\subset V(G)\}$  is called the
\emph{critical difference} of $G$. 
$X$ is called a \emph{critical set} if $\D (X)$ equals the critical difference 
and $\KER (G)$ is the intersection of all critical sets. It is known that $\KER (G)$ is an independent (vertex) set of $G$.  $\DIADEM (G)$ 
is the union of all critical independent sets. An independent set $S$ is an \emph{inclusion minimal set with $\D(S) > 0$} if no proper subset 
of $S$ has positive difference. 

A graph $G$ is called \emph{K\"{o}nig-Egerv\'{a}ry} if the sum of its independence number ($\alpha (G)$) and matching number 
($\mu (G)$) equals the order of $G$. It is known that bipartite graphs are K\"{o}nig-Egerv\'{a}ry.

In this paper, we study independent sets with positive difference for which every proper subset has a smaller difference and prove a 
result conjectured by Levit and Mandrescu in 2013. The conjecture states that for any graph, the number of inclusion minimal sets $S$ with $\D(S) > 0$ 
is at least the critical difference of the graph. We also give a short proof of the inequality 
$|\KER (G)| + |\DIADEM (G)| \le 2\alpha (G)$ (proved by Short in 2016). 

A characterization of unicyclic non-K\"{o}nig-Egerv\'{a}ry graphs is also presented and a conjecture which states that for 
such a graph $G$, the critical difference equals $\alpha (G) - \mu (G)$, is proved.

We also make an observation about $\KER (G)$ using Edmonds-Gallai Structure Theorem as a concluding remark.
 \end{abstract}
{\bf Mathematics Subject Classifications: } 05C69, 05C70, 05A20

\section{Introduction}
In this paper  $G$ is a finite simple graph with the vertex set $V(G)$ and the edge
set $E(G) \subset \binom{V(G)}{2}$. 
For $A\subset V(G)$, neighborhood of $A$, denoted by $N_G(A)$, is the set of all vertices adjacent to some
vertex in $A$. When there is no confusion the subscript $G$ may be dropped. The \emph{degree} of a vertex $v$ is the number of edges incident to $v$ and is denoted by $\DEG(v)$. If $U \subset V(G)$, then $G[U]$ is the graph with the vertex set $U$ and whose edges are precisely the edges of $G$ with both ends in $U$. A set $S$ of vertices is \emph{independent} if no two vertices from $S$ are
adjacent. The cardinality of a largest independent set (\emph {maximum independent set}) is denoted by $\alpha(G)$.
The reader may refer to any of the standard text books~\cite{diestel,lovaszplummer}
for basic notations.

This paper is based on the results in~\cite{2015arXiv150600255J, larson1, 2002_1, 2012_3, 2012_1,2013_2, 2013_1, 2013_3, short, 1990_1}. We state the following definitions which will help us to formulate the results proved in this paper.

Let $\IND(G) = \{S:S \mbox{ is an independent set of } G\}$,
$\Omega(G)=\{S\in \IND (G):|S|=\alpha (G)\}$ and
${\CORE}(G)=\cap\{S:S\in\Omega(G)\}$\cite{2002_1}. 

For $X\subset V(G)$, the number $|X| -|N(X)|$ is called the \emph{difference} of the set $X$ and is denoted by $\D_G (X)$. Again we drop the subscript $G$ in case of no ambiguity. The number $\max \, \{\D(X):X\subset V(G)\}$ is called the \emph{critical difference} of $G$ and is denoted by $\DC(G)$. A set $U\subset V(G)$
is \emph{critical} if $\D(U)=\DC(G)$ \cite{1990_1} and $\KER (G)$ is the intersection of all critical sets~\cite{2012_1}.
\emph{Diadem} of a graph is defined as the union of all critical independent sets and it is denoted by
$\DIADEM (G)$ 
\cite{2015arXiv150600255J}. One may observe that $\KER (G) \subset \CORE (G)$ \cite{2012_1} and thus $\KER (G) \in \IND (G)$.
An independent set $S$ is an inclusion minimal set with $\D(S) > 0$ 
if no proper subset of $S$ has positive difference \cite{2013_2}. 

A matching in a graph $G$ is a set $M$ of edges 
such that no two edges in $M$ share a
common vertex. 
Size of a largest possible matching (\emph {maximum matching}) is denoted by $\mu(G)$. A vertex is \emph{matched} 
(or \emph{saturated}) by $M$ if it is an endpoint of one of the edges in $M$. A perfect matching is a matching which 
matches all vertices of the graph. For two disjoint subsets $A$ and $B$ of $V(G)$ we say 
there is a \emph {matching from $A$ into $B$} if there is a matching $M$ such that any edge in $M$ joins a vertex in $A$ and a vertex in $B$ 
and all the vertices in $A$ are matched by $M$.

A graph $G$ is a K\"{o}nig-Egerv\'{a}ry (KE) graph
if $\alpha(G) + \mu(G) = |V(G)|$ \cite{dem, ster}. K\"{o}nig-Egerv\'{a}ry graphs have been well studied.
Levit and Mandrescu studied the critical difference, $\KER$, $\CORE$, $\DIADEM$ of graphs, properties of K\"{o}nig-Egerv\'{a}ry 
graphs and proved several results. Based on these results several natural conjectures and problems arose. The ones considered in 
this paper are stated below.

\begin{CONJECTURE}\cite{2013_2}
	\label{1.1}
	For any graph $G$, the number of inclusion minimal independent set $S$ such that $\D(S) > 0$ is at least $\DC(G)$.
\end{CONJECTURE}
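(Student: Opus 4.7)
The plan is to identify the inclusion minimal independent sets $S$ with $\D(S)>0$ that are contained in $\KER(G)$ with the circuits of a transversal matroid built on $\KER(G)$, and then invoke the elementary matroid fact that the number of circuits of a matroid is at least its nullity.

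First I would show that $\KER(G)$ is itself a critical set. This is not explicitly stated in the excerpt but follows from the \emph{supermodularity} of $\D$: for any $X,Y\subseteq V(G)$ one has $N(X\cup Y)=N(X)\cup N(Y)$ and $N(X\cap Y)\subseteq N(X)\cap N(Y)$, so $\D(X\cap Y)+\D(X\cup Y)\ge\D(X)+\D(Y)$. If $X$ and $Y$ are both critical this forces both $X\cap Y$ and $X\cup Y$ to be critical, and iterating over the finite family of all critical sets makes $\KER(G)$ critical. Writing $K:=\KER(G)$ and $d:=\DC(G)$, we now have an independent set with $|K|-|N(K)|=d$; and since $N_G(T)\subseteq N(K)$ for every $T\subseteq K$, the difference of any $T\subseteq K$ in $G$ equals its difference in the bipartite graph $H$ with parts $K$ and $N(K)$ carrying the induced edges of $G$.

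Next I would consider the transversal matroid $\mathcal{M}$ on ground set $K$ associated to the family $\{N_H(b):b\in N(K)\}$. By Hall's theorem a set $S\subseteq K$ is independent in $\mathcal{M}$ iff it is matchable into $N(K)$ iff $\D(T)\le 0$ for every $T\subseteq S$. The circuits of $\mathcal{M}$ are therefore precisely the inclusion minimal subsets $S\subseteq K$ with $\D(S)>0$; each such $S$ is independent in $G$ (being a subset of $K$) and its inclusion minimality in $K$ is the same as inclusion minimality in $V(G)$. By the defect form of Hall's theorem the rank of $\mathcal{M}$, which equals the maximum matching size in $H$, equals $|K|-d$, so $\mathcal{M}$ has nullity exactly $d$. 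Fixing any basis $I$ of $\mathcal{M}$, the $d$ fundamental circuits $C(I,e)\subseteq I\cup\{e\}$, one for each $e\in K\setminus I$, are pairwise distinct (distinguished by the unique element $e$ each contains outside $I$), yielding the required $\DC(G)$ inclusion minimal independent sets.

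The main obstacle is the first step, since the criticality of $\KER(G)$ is not in the excerpt; the supermodularity computation itself is short, but one must carefully lift pairwise stability under $\cap$ and $\cup$ to the intersection of all critical sets. Once this is in hand, the rest is a direct dictionary translation between Hall's theorem and the definition of the transversal matroid, together with the standard fundamental-circuit counting lemma; I expect no further real difficulty.
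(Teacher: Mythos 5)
Your proof is correct, but it takes a genuinely different route from the paper's. The paper first uses an auxiliary bipartite graph $H_X$ together with Theorem~\ref{2.4} to show that every inclusion minimal set with positive difference has difference exactly $1$ (Corollary~\ref{2.11}), and then proves the stronger Theorem~\ref{2.12}: starting from $X=\KER(G)$ it greedily extracts inclusion minimal sets $S_1,\dots,S_k$ with representatives $s_i\in S_i$ avoiding all later sets, and uses supermodularity of $\D$ to force $\D(S_1\cup\dots\cup S_k)\ge k$, so that the $S_i$ are distinct and their union is all of $X$. Your argument instead packages Hall's theorem into the transversal matroid on $\KER(G)$: the circuits are exactly the inclusion minimal subsets with positive difference, the nullity equals $\DC(G)$ by the defect form of Hall's theorem, and the fundamental circuits of any fixed basis supply $\DC(G)$ distinct such sets. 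This is clean and correct; note that the criticality of $\KER(G)$, which you rederive from supermodularity, is already available as Theorem~\ref{2.2}(i), and the independence of $\KER(G)$ (needed so that the circuits are independent sets of $G$) is also stated in the paper. Your approach buys brevity and gives Corollary~\ref{2.11} for free, since a circuit $C$ has matroid rank $|C|-1$ and hence $\D(C)=1$ by the same defect formula; what it does not immediately recover is the stronger structural conclusion of Theorem~\ref{2.12} that $\KER(G)$ is itself the union of $\DC(G)$ inclusion minimal sets, which in your language would require showing in addition that the transversal matroid has no coloops. For the stated conjecture, which only asks for a lower bound on the count, your argument is complete.
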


\begin{THEOREM}
	\label{1.2} For any graph $G$,
	$|\KER (G)| + |\DIADEM (G)| \le 2\alpha(G )$. 
	\footnote{Conjectured in~\cite{2015arXiv150600255J} and proved in~\cite{short}.}
\end{THEOREM}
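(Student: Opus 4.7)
The plan is to bound $|\KER(G)|$ and $|\DIADEM(G)|$ via two maximum independent sets. For the former, the excerpt records that $\KER(G)$ is an independent set, so $|\KER(G)|\le\alpha(G)$ immediately.

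For $|\DIADEM(G)|$, the key step is to show that $G[\DIADEM(G)]$ is bipartite, so that $\DIADEM(G)$ can be written as $A\sqcup B$ with each of $A,B$ independent in $G$. Granting this, I would extend $A$ to a maximum independent set $\Psi_1$ of $G$ and $B$ to a maximum independent set $\Psi_2$ of $G$. Both $\Psi_1$ and $\Psi_2$ must contain $\KER(G)$, because $\KER(G)\subseteq\CORE(G)$ lies in every maximum independent set, so $\KER(G)\subseteq\Psi_1\cap\Psi_2$, and by inclusion--exclusion
\[|\Psi_1\cup\Psi_2|\ =\ |\Psi_1|+|\Psi_2|-|\Psi_1\cap\Psi_2|\ \le\ 2\alpha(G)-|\KER(G)|.\]
Since $\DIADEM(G)=A\cup B\subseteq\Psi_1\cup\Psi_2$, this gives $|\DIADEM(G)|\le 2\alpha(G)-|\KER(G)|$, and adding $|\KER(G)|\le\alpha(G)$ yields $|\KER(G)|+|\DIADEM(G)|\le 2\alpha(G)$.

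The real obstacle is proving that $G[\DIADEM(G)]$ is bipartite. Two ingredients I would use are: Larson's matching characterisation of critical independent sets---each critical independent set $S$ admits a matching from $N(S)$ into $S$ saturating $N(S)$---and the supermodularity $\D(X\cup Y)+\D(X\cap Y)\ge \D(X)+\D(Y)$, which follows from $N(X\cup Y)=N(X)\cup N(Y)$ and inclusion--exclusion. A hypothetical odd cycle in $G[\DIADEM(G)]$ would thread through vertices drawn from several critical independent sets (each consecutive pair of cycle vertices belonging to two different such sets, by independence); splicing together the Larson matchings for those sets along the cycle and invoking supermodularity should produce either an independent set of $G$ of size larger than $\alpha(G)$ or a vertex subset of $\D$-value strictly exceeding $\DC(G)$, a contradiction in either case. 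Formalising this cycle-chasing step---and verifying that it genuinely rules out odd cycles---is where the technical work of the proof concentrates.
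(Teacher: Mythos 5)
Your reduction is clean as far as it goes: \emph{if} $\DIADEM(G)$ could be partitioned into two sets $A,B$ that are independent in $G$, then extending each to a maximum independent set (both of which contain $\KER(G)\subset\CORE(G)$), applying inclusion--exclusion, and using $\DIADEM(G)\subset \Psi_1\cup\Psi_2$ would indeed give $|\DIADEM(G)|\le 2\alpha(G)-|\KER(G)|$, which rearranges to the claim. (Your final step of ``adding $|\KER(G)|\le\alpha(G)$'' is a non sequitur, but harmless.)

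However, the lemma on which everything rests --- that $G[\DIADEM(G)]$ is bipartite --- is false, so no amount of formalising the cycle-chasing will establish it. Take the ``net'' graph: a triangle $xyz$ with a pendant vertex attached to each corner, say $x'$ adjacent to $x$, $y'$ to $y$, and $z'$ to $z$. This graph has a perfect matching $\{xx',yy',zz'\}$, so $\D(U)\le 0$ for every $U\subset V(G)$ and $\DC(G)=0$. The sets $\{x,y',z'\}$, $\{y,x',z'\}$, $\{z,x',y'\}$ and $\{x',y',z'\}$ are all independent with difference $0$, hence all are critical independent sets, and therefore $\DIADEM(G)=V(G)$. Thus $G[\DIADEM(G)]$ contains the triangle $xyz$ and is not bipartite; in fact $\DIADEM(G)$ cannot be covered by two independent sets at all, since the net graph has chromatic number $3$. (The theorem itself still holds there: $\KER(G)=\emptyset$ and $|\DIADEM(G)|=6=2\alpha(G)$, so the failure is in the method, not the statement.) The paper proceeds differently: it fixes a maximal critical independent set $X$ and proves the containment $\DIADEM(G)\subset X\cup N(X)-N(\KER(G))$, using supermodularity of $\D$ together with the exchange identity $|N(X)\cap Y|=|N(Y)\cap X|$ for critical independent sets $X,Y$ (itself a consequence, via Hall's theorem, of Larson's matching from $N(X)$ into $X$); the count $|X|+|N(X)|-|N(\KER(G))|=2|X|-\left(\DC(G)+|N(\KER(G))|\right)=2|X|-|\KER(G)|\le 2\alpha(G)-|\KER(G)|$ then finishes the proof. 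You would need to replace your bipartiteness lemma with a containment statement of this kind rather than try to repair it.
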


\begin{CONJECTURE}
	\label{1.3}
	For a unicyclic non-KE graph $G$, $\DC(G)=\alpha (G)-\mu (G)$. 
	\footnote {Stated by Levit in a talk at Tata Institute of Fundamental Research in 2014.}
\end{CONJECTURE}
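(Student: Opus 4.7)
My plan is to reduce the identity $\DC(G)=\alpha(G)-\mu(G)$ to proving the two separate equalities $\alpha(G)=\mu(G)$ and $\DC(G)=0$, each using the structural characterization of unicyclic non-KE graphs that the paper establishes earlier. As a preliminary observation, the unique cycle $C$ of $G$ must be odd: if it were even then $G$ would be bipartite and hence KE. Write $|V(C)|=2k+1$.

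For the first equality, I would extract from the characterization the two consequences that $n:=|V(G)|$ is odd and that, for every $v\in V(C)$, the graph $G-v$ admits a perfect matching. The latter gives $\mu(G)\ge(n-1)/2$, and parity forces $\mu(G)=(n-1)/2$. Since $G$ is non-KE we have $\alpha(G)+\mu(G)\le n-1$, so $\alpha(G)\le(n-1)/2=\mu(G)$. For the reverse inequality I would assemble an independent set of size $(n-1)/2$ from a maximum independent set of the odd cycle $C$ (contributing $k$ vertices) together with suitably chosen vertices from the near-perfect matchings on the pendant trees $T_v-v$ supplied by the characterization, yielding $\alpha(G)\ge(n-1)/2$. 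Hence $\alpha(G)=\mu(G)$ and $\alpha(G)-\mu(G)=0$.

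For $\DC(G)=0$, note first that $\D(\emptyset)=0$ gives $\DC(G)\ge 0$, and that a standard reduction restricts the relevant maximum to independent sets: for any $X\subset V(G)$ the subset $X_0:=\{u\in X:N(u)\cap X=\emptyset\}$ is independent and satisfies $\D(X_0)\ge\D(X)$, because every vertex of $X\setminus X_0$ is absorbed into $N(X)$. So it suffices to show $|N(I)|\ge|I|$ for every independent $I\subset V(G)$. Given such an $I$, because the independence number of $C_{2k+1}$ is only $k<|V(C)|$ we may pick some $v\in V(C)\setminus I$; by the characterization, $G-v$ has a perfect matching $M$. Since $v\notin I$, each $u\in I$ is saturated by some edge $uu'\in M$; independence of $I$ forces $u'\notin I$, while $uu'\in E(G)$ places $u'$ in $N(I)$. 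The map $u\mapsto u'$ is injective (it is part of a matching) and hence exhibits $|N(I)|\ge|I|$, giving $\D(I)\le 0$ and so $\DC(G)\le 0$.

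The principal obstacle is packaged into the structural characterization itself: verifying that every vertex of the odd cycle can serve as the unmatched vertex of some maximum matching---equivalently, that $G-v$ has a perfect matching for each $v\in V(C)$---is what drives both $\alpha=\mu$ and the Hall-type injection above. Once that characterization is in hand, the remainder of the proof is a short pigeonhole choice on $V(C)$ combined with the standard observation that a matching carries an independent set injectively into its neighborhood.
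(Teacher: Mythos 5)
Your reduction is based on two claims --- that $\alpha(G)=\mu(G)$ and that $\DC(G)=0$ for every unicyclic non-KE graph $G$ --- and both are false, so the proposal does not prove the conjecture. A minimal counterexample: take a triangle on $\{a,b,c\}$, attach a path $a\,u_1\,u_2$, and attach one further leaf $u_3$ to $u_1$. This graph is unicyclic with $|V(G)|=6$, $\alpha(G)=3$ (e.g.\ $\{b,u_2,u_3\}$), $\mu(G)=2$, hence $\alpha(G)+\mu(G)=5=|V(G)|-1$, so it is non-KE; but $\alpha(G)-\mu(G)=1$ and $\D(\{u_2,u_3\})=2-1=1$, so $\DC(G)\ge 1$. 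The auxiliary facts you propose to ``extract from the characterization'' also fail here: $|V(G)|$ is even, and $G-v$ has an odd number of vertices for every $v\in V(C)$, so it cannot have a perfect matching. In effect you are treating $G$ as if it were factor-critical; that is true for the bare odd cycle (and for the graphs produced without ever invoking Step~(4) of Procedure~4.1), but attaching a second leaf to a red vertex destroys it. Your Hall-type injection in the second half is sound \emph{given} a perfect matching of $G-v$, and the reduction to independent sets via $X_0$ is fine (it is essentially Note~2.10), but the premise it rests on is unavailable.

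The quantity $\alpha(G)-\mu(G)$ is genuinely variable over this class --- the paper shows it equals the number of applications of Step~(4) in any construction sequence --- so any correct proof must compute both sides rather than show each vanishes. The paper's route is: color the vertices via Procedure~4.1, prove $\alpha(G)=|B|+m$ and $\mu(G)=|R|+m$ (Theorem~4.11, Corollary~4.13, where $B$ and $R$ are the black and red vertices and $2m+1$ is the cycle length), and then show $B$ is a critical set with $N(B)=R$ and a matching from $R$ into $B$, giving $\DC(G)=|B|-|R|=\alpha(G)-\mu(G)$ via Theorem~2.15. If you want to salvage your approach, you would need to replace ``$G-v$ has a perfect matching'' with a statement that survives the pendant trees, which essentially forces you back to an analysis of the attached trees like the one in Section~4.
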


\begin{PROBLEM}\cite{2015arXiv150600255J, 2013_1}
	\label{1.4}
	Characterize graphs such that $\CORE (G)$ is critical.
\end{PROBLEM}

\begin{PROBLEM}\cite{2015arXiv150600255J, 2013_1}
	\label{1.5}
	Characterize graphs with $\KER (G) = \CORE (G)$.
\end{PROBLEM}

In Section~2, Conjecture~\ref{1.1} and related results are proved. In Section~3 a new short proof of Theorem~\ref{1.2} is given.
A characterization of unicyclic non-K\"{o}nig-Egerv\'{a}ry graph is presented in Section~4 and as a corollary
Conjecture~\ref{1.3} is deduced. In the concluding section Edmonds-Gallai structure Theorem is used to make an
observation regarding $\KER (G)$. It may be useful for Problems~\ref{1.4} and
\ref{1.5}.

\section{On Minimum Number of Inclusion Minimal Sets with Positive Difference}
In this section we study $X \in \IND (G)$ with $\D (X)>0$ such that for all $Y \subsetneq X$, $\D(Y)<\D(X)$ and give a proof of Conjecture~\ref{1.1}.
There are several results that led to the formulation of this conjecture.  Some of them are listed 
below as they help to understand the proof  or they are used in the proof of the conjecture.

\begin{THEOREM} \cite{larson1} \label{2.1}
	There is a matching from $N(S)$ into $S$ for every critical independent
	set $S$.
\end{THEOREM}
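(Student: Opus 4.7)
The plan is to prove this via Hall's marriage theorem applied to the natural bipartite subgraph between $S$ and $N(S)$. Since $S$ is independent, every edge with an endpoint in $S$ crosses between $S$ and $N(S)$, so asking for a matching from $N(S)$ into $S$ is precisely asking for a perfect matching of $N(S)$ in this bipartite graph, which by Hall reduces to verifying that $|N_G(T) \cap S| \ge |T|$ for every $T \subseteq N(S)$.

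Suppose, toward a contradiction, that Hall's condition fails: there exists $T \subseteq N(S)$ with $A := N_G(T) \cap S$ satisfying $|A| < |T|$. The key move is to consider the modified set $S^\ast := S \setminus A \subseteq V(G)$ and compare its difference with that of $S$. By construction, every vertex in $T$ has all of its $S$-neighbors inside $A$, so no vertex of $T$ is adjacent to any vertex of $S^\ast$; hence $T \cap N(S^\ast) = \emptyset$. Combined with the inclusion $N(S^\ast) \subseteq N(S)$, this yields $N(S^\ast) \subseteq N(S) \setminus T$, so $|N(S^\ast)| \le |N(S)| - |T|$.

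Putting these inequalities together gives
\[
\D(S^\ast) = |S^\ast| - |N(S^\ast)| \ge \bigl(|S| - |A|\bigr) - \bigl(|N(S)| - |T|\bigr) = \D(S) + \bigl(|T| - |A|\bigr) > \D(S),
\]
which contradicts the criticality of $S$, since $\D(S) = \DC(G)$ is by definition the maximum of $\D(X)$ over all $X \subseteq V(G)$ (independence of $S^\ast$ is automatic as $S^\ast \subseteq S$, but is not even required). This contradiction establishes Hall's condition and finishes the proof.

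There is no real obstacle here; the only subtlety to keep in mind when writing it up is the correct direction of the matching in the definition — namely that we must saturate all of $N(S)$, not all of $S$ — so Hall's condition is checked on subsets of $N(S)$, and the "flipping" trick of passing from $S$ to $S \setminus A$ is what converts a Hall-violator into a strictly better set than the critical $S$.
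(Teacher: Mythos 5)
Your proof is correct, and it is exactly the Hall's-Marriage-Theorem verification that the paper alludes to (Theorem~\ref{2.1} is cited from Larson, and the proof of Lemma~\ref{3.1} notes it ``can be verified'' via Hall's theorem): a Hall violator $T \subseteq N(S)$ would let you delete $A = N(T)\cap S$ from $S$ and strictly increase the difference, contradicting criticality. The key step, that $T \cap N(S\setminus A) = \emptyset$ so $|N(S\setminus A)| \le |N(S)| - |T|$, is justified correctly, and this is the same deletion trick the paper itself uses in Note~\ref{2.10} and Theorem~\ref{2.15}.
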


\begin{THEOREM}\cite{2012_1}\label{2.2}
	For every graph $G$, the following assertions are true:
	\begin{enumerate}[label=(\roman*)]
		\item $\KER (G)$ is the unique minimal critical independent set of $G$.
		\item $\KER (G) \subset \CORE(G)$.
	\end{enumerate}
\end{THEOREM}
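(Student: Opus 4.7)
My plan is to derive both assertions from the supermodularity of the difference function combined with Theorem~\ref{2.1}. The starting point is the submodularity of $N$: for any $A,B\subset V(G)$, $N(A\cup B)=N(A)\cup N(B)$ and $N(A\cap B)\subset N(A)\cap N(B)$, hence
\[
\D(A)+\D(B)\le \D(A\cup B)+\D(A\cap B).
\]
If $A$ and $B$ are both critical, i.e.\ $\D(A)=\D(B)=\DC(G)$, the two terms on the right must also equal $\DC(G)$. So the critical sets form a lattice under $\cap$ and $\cup$, with a unique minimum element, which by definition is $\KER(G)$.

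For part (i), I would first produce a critical independent set. Starting from any critical $U$, the candidate is $I:=U\setminus N(U)$, which is automatically independent; verifying $\D(I)=\DC(G)$ is the central technical step, and I expect it to follow by applying Theorem~\ref{2.1} together with a K\"onig-type matching argument on the bipartite subgraph between $U\cap N(U)$ and $N(U)\setminus U$ to cancel the contribution of the non-independent part of $U$. Once at least one critical independent set is known to exist, the supermodular inequality restricted to independent subsets shows that the family of critical independent sets is closed under intersection, and hence admits a unique minimum $K^*$. To match $K^*$ with $\KER(G)$: for any critical $U$ and any critical independent $S$, $U\cap S\subset S$ is independent and
\[
\D(U\cap S)+\D(U\cup S)\ge 2\DC(G),
\]
which forces $\D(U\cap S)=\DC(G)$; hence $K^*\subset U\cap S\subset U$, and intersecting over all critical $U$ gives $K^*\subset \KER(G)$. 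The reverse inclusion is immediate because critical sets form a larger family than critical independent sets, so $\KER(G)\subset K^*$. This proves $\KER(G)=K^*$ and hence (i).

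For part (ii), given any maximum independent set $I$, I would form the swap $J:=(I\setminus N(\KER(G)))\cup \KER(G)$. This is independent, since any edge joining the two pieces would place an endpoint in $N(\KER(G))$, a contradiction. The matching $\phi$ from $N(\KER(G))$ into $\KER(G)$ provided by Theorem~\ref{2.1} restricts to an injection $I\cap N(\KER(G))\hookrightarrow \KER(G)\setminus I$, since an image landing in $I$ would produce an edge inside $I$; combined with the maximality bound $|J|\le |I|$, this forces $|I\cap N(\KER(G))|=|\KER(G)\setminus I|$, so $J$ is also a maximum independent set containing $\KER(G)$. To upgrade from $\KER(G)\subset J$ to $\KER(G)\subset I$ for the original $I$, I plan to analyze the symmetric difference $I\triangle J=(I\cap N(\KER(G)))\sqcup(\KER(G)\setminus I)$, on which $\phi$ is a perfect matching; if $\KER(G)\setminus I$ were nonempty, alternating-path considerations combined with the minimality of $\KER(G)$ among critical independent sets should yield a strictly smaller critical independent subset, contradicting~(i). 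This final upgrade—turning containment in \emph{some} maximum independent set $J$ into containment in \emph{every} maximum independent set—is the step I expect to be the main obstacle, since supermodularity alone does not suffice and the argument genuinely requires the matching structure of Theorem~\ref{2.1}.
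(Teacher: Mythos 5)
The paper itself offers no proof of Theorem~\ref{2.2} --- it is quoted from~\cite{2012_1} --- so your proposal can only be measured against the standard argument. Your architecture (supermodularity of $\D$ makes the critical sets a lattice; identify $\KER(G)$ with the minimum critical \emph{independent} set; a swap argument for $\CORE$) is the right one, and the bookkeeping in part (i) --- $K^*\subset U\cap S\subset U$ for every critical $U$, plus the trivial reverse inclusion --- is correct. But the two steps you defer are genuine gaps, and one of them is the crux of the theorem. First, producing a critical independent set: invoking Theorem~\ref{2.1} here is circular, since that theorem is a statement \emph{about} critical independent sets, which is precisely what you are trying to exhibit. Fortunately no matching is needed. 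If $U$ is critical and $I=U-N(U)$, then any neighbor $v$ of a vertex $x\in I$ lies in $N(U)$, and $v\notin U$ (otherwise $x\in N(U)$, contradicting $x\in I$); hence $N(I)\subset N(U)-(U\cap N(U))$ and
$\D(I)\ge \bigl(|U|-|U\cap N(U)|\bigr)-\bigl(|N(U)|-|U\cap N(U)|\bigr)=\D(U)$.

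The second gap is the final step of part (ii), which you yourself flag as ``the main obstacle'': passing from $\KER(G)\subset J$ for the swapped set $J$ to $\KER(G)\subset I$ for the arbitrary maximum independent set $I$. Your swap correctly yields $|I\cap N(K)|\ge |K-I|$ where $K=\KER(G)$ (from $|J|\le |I|$), but the alternating-path plan as stated does not close the argument. The missing observation is that $K\cap I$ is itself critical: since $I$ is independent, no vertex of $I\cap N(K)$ is adjacent to any vertex of $K\cap I\subset I$, so $N(K\cap I)\subset N(K)-(I\cap N(K))$, whence
$\D(K\cap I)\ge |K|-|K-I|-|N(K)|+|I\cap N(K)|\ge \D(K)=\DC(G)$.
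If $K-I\ne\emptyset$ this exhibits a critical independent set properly contained in $K$, contradicting part (i); note that Theorem~\ref{2.1} is not needed for this direction at all. Until this (or an equivalent) step is supplied, assertion (ii) is not proved.
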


\begin{THEOREM}\cite{2012_1}
	\label{2.3}
	For a graph $G$, the following assertions are true:
	\begin{enumerate}[label=(\roman*)]
		\item The function $\D$ is supermodular, i.e., $\D(X \cup Y) + \D(X \cap Y) \ge \D(X) + \D(Y)$ for
		every $X, Y \subset V (G)$.
		\item If $X$ and $Y$ are critical in $G$, then $X \cup Y$ and $X \cap Y$ are critical as well.
	\end{enumerate}
\end{THEOREM}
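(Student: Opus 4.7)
The plan is to prove part (i) directly from the identity $|X| + |Y| = |X \cup Y| + |X \cap Y|$, so that the whole question reduces to a claim about the neighborhoods. First I would note the two set-theoretic facts $N(X \cup Y) = N(X) \cup N(Y)$ (since a vertex is adjacent to some vertex of $X \cup Y$ iff it is adjacent to a vertex of $X$ or of $Y$) and $N(X \cap Y) \subset N(X) \cap N(Y)$ (any neighbor of every vertex in $X \cap Y$ is in particular a neighbor of something in $X$ and something in $Y$). The second inclusion is generally strict, which is exactly what makes $\D$ supermodular rather than modular.

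With those in hand, standard inclusion-exclusion applied to $N(X)$ and $N(Y)$ gives
\[
|N(X)| + |N(Y)| = |N(X) \cup N(Y)| + |N(X) \cap N(Y)| \ge |N(X \cup Y)| + |N(X \cap Y)|.
\]
Subtracting this from $|X| + |Y| = |X \cup Y| + |X \cap Y|$ yields
\[
\D(X) + \D(Y) \le \D(X \cup Y) + \D(X \cap Y),
\]
which is exactly the supermodularity claim in (i). I do not expect any real obstacle here; the only thing worth being careful about is the non-equality $N(X \cap Y) \subsetneq N(X) \cap N(Y)$ in general, but for the inequality above we only need one direction, and we have it.

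For part (ii), I would deduce it immediately from (i). If $X$ and $Y$ are critical, then $\D(X) = \D(Y) = \DC(G)$, so (i) gives $\D(X \cup Y) + \D(X \cap Y) \ge 2\,\DC(G)$. By definition of the critical difference, neither summand can exceed $\DC(G)$, so equality must hold in both, i.e., $\D(X \cup Y) = \D(X \cap Y) = \DC(G)$. Thus $X \cup Y$ and $X \cap Y$ are both critical. The only step requiring any attention here is the boundary case $X \cap Y = \emptyset$, which is unproblematic since $\D(\emptyset) = 0$ is still a legitimate value to compare against $\DC(G)$ (and in fact forces $\DC(G) \le \D(X \cup Y)$, giving equality the same way). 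Overall the proof is short and the main idea is simply the supermodularity of $|X|-|N(X)|$ via inclusion-exclusion on neighborhoods.
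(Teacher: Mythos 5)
Your proof is correct: the identity $N(X\cup Y)=N(X)\cup N(Y)$, the inclusion $N(X\cap Y)\subset N(X)\cap N(Y)$, and inclusion-exclusion give (i), and (ii) follows formally since neither $\D(X\cup Y)$ nor $\D(X\cap Y)$ can exceed $\DC(G)$. The paper states Theorem~\ref{2.3} as an imported result from Levit and Mandrescu and gives no proof of its own, but your argument is the standard one for this fact, so there is nothing to compare beyond noting that it checks out.
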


\begin{THEOREM} \cite{2013_1}
	\label{2.4}
	If $G$ is a bipartite graph, then $\KER (G) = \CORE (G)$.
\end{THEOREM}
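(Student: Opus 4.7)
The inclusion $\KER(G)\subseteq\CORE(G)$ is Theorem~\ref{2.2}(ii); the plan is to establish the reverse. Set $K:=\KER(G)$; by Theorem~\ref{2.2}(i) this is a critical independent set, and Theorem~\ref{2.1} supplies a matching from $N(K)$ into $K$. Let $H:=G[V(G)\setminus(K\cup N(K))]$.

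The first step is to show $\DC(H)=0$. Since no vertex of $V(H)$ has a neighbor in $K$, for every $X\subseteq V(H)$ one has $N_G(K\cup X)=N(K)\sqcup N_H(X)$, yielding
\[
\D_G(K\cup X)=\DC(G)+\D_H(X),
\]
and the maximality of $\DC(G)$ forces $\D_H(X)\le 0$. Here bipartiteness enters: if $A_H,B_H$ denote the two bipartition classes of $H$, applying $|X|\le|N_H(X)|$ to subsets of each class gives Hall's condition in both directions, producing matchings saturating both classes. These force $|A_H|=|B_H|$ and yield a perfect matching of $H$, so $A_H$ and $B_H$ are disjoint maximum independent sets of $H$, and hence $\CORE(H)=\emptyset$.

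Next I would establish $\alpha(G)=|K|+\alpha(H)$. For any independent $I\subseteq V(G)$, the matching from $N(K)$ into $K$ combined with independence of $I$ gives $|I\cap(K\cup N(K))|\le|K|$: each matched pair contributes at most one vertex to $I$, and there are $|K|-|N(K)|=\DC(G)$ unmatched vertices of $K$. Hence $|I|\le|K|+\alpha(H)$, with equality realized by $K\cup J$ for any maximum independent set $J$ of $H$; in particular every such $K\cup J$ is a maximum independent set of $G$.

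To finish, fix $v\notin K$. If $v\in N(K)$, any $K\cup J$ (with $J$ maximum independent in $H$) is a maximum independent set of $G$ avoiding $v$. If $v\in V(H)$, the vanishing of $\CORE(H)$ furnishes a maximum independent set $J_v$ of $H$ with $v\notin J_v$, and $K\cup J_v$ is again a maximum independent set of $G$ avoiding $v$. Either way $v\notin\CORE(G)$, so $\CORE(G)\subseteq K=\KER(G)$. The main obstacle is the step $\DC(H)=0\Rightarrow H$ has a perfect matching, which genuinely requires bipartiteness (e.g.\ the triangle has $\DC=0$ yet no perfect matching); the remainder is careful bookkeeping using the matching from Theorem~\ref{2.1}.
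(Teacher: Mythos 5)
The paper does not prove Theorem~\ref{2.4} at all: it is quoted from the Levit--Mandrescu reference as background, so there is no internal proof to compare yours with, and the question is only whether your argument stands on its own. It does. The decomposition $V(G)=K\sqcup N(K)\sqcup V(H)$ with $K=\KER(G)$ works because no vertex of $V(H)$ has a neighbour in $K$ (such a vertex would lie in $N(K)$); this is what justifies both the identity $\D_G(K\cup X)=\DC(G)+\D_H(X)$ --- for which you should also note explicitly that $N_G(X)\cap K=\emptyset$, which follows from the same observation --- and the independence of $K\cup J$ for $J$ independent in $H$. From $\D_H\le 0$, Hall's condition applied to each colour class of the bipartite graph $H$ gives matchings saturating both classes, hence $|A_H|=|B_H|$ and a perfect matching, so $A_H$ and $B_H$ are disjoint maximum independent sets of $H$ and $\CORE(H)=\emptyset$; your count $|I\cap(K\cup N(K))|\le |N(K)|+(|K|-|N(K)|)=|K|$ via the matching of Theorem~\ref{2.1} correctly yields $\alpha(G)=|K|+\alpha(H)$, and the two cases $v\in N(K)$ and $v\in V(H)$ each exhibit a maximum independent set of $G$ omitting $v$, giving $\CORE(G)\subseteq K$; the reverse inclusion is Theorem~\ref{2.2}(ii). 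Your closing remark correctly isolates the single use of bipartiteness: $\DC(H)=0$ does not force a perfect matching in general (the triangle), and indeed the theorem itself fails for non-bipartite graphs (a triangle with a pendant vertex has $\KER=\emptyset$ but $\CORE$ equal to the pendant vertex). The proof is complete and correct.
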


\begin{THEOREM} \cite{2013_2}
	\label{2.5}
	For a vertex $v$ in a graph $G$, the following assertions hold:
	\begin{enumerate}[label=(\roman*)]
		\item $\DC(G - v) = \DC(G) - 1$ if and only if $v \in \KER (G)$ ;
		\item if $v \in \KER (G)$, then $\KER (G - v) \subset \KER (G) - v $.
	\end{enumerate}
\end{THEOREM}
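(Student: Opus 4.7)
My plan is to establish (i) first, since (ii) will fall out as a short corollary. For the forward direction of (i), I argue the contrapositive: if $v \notin \KER(G)$, then because $\KER(G)$ is the intersection of all critical sets, some critical set $U$ of $G$ must omit $v$. Viewing $U$ as a subset of $V(G-v)$, deleting $v$ can only remove the single vertex $v$ from $N_G(U)$, so $\D_{G-v}(U) \ge \D_G(U) = \DC(G)$, whence $\DC(G-v) \ge \DC(G)$, which is strictly larger than $\DC(G)-1$.

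For the backward direction, set $K = \KER(G)$. The first step is a structural observation that $N_G(v) \subset N_G(K \setminus \{v\})$: by Theorem~\ref{2.2}(i), $K$ is the unique minimal critical independent set, so $K \setminus \{v\} \subsetneq K$ fails to be critical and $\D_G(K \setminus \{v\}) \le \DC(G) - 1$. Writing $N_G(K) = N_G(K \setminus \{v\}) \cup N_G(v)$ and using the independence of $K$ (which gives $v \notin N_G(K \setminus \{v\})$), the last inequality expands to $|N_G(v) \setminus N_G(K \setminus \{v\})| \le 0$, forcing equality $N_G(K) = N_G(K \setminus \{v\})$. As an immediate consequence, $\D_{G-v}(K \setminus \{v\}) = \DC(G) - 1$, certifying the lower bound $\DC(G-v) \ge \DC(G) - 1$ and identifying $K \setminus \{v\}$ as a candidate critical set in $G-v$.

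The upper bound $\DC(G - v) \le \DC(G) - 1$ is the main obstacle. Suppose for contradiction that $S \subset V(G-v)$ satisfies $\D_{G-v}(S) \ge \DC(G)$. Since $\D_{G-v}(S) = \D_G(S) + [v \in N_G(S)]$, either $S$ is itself a critical set of $G$ avoiding $v$ (immediate contradiction with $v \in \KER(G)$), or $v \in N_G(S)$ and $\D_G(S) = \DC(G) - 1$. Supermodularity of $\D_G$ (Theorem~\ref{2.3}(i)) applied to $S$ and $K$, combined with $v \in K \setminus S$ making $S \cap K \subsetneq K$ non-critical (so $\D_G(S \cap K) \le \DC(G) - 1$), forces $S \cup K$ to be critical in $G$. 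The hard step is to convert this into a contradiction: my plan is to invoke the matching $M$ from $N_G(K)$ into $K$ guaranteed by Theorem~\ref{2.1}, and to use the inclusion $N_G(v) \subset N_G(K \setminus \{v\})$ together with a vertex $u \in S \cap N_G(v)$ to run an augmenting-path swap inside $M$, producing an independent critical set of $G$ that excludes $v$ and hence contradicts $v \in \KER(G)$.

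Finally, (ii) is a quick consequence of (i): once $\DC(G-v) = \DC(G) - 1$ is known, the set $K \setminus \{v\}$ achieves this value in $G - v$ by the computation above, so it is critical in $G - v$. Since $\KER(G-v)$ is the intersection of all critical sets of $G - v$, it must be contained in $K \setminus \{v\} = \KER(G) - v$, finishing (ii).
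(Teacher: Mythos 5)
First, a point of reference: the paper does not prove Theorem~\ref{2.5} at all --- it is quoted from Levit and Mandrescu~\cite{2013_2} as a background result --- so your proposal has to stand on its own. Much of it does. The contrapositive argument for the forward direction of (i) is correct (if $v\notin\KER(G)$, then $\KER(G)$ itself is a critical set avoiding $v$, and deleting $v$ cannot decrease the difference of a set that does not contain $v$). Your derivation of $N_G(K)=N_G(K\setminus\{v\})$ from the non-criticality of $K\setminus\{v\}$, the resulting lower bound $\DC(G-v)\ge\DC(G)-1$, and the deduction of (ii) from (i) via the criticality of $K\setminus\{v\}$ in $G-v$ are all sound.

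The gap is exactly where you flag it: the upper bound $\DC(G-v)\le\DC(G)-1$, which is the real content of the theorem. Your reduction to the case $v\in N_G(S)$, $\D_G(S)=\DC(G)-1$, with $S\cup K$ critical in $G$, is fine but yields no contradiction by itself --- a critical set \emph{containing} $v$ is perfectly compatible with $v\in\KER(G)$. The promised finish, ``run an augmenting-path swap inside $M$, producing an independent critical set of $G$ that excludes $v$,'' simply asserts the conclusion you need; no path is specified, and it is not explained why the swapped configuration would certify a set of difference $\DC(G)$ avoiding $v$ (note that Theorem~\ref{2.15} cannot certify criticality of such a set, since a critical set must contain $\KER(G)\ni v$). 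The difficulty is a persistent ``off by one'': every natural set built from $S$ and $K$ that avoids $v$, for instance $S\cup(K\setminus\{v\})$, has $G$-difference only $\DC(G)-1$, because the one unit gained from $v\in N_G(S)$ disappears the moment one measures differences in $G$ rather than $G-v$; supermodularity alone does not close this. A further unaddressed point: to speak of matchings into $S$ you need $S$ independent, which requires invoking Zhang's result that the critical difference is attained on independent sets. As written, the implication ``$v\in\KER(G)\Rightarrow\DC(G-v)<\DC(G)$'' is not proved; you should either carry out the alternating-path argument in full or cite~\cite{2013_2} for this statement, as the paper does.
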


\begin{THEOREM}\cite{2013_2}
	\label{2.6}
	If $\KER (G) \neq \emptyset$, then\\
	$\KER(G) = \cup\{ S : S$ is an inclusion minimal independent set with $\D(S) > 0 \}$
\end{THEOREM}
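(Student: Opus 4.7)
Let $\mathcal{M}$ denote the family of inclusion minimal independent sets $S$ with $\D(S) > 0$, and write $K := \KER(G)$. The plan is to prove the two inclusions separately. For $\bigcup \mathcal{M} \subseteq K$, I would apply supermodularity of $\D$ (Theorem~\ref{2.3}(i)) to $S \in \mathcal{M}$ and $K$: since $K$ is critical, $\D(S \cup K) \le \DC(G) = \D(K)$, and supermodularity gives $\D(S \cap K) \ge \D(S) > 0$. The inclusion minimality of $S$ then forces $S \cap K = S$, i.e., $S \subseteq K$.

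For the reverse inclusion I must exhibit, for every $v \in K$, some $S_v \in \mathcal{M}$ containing $v$. By Theorem~\ref{2.1} there is a matching $M$ from $N(K)$ into $K$, necessarily leaving $\DC(G)$ vertices of $K$ unmatched. The first key step is to show that, after replacing $M$ by a matching of the same size, $v$ may be assumed unmatched. Let $R$ be the set of vertices reachable from the unmatched $K$-vertices by $M$-alternating paths starting with a non-matching edge. A level-by-level analysis yields $N(K \cap R) = N(K) \cap R$, $N(K \setminus R) = N(K) \setminus R$, and that $M$ restricts to a bijection $N(K) \setminus R \to K \setminus R$. Consequently $\D(K) = \D(K \cap R) + \D(K \setminus R)$ with $\D(K \setminus R) = 0$; were $K \setminus R$ nonempty, $K \cap R$ would be a proper critical independent subset of $K$, contradicting Theorem~\ref{2.2}(i). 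Hence $v \in R$, and flipping an alternating path from an unmatched vertex to $v$ produces a matching $M'$ of the same size with $v$ unmatched.

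Using $M'$, I would define $S_v$ as the smallest set containing $v$ and closed under the rule ``$u \in N(S_v) \Rightarrow M'(u) \in S_v$'', built by BFS from $v$ along the digraph whose arcs are $w \to M'(u)$ for $u \in N(w)$. Since $M'$ is injective and $v$ is not in its image, the BFS construction gives $M'(N(S_v)) = S_v \setminus \{v\}$, so $|N(S_v)| = |S_v| - 1$ and $\D(S_v) = 1$.

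The main obstacle is verifying that $S_v$ is inclusion minimal. Suppose $T \subsetneq S_v$ is independent with $\D(T) > 0$. The inverse bijection $w \mapsto M'^{-1}(w)$ sends each $w \in S_v \setminus \{v\}$ to an element of $N(w)$, so $M'^{-1}(T \setminus \{v\}) \subseteq N(T)$ has size $|T \setminus \{v\}|$. If $v \notin T$, this gives $|N(T)| \ge |T|$, contradicting $\D(T) > 0$. If $v \in T$, then $\D(T) > 0$ forces $|N(T)| = |T| - 1$ and hence $M'(N(T)) = T \setminus \{v\} \subseteq T$; but then $T$ contains $v$ and is closed under the defining rule of $S_v$, forcing $S_v \subseteq T$ by minimality of $S_v$---contradicting $T \subsetneq S_v$.
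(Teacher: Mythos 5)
The paper offers no proof of Theorem~\ref{2.6}: it is imported from \cite{2013_2} as background, so there is no in-paper argument to measure yours against. Taken on its own, your proof is correct and self-contained. The easy inclusion via supermodularity ($\D(S\cap\KER(G))\ge\D(S)>0$, hence $S\subseteq\KER(G)$ by minimality of $S$) is exactly right, and so is the harder direction: exposing a prescribed $v\in\KER(G)$ by an alternating-path exchange, taking $S_v$ to be the closure of $\{v\}$ under the rule $u\in N(S)\Rightarrow M'(u)\in S$, counting $|N(S_v)|=|S_v|-1$, and the two-case minimality argument all check out.

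One intermediate claim is not literally true, though it costs you nothing. A vertex of $K\setminus R$ can have a neighbor in $N(K)\cap R$, because a reachable $N(K)$-vertex can only be \emph{left} along its matching edge, so its other neighbors need not lie in $R$; hence $N(K\setminus R)=N(K)\setminus R$, and with it the additive identity $\D(K)=\D(K\cap R)+\D(K\setminus R)$, may fail. But the two facts you do establish correctly --- $N(K\cap R)=N(K)\cap R$ and the $M$-bijection between $N(K)\setminus R$ and $K\setminus R$ --- already yield $\D(K\cap R)=|K\cap R|-|N(K)\cap R|=|K|-|N(K)|=\DC(G)$, which is all that is needed to contradict Theorem~\ref{2.2}(i) when $K\setminus R\neq\emptyset$. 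So the argument stands once that one sentence is deleted or repaired.
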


For an independent set $X$ of $G$  a new graph $H_X$ is defined as follows. The vertex set $V(H_X) = X \cup N(X) \cup \{v, w\}$, where $v$ and $w$
are two new vertices not in $V(G)$ and the edge set
$E(H_X) = \{ xy \in E(G): x \in X, y \in N(X)\} \cup \{vw\}\cup \{vx : x \in N(X)\}$. Note that if $G$ is a connected graph with $|V(G)|>1$, then $H_X$ is a connected bipartite graph. Figure~\ref{fig1} gives an illustration of the construction.
Also observe that for all $Y\subset X$, $\D_{H_X}(Y)= \D_G(Y)$.
\begin{figure}[!ht]
	\[\begin{tikzpicture}
	\node at (1,1.5) {$G:$};
	\vertex (a) at (2,0) {};
	\vertex (b) at (3,0) {};
	\vertex (c) at (4,0) {};
	\vertex (d) at (5,0) {};
	
	\vertex (p) at (2.5,3) {};
	\vertex (q) at (3.5,3) {};
	\vertex (r) at (4.5,3) {};
	
	\vertex (x) at (2.5,5) {};
	\vertex (y) at (3.5,5) {};
	\vertex (z) at (4.5,5) {};
	
	\draw [gray,thick] (3.5,0) ellipse (2cm and .4cm);
	\node at ($(3.5,0)+(-90:2 and .8)$) {$X$};
	
	\draw [gray,thick] (3.5,3) ellipse (1.5cm and .3cm);
	\node at ($(3.5,3)+(90:2 and .6)$) {$N(X)$};
	
	\path [line width=1pt]
	
	(a) edge (p)
	(b) edge (p)
	(c) edge (q)
	(d) edge (q)
	(d) edge (r)
	(q) edge (r)
	(p) edge (x)
	(p) edge (y)
	(x) edge (y)
	(y) edge (z)
	(z) edge (r)
	;  
	
	\node at (7,1.5) {$H_X:$};
	\vertex (a1) at (8,0) {};
	\vertex (b1) at (9,0) {};
	\vertex (c1) at (10,0) {};
	\vertex (d1) at (11,0) {};
	
	\vertex (p1) at (8.5,3) {};
	\vertex (q1) at (9.5,3) {};
	\vertex (r1) at (10.5,3) {};
	
	\redv (v1) at (13,0) [label=right:$v$] {};
	\redv (w1) at (13,3) [label=right:$w$] {};
	
	\draw [gray,thick] (9.5,0) ellipse (2cm and .4cm);
	\node at ($(9.5,0)+(-90:2 and .8)$) {$X$};
	
	\draw [gray,thick] (9.5,3) ellipse (1.5cm and .3cm);
	\node at ($(9.5,3)+(90:2 and .6)$) {$N(X)$};
	
	\path [line width=1pt]
	
	(a1) edge (p1)
	(b1) edge (p1)
	(c1) edge (q1)
	(d1) edge (q1)
	(d1) edge (r1)
	
	(v1) edge (p1)
	(v1) edge (q1)
	(v1) edge (r1)
	(v1) edge (w1)
	;   
	\end{tikzpicture}\]
	\caption{ Construction of $H_X$ for a given independent set $X$ of $G$.}
	\label{fig1}
\end{figure}
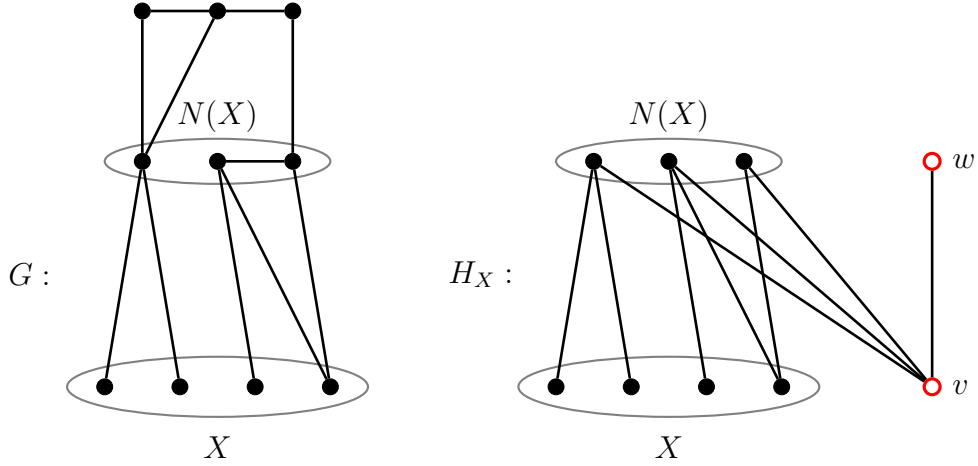

\begin{THEOREM} \label{2.7}
	If $X$ is an independent set of $G$ with $\D(X) > 0$ such that for all  $Y \subsetneq X$, $\D(Y) < \D(X)$, then $\KER(H_X) = X$.
\end{THEOREM}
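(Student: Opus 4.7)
The plan is to show that $X$ is a critical (independent) set of $H_X$ with $\D_{H_X}(X) = \DC(H_X) = \D_G(X)$, and then deduce $\KER(H_X) = X$ via the lattice property of critical sets (Theorem~\ref{2.3}(ii)).

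The first and most substantive step is to translate the minimality hypothesis on $X$ into a Hall-type condition in $G$: for every nonempty $T \subset N(X)$, $|N_G(T) \cap X| \geq |T|$. Suppose for contradiction that some $T$ violates this, and set $X' = X \setminus N_G(T)$. Since each $y \in T$ has a neighbor in $X$, $X'$ is a proper subset of $X$; by construction no vertex of $T$ has a neighbor in $X'$, and since $X$ is independent we have $N_G(X') \subset N(X) \setminus T$. A direct count then gives
\[
\D_G(X') \geq \D_G(X) + (|T| - |N_G(T) \cap X|) > \D_G(X),
\]
contradicting the hypothesis that every proper subset of $X$ has strictly smaller $\D_G$.

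Next I would show $\D_{H_X}(S) \leq \D_G(X)$ for every $S \subset V(H_X)$. Writing $S = S_X \cup S_N \cup (S \cap \{v,w\})$ with $S_X = S \cap X$ and $S_N = S \cap N(X)$, and using that $N_{H_X}(v) = N(X) \cup \{w\}$, $N_{H_X}(w) = \{v\}$, and $H_X$-neighborhoods inside $X \cup N(X)$ coincide with $G$-neighborhoods, I would case-split on whether $v$ and $w$ lie in $S$ and directly compute $|N_{H_X}(S)|$. In every case, combining the Hall bound $|S_N| \leq |N_G(S_N) \cap X|$ with the hypothesis $\D_G(S_X) \leq \D_G(X)$ (strict unless $S_X = X$) yields $\D_{H_X}(S) \leq \D_G(X)$. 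Since equality is attained at $S = X$, we get $\DC(H_X) = \D_G(X)$ and $X$ is critical. For the last step, let $T$ be any critical set of $H_X$; by Theorem~\ref{2.3}(ii), $T \cap X$ is critical, but $\D_{H_X}(T \cap X) = \D_G(T \cap X) = \DC(H_X)$ forces $T \cap X = X$ by the strict-decrease hypothesis, so $X \subset T$. Combined with the fact that $X$ itself is critical, this gives $\KER(H_X) = X$.

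The main obstacle is the first step: converting the subset-minimality hypothesis on $X$ into a Hall-type inequality from $N(X)$ into $X$ in $G$. Once Hall is in place the case analysis in the second step is routine bookkeeping, and the final deduction is a clean application of the lattice property of critical sets.
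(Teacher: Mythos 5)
Your proposal is correct, but it takes a genuinely different route from the paper. The paper exploits the gadget structure of $H_X$ directly: the vertex $v$ forces $X \cup \{v\}$ to be the unique maximal independent set containing $v$, and the minimality hypothesis on $X$ forces $X \cup \{w\}$ to be the unique largest independent set containing $w$; hence $\CORE(H_X) = X$, and the bipartite identity $\KER = \CORE$ (Theorem~\ref{2.4}) finishes the proof. You instead bypass $\CORE$ entirely: you convert the strict-decrease hypothesis into the Hall condition $|T| \le |N_G(T) \cap X|$ for $T \subset N(X)$ (in effect re-deriving, from minimality, the matching from $N(X)$ into $X$ that Theorem~\ref{2.1} would give for critical sets), use it to bound $\D_{H_X}(S) \le \D_G(X)$ over all $S \subset V(H_X)$ so that $X$ is critical with $\DC(H_X) = \D_G(X)$, and then apply the lattice property of critical sets (Theorem~\ref{2.3}(ii)) together with the strict-decrease hypothesis to force $X \subset T$ for every critical set $T$. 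I checked the case analysis in your second step (the four cases according to whether $v$ and $w$ lie in $S$) and it does close as claimed, though note that when $v \in S$ the relevant estimate is $\D_{H_X}(S) \le |S_X| - |N(X)| \le \D_G(X)$ rather than one passing through $\D_G(S_X)$. The trade-off: the paper's argument is shorter and cleaner but leans on the nontrivial bipartite theorem $\KER = \CORE$; yours is self-contained modulo supermodularity, works directly at the level of critical sets, and makes explicit the Hall-type content of the minimality hypothesis, at the cost of more bookkeeping.
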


\begin{proof}
	Note that the only maximal independent set that 
	contains $v$ is $X\cup \{v\}$ and its size is $|X| +1$. 
	Also if a maximal independent set does not contain $v$, then it contains $w$. Now we show that size of a 
	largest independent set that contains $w$ is also $|X|+1$.
	
	Let $I = S \cup T \cup \{w\}$ be a largest independent set where $S \subset X$ and $T \subset N(X)$.
	\begin{align*}
	& & &|N(S) \cup T|& & = & &|N(X)| \ \mbox{(since $I$ is a maximal independent set, $N(S)\cup T=N(X)$)}& \\
	&\Rightarrow& &|N(S)| +|T|& & = & &|N(X)| \ \mbox{(since $N(S)$ and $T$ are disjoint)}& \\
	&\Rightarrow& &|N(S)|+ |X-S|& &\le & &|N(X)| \ \mbox{(otherwise $|X\cup \{w\}|> |I|$)}& \\
	&\Rightarrow& &|X|-|N(X)|& &\le & & |S| -N(S)| & \\
	& \Rightarrow& & \D(X) & &\le & & \D(S)  \mbox{, a contradiction if $S\subsetneq X$.}&
	\end{align*}

	Thus $I= X\cup \{w\}$. 
	Hence $X \cup \{v\}$ and $X \cup \{w\}$ are the only two largest independent sets of $H_X$. 
	Thus $\CORE (H_X)= (X\cup \{v\}) \cap (X \cup \{w\})= X$.
	Since $H_X$ is bipartite, Theorem~\ref{2.4} implies  
	$\KER (H_X) = \CORE (H_X) = X$.
\end{proof}

\begin{COROLLARY}
	\label{2.8}
	If $X$ is an independent set of $G$ with $\D(X) > 0$ and for all $Y \subsetneq X$, $\D(Y) < \D(X)$, then $X$ can be expressed as a union of inclusion minimal independent sets with positive difference.
\end{COROLLARY}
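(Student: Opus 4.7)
My plan is to derive the corollary directly from Theorem~\ref{2.7} by applying Theorem~\ref{2.6} to the auxiliary graph $H_X$. Since $\D(X) > 0$, the set $X$ is nonempty, so Theorem~\ref{2.7} gives $\KER(H_X) = X \ne \emptyset$, which lets me invoke Theorem~\ref{2.6} on $H_X$ to conclude
\[
X \;=\; \KER(H_X) \;=\; \bigcup \bigl\{ S : S \text{ is an inclusion minimal independent set of } H_X \text{ with } \D_{H_X}(S) > 0 \bigr\}.
\]

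The remaining task is to transfer each such $S$ from $H_X$ back to $G$. First, because the displayed union equals $X$, every set $S$ appearing on the right is contained in $X$, and in particular does not contain the added vertices $v$ or $w$. Second, from the construction of $H_X$, the only edges added beyond those inherited from $G$ are incident to $v$ or $w$; consequently, for any $S \subset X$ we have $N_{H_X}(S) = N_G(S)$ and therefore $\D_{H_X}(S) = \D_G(S)$, a fact already noted after the definition of $H_X$. The same observation applies to every $S' \subsetneq S$ (which is still contained in $X$), so the condition ``no proper subset of $S$ has positive difference'' is the same in $H_X$ and in $G$. Moreover, $S$ is independent in $G$ since it is a subset of the $G$-independent set $X$.

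Combining these, each $S$ in the union is an inclusion minimal independent set of $G$ with $\D_G(S) > 0$, and their union is $X$; this is exactly the statement of the corollary. There is no real obstacle here beyond the bookkeeping of verifying that independence, positive difference, and inclusion minimality all translate faithfully between $H_X$ and $G$ for subsets of $X$; all the real work has already been done in establishing Theorem~\ref{2.7}.
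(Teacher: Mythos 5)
Your proposal is correct and follows exactly the paper's own route: apply Theorem~\ref{2.7} to get $\KER(H_X)=X$, invoke Theorem~\ref{2.6} on $H_X$, and observe that the inclusion minimal sets so obtained lie inside $X$ and hence (since $\D_{H_X}$ and $\D_G$ agree on subsets of $X$) are inclusion minimal independent sets of $G$ with positive difference. The only difference is that you spell out the transfer bookkeeping more explicitly than the paper does, which is fine.
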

\begin{proof}
	By Theorem~\ref{2.6}, $X=\KER (H_X)$ is the union of all inclusion minimal independent sets of $H_X$ with positive difference. 
	As inclusion minimal independent sets of $H_X$ with positive difference are contained in $X$, they are inclusion minimal independent sets of $G$ with positive difference. Hence the result follows.
\end{proof}

\begin{COROLLARY}
	If $X$ is an independent set of $G$ with $\D(X) > 0$ such that for all $Y \subsetneq X$, $\D(Y) < \D(X)$, then $X \subset 
	\KER(G)$.
\end{COROLLARY}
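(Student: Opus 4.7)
The proof should be almost immediate from the two preceding results, so the plan is mostly to chain them together carefully.

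First, I would invoke Corollary~\ref{2.8} to write $X$ as a union $X = \bigcup_i S_i$ where each $S_i$ is an inclusion minimal independent set of $G$ with $\D(S_i) > 0$. This is the main structural input and uses the hypothesis on $X$ in full strength (it relied on Theorem~\ref{2.7} identifying $\KER(H_X) = X$).

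Next, I need to be in a position to apply Theorem~\ref{2.6}, which requires $\KER(G) \neq \emptyset$. Since $\D(X) > 0$, we have $\DC(G) \geq \D(X) > 0$. By Theorem~\ref{2.2}(i), $\KER(G)$ is itself a critical independent set, hence $|\KER(G)| - |N(\KER(G))| = \DC(G) > 0$, which forces $\KER(G) \neq \emptyset$. Thus Theorem~\ref{2.6} applies and tells us that $\KER(G)$ equals the union of all inclusion minimal independent sets of $G$ with positive difference.

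Finally I would conclude: each $S_i$ is, by construction, an inclusion minimal independent set of $G$ with $\D(S_i) > 0$, so $S_i \subset \KER(G)$ for every $i$, and therefore
\[
X = \bigcup_i S_i \;\subset\; \KER(G),
\]
as required. The only step that is not purely bookkeeping is the application of Corollary~\ref{2.8}, and that has already been done; I do not expect any genuine obstacle here, just the need to check that $\KER(G)$ is nonempty before quoting Theorem~\ref{2.6}.
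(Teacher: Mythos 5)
Your proof is correct and follows essentially the same route as the paper: apply Corollary~\ref{2.8} to decompose $X$ into inclusion minimal sets with positive difference, then absorb each piece into $\KER(G)$ via Theorem~\ref{2.6}. The only difference is that you explicitly verify the hypothesis $\KER(G)\neq\emptyset$ of Theorem~\ref{2.6} (correctly, via $\DC(G)\ge \D(X)>0$ and criticality of $\KER(G)$), a point the paper's proof leaves implicit.
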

\begin{proof} From Corollary~\ref{2.8} it follows that $X$ is contained in the union of inclusion minimal independent sets of $G$ with positive difference.
	From Theorem~\ref{2.6} it follows that $X$ is contained in $\KER (G)$.
\end{proof}

\begin{NOTE}\label{2.10}
	\normalfont 
	A set $S$ with $\D(S) > 0$ such that no proper subset of $S$ has positive difference must be an independent set. If not, we have $S \cap N(S) \ne \emptyset$. Let $X = S \cap N(S)$. Since $N(S - X) \cap S = \emptyset$, we have $N(S - X) \subset N(S) - S = N(S) - X$. Thus 
	\begin{align*}
	\D(S - X) &= |S - X| - |N(S - X)|\\
	&\ge |S - X| - |N(S) - X|\\
	&= |S| - |X| - (|N(S)| - |X|)\\
	&= \D(S) > 0,
	\end{align*}
	a contradiction. Thus we may drop the word ``\emph{independent}" from the phrase ``$S$ is an inclusion minimal independent set with $\D(S)>0$".
\end{NOTE}

\begin{COROLLARY}\label{2.11}
	If $S$ is an inclusion minimal set of a graph $G$ with $\D_G(S)>0$, then $\D_G(S) = 1$.
\end{COROLLARY}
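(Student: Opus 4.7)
The plan is a short direct argument comparing $S$ to $S$ minus a single vertex. By Note~\ref{2.10} we know $S$ is independent, but this is not needed here; only the assumption that every proper subset of $S$ has non-positive difference will be used.

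First I would observe that $S$ is nonempty: since $\D_G(S) > 0$ and $\D_G(\emptyset) = 0$, we must have $|S| \geq 1$. Pick any vertex $v \in S$ and set $S' = S - \{v\}$. Because $S$ is inclusion minimal with positive difference, $\D_G(S') \leq 0$.

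Next I would compute the change in difference when passing from $S'$ to $S$. Since $N(S') \subseteq N(S)$, write $k_v := |N(S)| - |N(S')| \geq 0$. Then
\begin{align*}
\D_G(S) - \D_G(S') &= (|S| - |N(S)|) - (|S| - 1 - |N(S')|)\\
&= 1 - k_v\\
&\leq 1.
\end{align*}
Combining with $\D_G(S') \leq 0$ gives $\D_G(S) \leq 1$. Together with the hypothesis $\D_G(S) > 0$ and integrality, this forces $\D_G(S) = 1$.

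There is no real obstacle here; the only subtlety is checking that the case $|S| = 1$ is covered, which it is, since then $S' = \emptyset$ and $\D_G(S') = 0 \leq 0$. I would not invoke Theorem~\ref{2.7} or Corollary~\ref{2.8} for this statement, as the one-line removal argument is strictly simpler, but one could alternatively deduce it by noting that in $H_S$ the set $S$ is the $\KER$ and hence critical of value $\D_G(S)$, and then arguing via the bipartite KE structure of $H_S$ that this value must equal $1$; the direct route above is cleaner.
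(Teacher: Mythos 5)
Your proof is correct, and it is genuinely more elementary than the one in the paper. You only use the definition of inclusion minimality (every proper subset has non-positive difference) together with the monotonicity $N(S-\{v\})\subseteq N(S)$, which gives $\D_G(S)-\D_G(S-\{v\})=1-k_v\le 1$ and hence $0<\D_G(S)\le 1$. The paper instead routes the argument through its new machinery: Note~\ref{2.10} to reduce to independent $S$, Theorem~\ref{2.7} to identify $S=\KER(H_S)$ in the auxiliary bipartite graph $H_S$, and then Theorem~\ref{2.5}(i) to conclude that $\D_G(S-\{v\})=\D_G(S)-1$ \emph{exactly} for every $v\in S$, from which $\D_G(S)\le 1$ follows as in your final step. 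What the paper's longer route buys is this extra equality (equivalently, $N(S-\{v\})=N(S)$ for every $v\in S$, i.e.\ your $k_v$ is always $0$), a structural fact that is stronger than what the corollary states; what your route buys is a self-contained two-line argument that needs neither independence of $S$ nor the $H_X$ construction. Both are valid; yours is arguably the ``right'' proof of the bare statement, while the paper's serves partly to illustrate the use of Theorem~\ref{2.7}.
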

\begin{proof} This result was first shown in~\cite{2013_2}. From Note~\ref{2.10} it follows that $S$ is an independent set. Since $S$ is an independent set with $\D_G(S)>0$ and no proper subset of $S$ has positive difference, from Theorem~\ref{2.7} we have $\KER (H_S) = S$. Hence by Theorem~\ref{2.5}(i), for all $v \in S$, 
	\begin{align*}
	\D_{H_S} (S - \{v\}) &= \DC(H_S) - 1\\
	&= \D_{H_S} (S)-1 \quad \mbox{(by Theorem~\ref{2.2}(i), $S = \KER (H_S)$ is critical in $H_S$)}\\
	&= \D_G (S)-1.
	\end{align*}
	Thus for all $v \in S$, $\D_G(S - \{v\}) =\D_G(S) - 1$. As $S$ is an inclusion minimal set with $\D_G(S) > 0$, we have 
	$\D_G(S - \{v\}) \le 0$, which implies 
	$\D_G(S) \le 1$. Hence $\D_G(S) = 1$.
\end{proof}

Corollary~\ref{2.8}  can be made stronger and it can also be proved directly.

\begin{THEOREM} \label{2.12}
	Let $X \in \IND (G)$ with $\D(X) = k > 0$. If for all $Y \subsetneq X$, $\D(Y) < k$, then $X$ can be expressed as a union of 
	$k$ distinct inclusion minimal sets with positive difference.
\end{THEOREM}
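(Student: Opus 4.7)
The plan is to produce the $k$ inclusion minimal subsets of $X$ directly via alternating paths in a suitable matching on the bipartite subgraph $B$ between $X$ and $N(X)$. First I would verify Hall's condition on the $N(X)$-side: for $T \subseteq N(X)$, setting $S := X \setminus (N(T) \cap X)$ forces $N(S) \subseteq N(X) \setminus T$ and so $\D(S) \ge k + |T| - |N(T) \cap X|$; combined with the bound $\D(S) \le k$ (immediate from the hypothesis applied to $S \subseteq X$), this gives $|N(T) \cap X| \ge |T|$. Hence a matching $M$ from $N(X)$ into $X$ saturating $N(X)$ exists and leaves exactly $k$ vertices $u_1, \ldots, u_k$ of $X$ unmatched.

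For each unmatched $u_i$, let $S_i \subseteq X$ be the vertices reachable from $u_i$ by $M$-alternating paths (starting with a non-matching edge), and $T_i \subseteq N(X)$ the analogous set. Standard alternating-path bookkeeping yields $N(S_i) = T_i$ and a perfect matching (the restriction of $M$) between $S_i \setminus \{u_i\}$ and $T_i$, so $\D(S_i) = 1$. The $S_i$'s are pairwise distinct because an alternating path entering any vertex of $X$ other than its start must end with a matching edge, ruling out $u_j \in S_i$ for $j \ne i$. Setting $S^* := \bigcup_i S_i$ and $T^* := \bigcup_i T_i$, the identity $N(S^*) = T^*$ together with the perfect matching between $S^* \setminus \{u_1, \ldots, u_k\}$ and $T^*$ gives $\D(S^*) = k$; the hypothesis then forces $S^* = X$.

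The hard part will be showing that each $S_i$ is itself inclusion minimal, not merely a set with $\D = 1$ that contains one. For $Y \subsetneq S_i$ with $u_i \notin Y$ the matching injects $Y$ into $N(Y)$ and so $\D(Y) \le 0$. For $Y = \{u_i\} \cup P$ with $P \subsetneq S_i \setminus \{u_i\}$ a direct count bounds $\D(Y) \le 1$, with equality only when $N(u_i) \subseteq M(P)$ and $N(P) = M(P)$. I would then run an induction along the alternating BFS levels from $u_i$: $L_1 = N(u_i) \subseteq M(P)$ forces $L_2 = M^{-1}(L_1) \subseteq P$; then $L_3 \subseteq N(L_2) \subseteq N(P) = M(P)$ gives $L_4 \subseteq P$; and so on, eventually exhausting $L_2 \cup L_4 \cup \cdots = S_i \setminus \{u_i\}$ inside $P$. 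This contradicts $P \subsetneq S_i \setminus \{u_i\}$, so $\D(Y) \le 0$ and $S_i$ is inclusion minimal, which completes the construction.
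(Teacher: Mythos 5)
Your proof is correct, but it takes a genuinely different route from the paper's. The paper argues via a greedy extraction plus supermodularity: it first observes that $\D(X-S)\ge k-|S|$ for $S\subset X$, then successively chooses inclusion minimal sets $S_{i+1}\subset X-\{s_1,\dots,s_i\}$ with positive difference (minimality and distinctness are free by construction, and $\D(S_i)=1$ is quoted from Corollary~2.11), and finally applies supermodularity of $\D$ (Theorem~2.3) repeatedly to get $\D(S_1\cup\cdots\cup S_k)\ge k$, which forces $\bigcup_i S_i=X$ by the hypothesis on proper subsets. You instead construct the $S_i$ explicitly: your verification of Hall's condition on the $N(X)$ side is correct (taking $S=X\setminus(N(T)\cap X)$ and using $\D(S)\le k$), so a matching saturating $N(X)$ exists with exactly $k$ exposed vertices of $X$, and the alternating-reachability sets from these exposed vertices give the decomposition. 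Your alternating-path bookkeeping checks out: $N(S_i)=T_i$ and the restricted perfect matching give $\D(S_i)=1$, distinctness follows since $u_j$ is unmatched and hence unreachable from $u_i$ for $j\ne i$, the count $|S^*|=|T^*|+k$ forces $S^*=X$, and the BFS-level induction correctly rules out $\D(Y)=1$ for $Y\subsetneq S_i$ containing $u_i$. What your approach buys is an explicit, essentially algorithmic decomposition that re-derives $\D(S_i)=1$ and the inclusion minimality of each piece from first principles, independent of the machinery of Theorem~2.7 and Corollary~2.11; the cost is substantially more bookkeeping than the paper's short supermodularity argument.
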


\begin{proof}
	We note that if $ S \subset X$  then 
	\begin{align*}
	\D(X - S) & =  |X-S| - |N(X-S)| \\
	& =  |X| -|S| -   |N(X-S)| \\
	&\ge  |X|- |N(X)| -|S|\\
	&=  \D(X)- |S|\\
	&=  k - |S|.
	\end{align*} 
	
	We note that if $\D(S)>0$, then $S$ contains an inclusion minimal set $T$ with $\D(T)>0$. As the poset $\left( \{ X \subset S : \D(X) > 0 \}, \subset \right)$ is nonempty and finite, it admits a minimal element which can be chosen as $T$.
	
	Since $\D(X) > 0$, $X$ contains an inclusion 
	minimal set with positive difference. Choose an inclusion minimal set $S_1\subset X$ with positive difference and a vertex $s_1\in S_1$.  Suppose  $(s_1, S_1), \ldots, (s_i,S_i)$, where $i<k$  have been selected.
	As $\D(X-\{s_1,\ldots, s_i\})\ge k-i>0$, $X-\{s_1, \ldots, s_i\}$  contains an inclusion minimal set with positive difference. 
	Choose an inclusion minimal set $S_{i+1} \subset X-\{s_1,\ldots, s_i\}$ with positive difference and a vertex $s_{i+1} \in S_{i+1}$.  
	This process is continued  till the  pairs  $(s_1,S_1), \ldots , (s_k,S_k)$ are obtained. Note that for integers $i$ and $j$ with $1 \le i < j \le k$, we have $s_i \notin S_{j}$. Also from Corollary~\ref{2.11} it follows that for $i \in \{ 1, \ldots, k\}$, $\D (S_i) = 1$.
	
	Next we show that  $\D(S_1 \cup \cdots \cup S_k) \ge k$ by repeated application of 
	supermodularity of $\D$ (Theorem~\ref{2.3}). 
	
	By supermodularity of $\D$, for $i\in \{1,\ldots , k-1\}$, 
	\begin{align}\label{2.12e1}
	&\D(S_k\cup S_{k-1}\cup \cdots \cup S_i ) + \D\left( (S_k \cup S_{k-1}\cup \cdots \cup S_{i+1})\cap  S_i \right) \ge \nonumber \\
	&\D(S_k \cup S_{k-1}\cup \cdots \cup S_{i+1}) + \D(S_{i}).
	\end{align}
	
	Now $s_{k-1} \in S_{k-1} - S_k$ implies $S_k \cap S_{k-1} \subsetneq S_{k-1}$. Since $S_{k-1}$ is inclusion minimal with $\D(S_{k-1}) > 0$ and $S_k \cap S_{k-1} \subsetneq S_{k-1}$, we have 
	$\D(S_k \cap S_{k-1}) \le 0$.
	Also $\D(S_{k-1}) = \D(S_k) = 1$.  Thus equation~\eqref{2.12e1} for $ i=k-1$ implies  $\D(S_k \cup S_{k-1}) \ge 2$.

	In general, for $i\in \{1,\ldots , k-1\}$, $s_i\in S_i -\cup _{j=i+1}^kS_j$. Hence 
	$(S_k \cup S_{k-1}\cup \cdots \cup S_{i+1})\cap  S_i \subsetneq S_i$,
	implying $\D((S_k \cup S_{k-1}\cup \cdots \cup S_{i+1})\cap  S_i )\le 0$. Therefore, if $\D(S_k \cup S_{k-1}\cup \cdots \cup S_{i+1} ) \ge k-i$, then equation~\eqref{2.12e1} implies 
	$\D(S_k \cup S_{k-1}\cup \cdots \cup S_{i} ) \ge k-i+1$.
	In particular $\D (S_k \cup S_{k-1}\cup \cdots \cup S_{1} ) \ge k$. This leads to a contradiction if
	$\cup_{i=1}^k S_i \subsetneq X$. Thus $X= \cup_{i=1}^k S_i $.
\end{proof}

\begin{COROLLARY}
	The number of inclusion minimal independent sets of $G$ with positive difference is greater than or equal to $\DC(G)$.
\end{COROLLARY}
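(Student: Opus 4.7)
The plan is to deduce this immediately from Theorem~\ref{2.12} by choosing $X = \KER(G)$. If $\DC(G) \le 0$ the bound is vacuous, so I would assume $k := \DC(G) > 0$; in that case $\KER(G) \neq \emptyset$ since it is the intersection of the critical sets of $G$ and, having $\D \ge 1$ for at least one of them, is critical itself.

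To invoke Theorem~\ref{2.12} I would verify its hypotheses for $X = \KER(G)$. By Theorem~\ref{2.2}(i), $\KER(G)$ is a critical independent set, so $\D(X) = \DC(G) = k > 0$. Moreover any proper subset $Y \subsetneq \KER(G)$ fails to be critical: if it were, then by the definition of $\KER(G)$ as the intersection of all critical sets we would have $\KER(G) \subseteq Y$, contradicting $Y \subsetneq \KER(G)$. Consequently $\D(Y) < \DC(G) = k$ for every such $Y$, and Theorem~\ref{2.12} applies to $X = \KER(G)$. It supplies $k$ distinct inclusion-minimal sets $S_1, \ldots, S_k$ with positive difference whose union is $\KER(G)$. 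By Note~\ref{2.10} each $S_i$ is automatically independent, so these are $\DC(G)$ distinct inclusion-minimal independent sets of $G$ with positive difference, which is exactly the desired bound.

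There is essentially no obstacle here beyond selecting the correct $X$: the hard combinatorial work, namely the inductive construction of the pairs $(s_i, S_i)$ and the repeated use of supermodularity, has already been absorbed into Theorem~\ref{2.12}. The only conceptual point is to recognize that $\KER(G)$ is the canonical "smallest" critical set — the one on which every proper subset strictly drops in difference — and this is precisely the content of Theorem~\ref{2.2}(i).
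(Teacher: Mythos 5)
Your proposal is correct and follows exactly the paper's own route: the paper likewise sets $X = \KER(G)$ in Theorem~\ref{2.12}, relying on the fact that $\KER(G)$ is the unique minimal critical independent set so that every proper subset has strictly smaller difference. Your additional verifications (handling $\DC(G)=0$, checking nonemptiness of $\KER(G)$, and invoking Note~\ref{2.10} for independence) are sound elaborations of details the paper leaves implicit.
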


\begin{proof} 
	As $\KER(G)$ is the unique minimal critical independent set of $G$, setting $X=\KER (G)$ in 
	Theorem~\ref{2.12} this corollary and Conjecture~\ref{1.1} is proved.
\end{proof}

Converse of Corollary~\ref{2.8} also holds true.
\begin{THEOREM}
	If $X \subset V(G)$ can be expressed as a union of inclusion minimal sets with positive difference, then 
	\begin{enumerate}[label=(\roman*)]
		\item $X \in \IND(G)$;
		\item $\D (X) > 0$ ;
		\item for all $Y \subsetneq X$, $\D (Y) < \D(X)$.
	\end{enumerate}
	
\end{THEOREM}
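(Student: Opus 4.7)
Write $X = S_1 \cup \cdots \cup S_m$, where each $S_i$ is an inclusion minimal set with $\D(S_i) > 0$. By Note~\ref{2.10} each $S_i$ is independent, and by Corollary~\ref{2.11} each $\D(S_i) = 1$; in particular $\DC(G) \ge 1$ and $\KER(G) \ne \emptyset$.

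For part (i) the plan is to show $X \subset \KER(G)$ and then invoke the fact that $\KER(G)$ is independent. Each $S_i$ is an inclusion minimal independent set with positive difference, so $S_i \subset \KER(G)$ by Theorem~\ref{2.6}. Taking the union over $i$ yields $X \subset \KER(G)$, and since $\KER(G) \in \IND(G)$, $X$ is independent as well.

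The key tool for parts (ii) and (iii) is a single supermodularity lemma: for any $Y \subset V(G)$ and any inclusion minimal set $S$ with positive difference such that $S \not\subset Y$,
\[\D(Y \cup S) \ge \D(Y) + 1.\]
Indeed $Y \cap S \subsetneq S$ combined with the inclusion minimality of $S$ forces $\D(Y \cap S) \le 0$, and Theorem~\ref{2.3}(i) then gives $\D(Y \cup S) + \D(Y \cap S) \ge \D(Y) + \D(S) = \D(Y) + 1$.

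Granting this lemma, both remaining parts follow by an iterative build-up of $X$ from its constituent $S_i$'s. For (iii), given $Y \subsetneq X$, pick any $x \in X \setminus Y$; it lies in some $S_{i_1}$, which must therefore satisfy $S_{i_1} \not\subset Y$, so $\D(Y \cup S_{i_1}) \ge \D(Y) + 1$. If the enlarged set is still a proper subset of $X$, the same argument produces some $S_{i_2}$ strictly increasing $\D$ again, and so on. Each step strictly enlarges the current set (kept sandwiched between $Y$ and $X$), so after finitely many strict $\D$-increases the current set equals $X$, giving $\D(X) > \D(Y)$. For (ii), running the same iteration from $Y = \emptyset$, where $\D(\emptyset) = 0$, reaches $X$ with $\D(X) \ge 1 > 0$. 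The only genuine obstacle in the whole argument is isolating the supermodularity lemma above; once it is in hand, the proofs of all three parts are essentially bookkeeping.
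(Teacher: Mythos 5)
Your proof is correct and follows essentially the same route as the paper: part (i) via Theorem~\ref{2.6} and the independence of $\KER(G)$, and parts (ii)--(iii) by a supermodularity chain that adds the $S_i$'s one at a time, using the fact that a proper subset of an inclusion minimal set with positive difference has non-positive difference. The only (harmless) difference is organizational: you greedily add only those $S_i$ not yet contained in the current set, gaining $+1$ at every step, whereas the paper fixes one $S_1\not\subset Y$ for the single strict inequality and absorbs the remaining $S_i$'s in a fixed order with a weak monotonicity estimate; both are sound.
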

\begin{proof}
	Let $X = \cup_{i=1}^k S_i$ where $S_i$ is an inclusion minimal set with $\D (S_i) > 0$ for all $i \in \{1, \dots, k\}$. 
	Since $X$ is contained in the union of inclusion minimal sets with positive difference, $X \subset \KER(G)$ and thus $X \in \IND(G)$. This proves (i).
	
	First note that if $k=1$, then $X=S_1$. In this case (ii) and (iii) hold, so assume $k \ge 2$. If $Y \subsetneq X$, then at least one of the sets $S_1, \ldots , S_k$ is not contained in $Y$, say $S_1$. 
	Observe that for any $Z \subset X$ and $i \in \{1, \dots, k-1\}$, we have $\left(Z \cup S_1 \cup \cdots \cup S_{i}\right)\cap  S_{i+1}  \subset S_{i+1}$. This implies $\D\left((Z \cup S_1 \cup \cdots \cup S_{i})\cap  S_{i+1} \right) \le \D(S_{i+1})$ for all $i \in \{1, \dots, k-1\}$. Now by supermodularity of $\D$, for $i \in \{1, \dots, k-1\}$,
	\begin{align}
	\D(Z \cup S_1\cup \cdots \cup S_{i+1} )  & \ge 
	\D(Z \cup S_1 \cup\cdots \cup S_{i}) + \D(S_{i+1}) - \D\left((Z \cup S_1 \cup \cdots \cup S_{i})\cap  S_{i+1} \right) \nonumber \\ 
	& \ge \D(Z \cup S_1 \cup \cdots \cup S_{i}) \label{2.14e2}
	\end{align}
	By repeated application of equation~\eqref{2.14e2}, we get 
	\begin{align}\label{2.14e3}
	\D(X) = \D(Z \cup S_1 \cup \cdots \cup S_{k}) \ge \D(Z \cup S_1)
	\end{align}
	Setting $Z = \emptyset$ in equation~\eqref{2.14e3}, we get $\D(X) \ge \D(S_1)>0.$ This proves (ii).\\
	Setting $Z = Y$ in equation~\eqref{2.14e3}, we get 
	\begin{align}\label{2.14e4}
	\D(X) \ge \D(Y \cup S_1)
	\end{align}
	By supermodularity of $\D$, 
	\begin{align}\label{2.14e5}
	\D(Y \cup S_1) + \D(Y \cap S_1) \ge \D(Y)+\D(S_1)
	\end{align}
	As $S_1 \not\subset Y$, we have $Y \cap S_1 \subsetneq S_1$ and thus $\D(Y \cap S_1) < \D(S_1)$. Hence from  equation~\eqref{2.14e4} and equation~\eqref{2.14e5} we get 
	$$\D(X) \ge \D(Y \cup S_1) \ge \D(Y)+ \D (S_1) - \D(Y \cap S_1) > \D(Y).$$ 
	This proves (iii).
\end{proof}

We state one more result on criticality of independent sets which will be used is Section~4. This is a converse of Theorem~\ref{2.1}.
\begin{THEOREM} \label{2.15}
	Let $X$ be an independent set of a graph $G$ containing $\KER (G)$. If there is a matching from $N(X)$ into $X$, then $X$ is critical. 
\end{THEOREM}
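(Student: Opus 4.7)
The plan is to prove $\D(X) = \DC(G)$ directly by a counting argument on the given matching $M$ from $N(X)$ into $X$, exploiting the fact that $\KER(G)$ is already a critical set.

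Let me write $K := \KER(G)$, so by Theorem~\ref{2.2}(i) $K$ is critical, i.e.\ $\D(K) = \DC(G)$. First I would note the simple containment $N(K) \subset N(X)$, which holds because $K \subset X$ implies that any neighbor of a vertex in $K$ is automatically a neighbor of a vertex in $X$. The upper bound $\D(X) \le \DC(G)$ is free from the definition of $\DC(G)$, so the whole task reduces to establishing the matching lower bound $\D(X) \ge \DC(G)$, equivalently
\[
|X \setminus K| \;\ge\; |N(X) \setminus N(K)|.
\]

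The key observation is that the matching $M$ from $N(X)$ into $X$ cannot send ``too many'' vertices into $K$. Indeed, partition the matched endpoints in $X$ according to whether they lie in $K$ or in $X \setminus K$: let $P \subset N(X)$ be the vertices that $M$ matches into $K$, and let $Q = N(X) \setminus P$ be those matched into $X \setminus K$. Since every vertex of $P$ is joined by an edge of $M$ to some vertex of $K$, we have $P \subset N(K)$, and so $|P| \le |N(K)|$. Because $M$ saturates all of $N(X)$, we get
\[
|Q| \;=\; |N(X)| - |P| \;\ge\; |N(X)| - |N(K)|.
\]
The vertices matched by $M$ into $X \setminus K$ are all distinct, hence
\[
|X \setminus K| \;\ge\; |Q| \;\ge\; |N(X)| - |N(K)|,
\]
which rearranges to $|X| - |N(X)| \ge |K| - |N(K)|$, i.e.\ $\D(X) \ge \D(K) = \DC(G)$. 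Combined with $\D(X) \le \DC(G)$, this gives $\D(X) = \DC(G)$, so $X$ is critical.

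There is no real obstacle; the only ``aha'' step is recognizing that a vertex matched by $M$ to $K$ must have been in $N(K)$ to begin with, which caps the number of matching edges landing in $K$ at $|N(K)|$ and forces the remaining matching edges to consume vertices of $X \setminus K$. The hypothesis $\KER(G) \subset X$ is used precisely to guarantee $N(K) \subset N(X)$ and to supply a critical reference set whose difference we want to match.
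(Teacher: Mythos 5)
Your proof is correct and follows essentially the same route as the paper's: both compare $X$ against the critical set $\KER(G)$ and use the matching from $N(X)$ into $X$ to show $|X - \KER(G)| \ge |N(X) - N(\KER(G))|$, hence $\D(X) \ge \DC(G)$. The only cosmetic difference is that the paper applies Hall's condition to the set $B = N(X) - N(\KER(G))$ (whose matched partners must avoid $\KER(G)$), whereas you bound the number of matching edges that can land inside $\KER(G)$; the counting is the same.
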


\begin{proof}
	Let $A = X - \KER (G)$ and 
	$B = N(X) - N(\KER (G))$. 
	Since there is a matching from $N(X)$ into $X$ and there are no edges from $B$ to $\KER (G)$, Hall's
	Marriage Theorem~\cite{diestel, lovaszplummer} implies that $$|B| \le |N(B) \cap X| \le |X - \KER (G)| = |A|.$$
	Thus 
	\begin{align*}
	\D(X) &= |X| - |N(X)| \\    
	&= (|\KER (G)| + |A|) - (|N(\KER (G))| + |B|)\\
	&= \DC(G) + |A| - |B| \\
	&\ge \DC(G).
	\end{align*}
	Hence $\D(X) = \DC(G)$.
\end{proof} 

\begin{COROLLARY}
	$\CORE (G)$ is critical if and only if there is a matching from $N(\CORE (G))$ into $\CORE (G)$. 
\end{COROLLARY}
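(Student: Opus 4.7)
The plan is to prove the two directions separately, each as a one-line invocation of an earlier result.

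For the forward direction, I would assume that $\CORE(G)$ is critical. By definition, $\CORE(G)$ is the intersection of all maximum independent sets of $G$, so it is itself an independent set. Hence $\CORE(G)$ is a critical independent set, and Theorem~\ref{2.1} then supplies a matching from $N(\CORE(G))$ into $\CORE(G)$.

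For the reverse direction, I would invoke Theorem~\ref{2.15} with $X = \CORE(G)$. To do so I need to check its two hypotheses: that $X$ is independent (again immediate from the definition of $\CORE(G)$), and that $X \supset \KER(G)$. The latter is exactly Theorem~\ref{2.2}(ii). With a matching from $N(\CORE(G))$ into $\CORE(G)$ given by hypothesis, Theorem~\ref{2.15} yields $\D(\CORE(G)) = \DC(G)$, i.e., $\CORE(G)$ is critical.

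Since both directions are essentially immediate applications of previously stated theorems, there is no substantive obstacle here; the only point to be careful about is noting that $\CORE(G)$ is independent and that $\KER(G) \subset \CORE(G)$, so that Theorem~\ref{2.15} actually applies. The corollary is best viewed as an illustration of how Theorem~\ref{2.15} tightens Theorem~\ref{2.1} into an equivalence when $X = \CORE(G)$.
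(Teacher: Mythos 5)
Your proposal is correct and matches the paper's proof, which likewise cites Theorem~\ref{2.1} for the forward direction and Theorem~\ref{2.15} together with Theorem~\ref{2.2}(ii) for the converse. The paper simply states ``Follows from Theorem~\ref{2.1}, Theorem~\ref{2.2} and Theorem~\ref{2.15}''; you have spelled out the same argument with the hypotheses checked explicitly.
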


\begin{proof}
	Follows from Theorem~\ref{2.1}, Theorem~\ref{2.2} and Theorem~\ref{2.15}.
\end{proof} 
\section{A Ker-Diadem Inequality}

It was conjectured in \cite{2015arXiv150600255J} that the sum of sizes of $\KER$ and $\DIADEM$ of a graph is at most twice
the independence number of the graph. Short proved this inequality in \cite{short} 
using structural results by Larson in \cite{larson1}. Here a short and direct proof for this ``$\KER$-$\DIADEM$ inequality"  is presented.

\begin{LEMMA} \label{3.1}
	If $X$ and $Y$ are two critical independent sets of a graph $G$ then $|N (X) \cap Y| = |N (Y) \cap X|$.
\end{LEMMA}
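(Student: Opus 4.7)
The plan is to reformulate both sides of the identity as sizes of neighborhoods in the bipartite subgraph of $G$ between $X \setminus Y$ and $Y \setminus X$, and then to apply Theorem~\ref{2.1} to each of $X$ and $Y$ in turn to obtain matching inequalities in both directions.

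First I would set $A = X \setminus Y$, $B = Y \setminus X$ and $C = X \cap Y$. Since $X$ is independent, $N(X) \cap X = \emptyset$, which forces $N(X) \cap Y \subset B$; since $Y$ is independent and $B, C \subset Y$, no edges of $G$ run between $C$ and $B$, so in fact $N(X) \cap Y = N(A) \cap B$. A symmetric computation yields $N(Y) \cap X = N(B) \cap A$, so the lemma reduces to the identity $|N(A) \cap B| = |N(B) \cap A|$ inside the bipartite subgraph of $G$ on parts $A$ and $B$.

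Next I would invoke Theorem~\ref{2.1} on the critical independent set $X$ to get a matching $M_X$ from $N(X)$ into $X$. Any edge of $M_X$ with an end in $N(X) \cap B$ terminates in $X = A \cup C$, but it cannot land in $C$, since that would place an edge of $G$ inside the independent set $Y$. Hence $M_X$ restricts to an injection from $N(A) \cap B$ into $A$ whose image lies in $N(B) \cap A$, giving $|N(A) \cap B| \le |N(B) \cap A|$. Swapping the roles of $X$ and $Y$ and applying Theorem~\ref{2.1} to $Y$ produces the reverse inequality, forcing equality. I do not anticipate a real obstacle: the only subtle point is checking that $M_X$, once restricted to $B$, cannot use endpoints in $C$, and this is immediate from the independence of $Y$. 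Note that supermodularity alone (Theorem~\ref{2.3}) gives $\D(X) + \D(Y) = \D(X \cup Y) + \D(X \cap Y) = 2\DC(G)$ and thus only controls sums of cardinalities; the matching consequence of criticality is what separates $|N(A) \cap B|$ from $|N(B) \cap A|$ individually.
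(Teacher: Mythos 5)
Your proof is correct and is essentially the paper's argument: both apply Theorem~\ref{2.1} to the critical set $X$ to inject $N(X)\cap Y$ into $N(Y)\cap X$ via the matching from $N(X)$ into $X$, then symmetrize. Your decomposition into $A$, $B$, $C$ and the explicit check that matched partners avoid $X\cap Y$ is just a more hands-on packaging of the paper's one-line Hall's-condition step $|S|\le|N(S)\cap X|\le|N(Y)\cap X|$ for $S=N(X)\cap Y$.
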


\begin{proof}
	Let $S = N(X) \cap Y$. As $S \subset Y, N(S) \subset N(Y)$, and hence $N(S) \cap X \subset N(Y) \cap X$. Using Hall's
	Marriage Theorem it can be verified that if $X$ is a critical independent set then there is a matching from 
	$N(X)$ into $X$ (Theorem~\ref{2.1}).
	As $S \subset N(X)$, Hall's Marriage Theorem implies $|S| \le |N(S) \cap X| \le |N(Y) \cap X|$,
	i.e., $|N(X) \cap Y| \le |N(Y) \cap X|$.
	By a similar argument, we have $|N(Y) \cap X| \le |N(X) \cap Y|$.
\end{proof}

\begin{THEOREM}\label{3.2}
	If $X$ is a maximal critical independent set of a graph $G$ then $\DIADEM (G) \subset X \cup N(X) - N(\KER (G))$.
\end{THEOREM}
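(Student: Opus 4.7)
I would split the containment $\DIADEM(G) \subset X \cup N(X) \setminus N(\KER(G))$ into two parts and prove them separately: $\DIADEM(G) \cap N(\KER(G)) = \emptyset$ and $\DIADEM(G) \subset X \cup N(X)$. The first is easy: for any critical independent set $Y$, Theorem~\ref{2.3}(ii) makes $\KER(G) \cap Y$ a critical independent set, and by Theorem~\ref{2.2}(i) this intersection must equal $\KER(G)$; hence $\KER(G) \subset Y$, and since $Y$ is independent, $Y \cap N(\KER(G)) = \emptyset$. Taking the union over all critical independent $Y$ handles the $N(\KER(G))$ part of the inclusion.

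For the harder direction, I fix any critical independent $Y$ and partition its vertices as $Y = A \cup C \cup D$, where $A = X \cap Y$, $C = Y \cap N(X)$, $D = Y \setminus (X \cup N(X))$; I also set $B = X \setminus Y$. The goal is $D = \emptyset$. Theorem~\ref{2.1} supplies matchings $M_X \colon N(X) \to X$ and $M_Y \colon N(Y) \to Y$. Two preliminary facts I would record: $M_X$ sends $C$ injectively into $B$ (the image cannot enter $A \subset Y$, as that would create an edge inside $Y$); and for every $b \in M_X(C) \subset N(Y)$ the partner $M_Y(b)$ lies in $C \cup D$ (placing $M_Y(b)$ in $A \subset X$ would create an edge inside $X$).

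The main step is to build an injection $\phi \colon N(D) \setminus N(X) \to D$ via an $(M_Y, M_X)$-alternating walk. For $p \in N(D) \setminus N(X)$, the partner $M_Y(p)$ lies in $C \cup D$ (otherwise $M_Y(p) \in A \subset X$ would force $p \in N(X)$). I set $\phi(p) = M_Y(p)$ if $M_Y(p) \in D$; else I put $c_1 = M_Y(p)$, $b_1 = M_X(c_1)$, examine $M_Y(b_1) \in C \cup D$, and iterate, yielding a sequence $c_1, b_1, c_2, b_2, \ldots$. Injectivity of the two matchings forces the $c_i$'s to be distinct, so by finiteness of $C$ the walk must arrive in $D$ within at most $|C|$ steps; reversing the walk via $M_X^{-1}$ and $M_Y^{-1}$ recovers $p$ uniquely and proves $\phi$ injective. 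The resulting inequality $|N(D) \setminus N(X)| \le |D|$ gives $\D(X \cup D) = \DC(G) + |D| - |N(D) \setminus N(X)| \ge \DC(G)$, so the independent set $X \cup D$ is critical. Maximality of $X$ then forces $D = \emptyset$, yielding $Y \subset X \cup N(X)$. The main obstacle is the injectivity of $\phi$: one must verify that backtracking the walk cannot end at $p$ itself by ``wrapping around'', which uses the fact that the $b_i$'s all lie in $X$ while $p$ does not.
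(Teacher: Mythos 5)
Your proposal is correct, and the overall skeleton matches the paper's: both handle the $N(\KER(G))$ part by noting $\KER(G)\subset Y$ for every critical independent set $Y$, and both derive the main containment by producing, from a critical independent set $Y$ meeting the complement of $X\cup N(X)$, a critical independent set strictly containing $X$ --- indeed your $X\cup D$ is literally the paper's set $X\cup A-(N(X)\cap A)$. The difference lies entirely in how criticality of that larger set is certified. The paper first proves Lemma~\ref{3.1} (that $|N(X)\cap A|=|N(A)\cap X|$ for critical independent sets, via Hall's theorem and Theorem~\ref{2.1}) and then combines it with supermodularity (Theorem~\ref{2.3}) to get $\D(X\cup D)\ge \D(X\cup A)=\DC(G)$. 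You instead bypass Lemma~\ref{3.1} and supermodularity altogether, building an explicit injection $N(D)\setminus N(X)\to D$ by alternating between the two matchings $M_X$ and $M_Y$ supplied by Theorem~\ref{2.1}; the termination and injectivity arguments you sketch (distinctness of the $c_i$, and the observation that the $b_i$ lie in $X$ while the source $p$ does not, which pins down where backtracking stops) are sound. The paper's route is shorter and reuses Lemma~\ref{3.1} as a standalone tool; yours is more elementary and self-contained, at the cost of a more delicate combinatorial verification. Your treatment of the $N(\KER(G))$ part, via Theorems~\ref{2.2}(i) and \ref{2.3}(ii), also fills in a step the paper only asserts.
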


\begin{proof}
	First we show that $\DIADEM (G) \subset X \cup N(X)$. Toward a contradiction suppose $x \in \DIADEM (G) - (X \cup N(X))$. 
	As $x \in \DIADEM (G)$, there exists a critical independent set $A$ containing $x$. 
	Let $Y = X \cup A - (N(X) \cap A)$. Observe that  $Y$ is an independent set and $X \subsetneq Y$. This implies
	$N(Y) \subset N(X \cup A) - (X \cap N(A))$ (in fact equality holds). Hence
	\begin{align*}
	\D(Y) &= |Y| - |N(Y)| \\
	& \ge  |X \cup A - (N(X) \cap A)| - |N(X \cup A) - (X \cap N(A))| \\
	& =   |X \cup A| - |N(X) \cap A| - |N(X \cup A)| + |X \cap N(A)| \\
	&= |X \cup A| - |N(X \cup A)| \mbox{ (using Lemma~\ref{3.1}) }\\
	&= \D(X \cup A).
	\end{align*}
	By supermodularity  of $\D$ and criticality of $X$ and $A$, it follows that $X \cup A$ is also critical (Theorem~\ref{2.3}). Thus $\D(Y) = \DC(G)$
	and this 
	contradicts that $X$ is a maximal critical independent set. This shows that 
	$\DIADEM (G) \subset X \cup N(X)$. 
	
	Now for any  critical independent set $I$,  
	$I \cap N(\KER (G)) = \emptyset$. Thus taking union over all such $I$, we get $\DIADEM (G) \cap N(\KER (G)) = \emptyset$. Hence $\DIADEM (G) \subset X \cup N(X) - N(\KER(G))$.
\end{proof}

\begin{COROLLARY}
	For every graph $G$, $|\DIADEM (G)| + |\KER (G)| \le 2\alpha (G)$.
\end{COROLLARY}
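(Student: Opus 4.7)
The plan is to chain Theorem~\ref{3.2} with the criticality identity $\D(\KER(G)) = \DC(G) = \D(X)$, and observe that the two resulting estimates cancel beautifully to yield $2|X|$, which is at most $2\alpha(G)$ since $X$ is independent.

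First I would fix a maximal critical independent set $X$ (which exists, since $\KER(G)$ is itself critical and any finite chain of critical independent sets terminates; also, by Theorem~\ref{2.2}(i), $\KER(G) \subset X$). By Theorem~\ref{3.2},
\[
\DIADEM(G) \subset (X \cup N(X)) \setminus N(\KER(G)).
\]
The next step is a disjointness bookkeeping: because $X$ is independent and $\KER(G) \subset X$, no vertex of $X$ is adjacent to a vertex of $\KER(G)$, so $N(\KER(G)) \cap X = \emptyset$. Also $N(\KER(G)) \subset N(X)$. Thus the right-hand side decomposes as the disjoint union of $X$ and $N(X) \setminus N(\KER(G))$, giving
\[
|\DIADEM(G)| \le |X| + |N(X)| - |N(\KER(G))|.
\]

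Now I would invoke criticality. Both $\KER(G)$ and $X$ are critical, so $|X| - |N(X)| = \DC(G) = |\KER(G)| - |N(\KER(G))|$, i.e.,
\[
|\KER(G)| = |X| - |N(X)| + |N(\KER(G))|.
\]
Adding the two displayed estimates, the $|N(X)|$ and $|N(\KER(G))|$ terms cancel and we get $|\DIADEM(G)| + |\KER(G)| \le 2|X| \le 2\alpha(G)$, where the final inequality uses that $X$ is an independent set.

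I do not expect any real obstacle: Theorem~\ref{3.2} does all of the structural heavy lifting, and the only subtlety is confirming that the three sets $X$, $N(X) \setminus N(\KER(G))$, and the ``removed'' set $N(\KER(G))$ sit inside $X \cup N(X)$ in the expected disjoint way; this is immediate from $\KER(G) \subset X$ together with the independence of $X$. Everything else is a one-line arithmetic cancellation between the size bound on $\DIADEM(G)$ and the critical-difference identity for $\KER(G)$.
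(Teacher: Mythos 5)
Your proof is correct and follows essentially the same route as the paper: fix a maximal critical independent set $X$, apply Theorem~\ref{3.2}, use independence of $X$ to split $|X\cup N(X)|$, and use criticality of both $X$ and $\KER(G)$ to make the $|N(X)|$ and $|N(\KER(G))|$ terms cancel, leaving $2|X|\le 2\alpha(G)$. The only difference is cosmetic bookkeeping in how the cancellation is arranged.
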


\begin{proof}
	Let $X$ be a maximal critical independent set of $G$. By Theorem~\ref{3.2}, 
	$\DIADEM (G) \subset X \cup N(X) - N(\KER (G))$.
	Therefore, 
	\begin{align*}
	|\DIADEM (G)| &\le |X \cup N(X)| - |N(\KER(G))| \quad \mbox{ (as $N(\KER (G))  \subset N(X))$ }\\
	&= |X| + |N(X)| - |N(\KER(G))| \quad \mbox{ (as $X$ is independent) }\\
	&= |X| + (|X| - \DC(G)) - |N(\KER(G))| \quad \mbox{ (as $X$ is critical) }\\
	&= 2|X| - (\DC(G) + |N(\KER(G))|) \\
	&= 2|X| - |\KER(G)| \\
	&\le 2\alpha(G) - |\KER(G)|.
	\end{align*}
\end{proof}

\section{Characterization of Unicyclic non-KE Graphs}

For any graph $G$, $\alpha(G)+\mu(G) \le |V(G)|$ and for bipartite graphs equality holds \cite{dem,ster}. It turned out that many interesting 
properties can be proved for a graph $G$ which satisfy $\alpha(G)+\mu(G) = |V(G)|$. This motivated the study of graphs for which this equality holds.
A graph is called  K\"{o}nig-Egerv\'{a}ry (KE)  if $ \alpha(G)+\mu(G) = |V(G)|$. KE graphs have been studied extensively~\cite{2015arXiv150600255J}.
This motivated researchers to consider graphs that are ``close" to KE graphs. One of these classes is unicyclic graphs. For any 
unicyclic graph $G$, $|V(G)|-1 \le \alpha(G)+\mu(G) \le |V(G)|$.

In this section we characterize the unicyclic non-KE graphs and prove some properties. These properties also 
lead to a proof of the Conjecture~\ref{1.3}.

We present a procedure to construct any connected non-KE graph $G$.
\begin{PROCEDURE}
	\label{4.1}
	A connected graph $G$ is constructed by the following Steps.
	\begin{enumerate}
		\item Construct an odd cycle, color its vertices blue.
		\item Go to Step (3) or Step (4) or  stop.
		\item  Attach a path of length two to any vertex:  Choose a vertex $u$ and then add two new vertices $u_1$, $u_2$ and 
		two edges $uu_1$, $u_1u_2$. Color $u_1$ red and $u_2$ black. Go to Step (2).
		\item Attach a black leaf to a red vertex: If there is no vertex colored red, go to Step (2). Else choose a red vertex $u$ and add a new vertex $u_1$ and an edge $uu_1$. 
		Color $u_1$ black and go to Step (2).
	\end{enumerate}
\end{PROCEDURE}

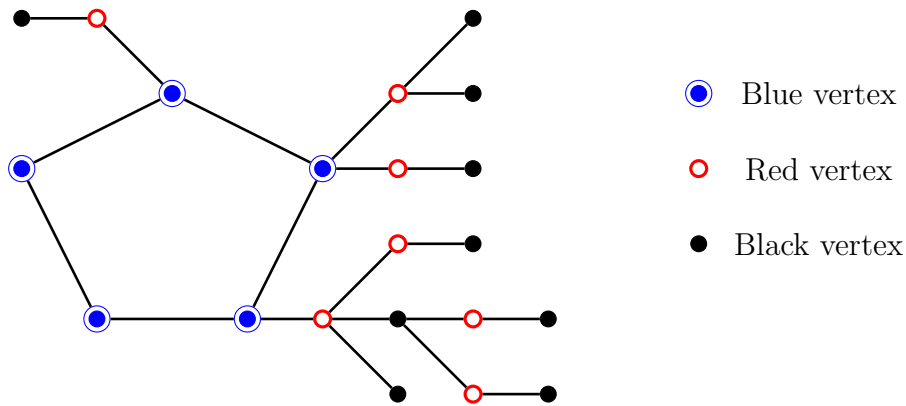
\begin{figure}[!ht]
	\[\begin{tikzpicture} 
	\bluv (a) at (2,0) {};
	\bluv (b) at (4,0) {};
	\bluv (c) at (5,2) {};
	\bluv (d) at (3,3) {};
	\bluv (e) at (1,2) {};
	\bluvin () at (2,0) {};
	\bluvin () at (4,0) {};
	\bluvin () at (5,2) {};
	\bluvin () at (3,3) {};
	\bluvin () at (1,2) {};
	
	\redv (r1) at (6,3) {};
	\vertex (b11) at (7,3) {};
	\vertex (b12) at (7,4) {};
	
	\redv (r2) at (5,0) {};
	\vertex (b21) at (6,0) {};
	\vertex (b22) at (6,-1) {};
	
	\redv (r3) at (6,1) {};
	\vertex (b31) at (7,1) {};
	
	\redv (r4) at (7,0) {};
	\vertex (b41) at (8,0) {};
	
	\redv (r5) at (6,2) {};
	\vertex (b51) at (7,2) {};
	
	\redv (r6) at (7,-1) {};
	\vertex (b61) at (8,-1) {};
	
	\redv (r7) at (2,4) {};
	\vertex (b71) at (1,4) {};
	
	\bluv () at (10,3) {};
	\node at ($(10,3)+(0:1.6)$) {Blue vertex};
	\bluvin () at (10,3) {};
	\redv () at (10,2) {};
	\node at ($(10,2)+(0:1.6)$) {Red vertex};
	\vertex () at (10,1) {};
	\node at ($(10,1)+(0:1.6)$) {Black vertex};

	\path [line width=1pt]
	
	(a) edge (b)
	(b) edge (c)
	(c) edge (d)
	(d) edge (e)
	(e) edge (a)
	
	(b) edge (r2)
	(c) edge (r1)
	(c) edge (r5)
	(d) edge (r7)
	
	(r1) edge (b11)
	(r1) edge (b12)
	(r2) edge (b21)
	(r2) edge (b22)
	(r3) edge (b31)
	(r4) edge (b41)
	(r5) edge (b51)
	(r6) edge (b61)
	(r7) edge (b71)
	(b21) edge (r4)
	(b21) edge (r6)
	(r2) edge (r3)
	
	;  
	\end{tikzpicture}\]
	\caption{ A graph constructed by Procedure~\ref{4.1}.}
	\label{fig2}
\end{figure}

Let $G$ be a graph constructed by Procedure~\ref{4.1}. Henceforth the unique cycle in $G$ will be denoted by $C$. 
$G-E(C)$ is a forest. Consider the components of this forest 
as rooted trees with the root being the blue vertex of $C$. The notions of \emph{parent}, \emph{child}, \emph{descendant} etc. 
are used with respect to these rooted trees. Note that leaves are always colored black. See 
Figure~\ref{fig2} for an example of such graph.

\begin{LEMMA} \label{4.2}
	If a graph $G$ constructed by Procedure~\ref{4.1} is unicyclic non-KE, then the graph $G'$ obtained from $G$ after applying Step~3 once is also unicyclic non-KE.
\end{LEMMA}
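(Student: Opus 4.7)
The plan is to handle unicyclicity and the non-KE property separately, and for the latter to show that Step~3 increases both $\alpha$ and $\mu$ by exactly one, so that the non-KE ``deficit'' $|V| - \alpha - \mu$ is preserved.

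First, Step~3 appends a pendant path $u\text{--}u_1\text{--}u_2$, which is a tree hanging off the existing vertex $u$; no new cycle is created, so $G'$ remains unicyclic with the same unique cycle $C$ as $G$. Also $|V(G')| = |V(G)| + 2$.

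Next I would show $\alpha(G') = \alpha(G) + 1$. For the lower bound, given a maximum independent set $I$ of $G$, observe that $u_2$ has only $u_1 \notin V(G)$ as a neighbor in $G'$, so $I \cup \{u_2\}$ is independent in $G'$. For the upper bound, let $I'$ be an independent set of $G'$. Then $I' \cap V(G)$ is independent in $G$, so $|I' \cap V(G)| \le \alpha(G)$, and since $u_1u_2 \in E(G')$, the set $I'$ contains at most one of $u_1, u_2$. Hence $|I'| \le \alpha(G) + 1$.

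Then I would show $\mu(G') = \mu(G) + 1$. For the lower bound, adjoin the edge $u_1u_2$ to any maximum matching of $G$. For the upper bound, let $M'$ be a matching of $G'$. The only edges of $G'$ not in $G$ are $uu_1$ and $u_1u_2$, and they share $u_1$; hence $M'$ contains at most one of them. Removing that edge (if any) from $M'$ yields a matching of $G$ of size $\ge |M'|-1$, so $|M'| \le \mu(G) + 1$.

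Combining these with the hypothesis that $G$ is non-KE, i.e.\ $\alpha(G) + \mu(G) \le |V(G)| - 1$, gives
\[
\alpha(G') + \mu(G') \;=\; \alpha(G) + \mu(G) + 2 \;\le\; |V(G)| - 1 + 2 \;=\; |V(G')| - 1 \;<\; |V(G')|,
\]
so $G'$ is non-KE. There is no real obstacle here; the only point requiring a little care is the upper bound for $\mu(G')$, where one must note that $uu_1$ and $u_1u_2$ are the only new edges and both meet at $u_1$, so a matching of $G'$ uses at most one of them.
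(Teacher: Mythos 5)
Your proof is correct and follows essentially the same route as the paper: both arguments show that Step~3 raises $\alpha$ and $\mu$ by exactly one each (lower bounds by adjoining $u_2$ to an independent set and $u_1u_2$ to a matching, upper bounds from the adjacency of $u_1,u_2$ and the shared endpoint of the two new edges), so the deficit $|V|-\alpha-\mu$ is preserved. Your write-up is just slightly more explicit about the upper bounds and about unicyclicity being preserved.
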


\begin{proof} 
	We may add the new edge $u_1u_2$ to any matching $M$ in $G$ and the vertex $u_2$ to any independent set $I$ of $G$ to get a new matching
	$M'$ in $G'$ and an independent set $I'$ of $G'$. This implies   $\mu(G')+\alpha(G')\ge \mu(G)+\alpha(G)+2$ 
	
	Since the added two extra vertices  are adjacent, $\alpha (G')\le \alpha (G)+1$. Also the two added edges have a vertex in common. 
	Thus $\mu (G')\le \mu (G)+1$. This yields $\mu(G')+\alpha(G')\le \mu(G)+\alpha(G)+2$. 
	Therefore,  $\mu(G')+\alpha(G') = |V(G')|-1$.
\end{proof}

\begin{THEOREM}
	\label{4.3}
	There exists a largest independent set that contains all the black vertices of a graph $G$ constructed by 
	Procedure~\ref{4.1}.
\end{THEOREM}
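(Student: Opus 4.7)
The plan is to induct on the number of applications of Steps~3 and~4 used to construct $G$, with the base case being the initial odd cycle, where the claim is vacuous because there are no black vertices. Two structural observations drive the argument: (i) every black vertex is attached in Step~3 or Step~4 as a leaf whose unique neighbor is a red vertex, so the set of all black vertices is independent; (ii) every red vertex is born in Step~3 together with a black child attached to it, so every red vertex has at least one black child throughout the remainder of the construction.

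For the inductive step, I would let $I$ be a maximum independent set of the previous graph $G$ containing all of its black vertices, and let $G'$ be obtained by one further application of Step~3 or Step~4. In the Step~3 case, where $u_1$ (red) and $u_2$ (black) are attached via edges $uu_1$ and $u_1u_2$, the vertex $u_2$'s only neighbor is $u_1 \notin V(G)$, so $I \cup \{u_2\}$ is independent in $G'$ and contains every black vertex of $G'$; the bound $\alpha(G') \le \alpha(G) + 1$, forced by the edge $u_1u_2$, then makes this set maximum. In the Step~4 case, where a black leaf $u_1$ is attached to a red vertex $u$, observation~(ii) guarantees $u$ has a black child $b \in I$, which forces $u \notin I$, so $I \cup \{u_1\}$ is again independent and maximum by the trivial bound $\alpha(G') \le \alpha(G) + 1$.

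The step I expect to be most delicate is Step~4, since a priori one might need to modify $I$ by removing $u$ in favor of $u_1$ and then argue that nothing of value is lost during the swap. Observation~(ii) sidesteps this entirely: a previously attached black child of $u$ is already in $I$ by the inductive hypothesis, which automatically forbids $u \in I$ and lets us extend $I$ without any swap. The same observation explains why no analogous issue arises in Step~3, where the red vertex $u_1$ is brand new and comes packaged with its own black child $u_2$ that we place directly into the independent set.
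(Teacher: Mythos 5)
Your proposal is correct and follows essentially the same route as the paper: induction along the construction sequence, with the key point in the Step~4 case being that the red parent already has a black child in the inductively chosen maximum independent set (since every red vertex is created in Step~3 together with a black child), so the new black leaf can simply be appended. The only cosmetic difference is that you run the induction forward over construction steps while the paper inducts on the number of black vertices and peels off the most recently added one; the counting bounds $\alpha(G')\le\alpha(G)+1$ you invoke are exactly those implicit in the paper's argument.
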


\begin{proof}
	Note that black vertices are added one at a time in Step~(3) or Step~(4) of Procedure~\ref{4.1}. 
	We use induction on the number of black  vertices to prove this theorem.\\
	\emph{Base Case:} If there are no black vertices the result is trivially true.\\
	\emph{Induction Hypothesis:} There exists a largest independent set that contains all the black vertices when the number of black vertices is $\ell-1$.\\
	\emph{Induction Step:} Number of black vertices is $\ell$. Look at the most recently added
	black vertex in $G$ by Step~(3) or Step~(4). Call it $w$ and its parent (which is a red vertex) $v$.\\
	\indent \emph {Case 1:} $w$ is added by Step (3).\\
	The graph $G' = G - v - w$ can be constructed by Procedure~\ref{4.1} and the number of black vertices in $G'$ is $\ell-1$. By induction hypothesis, 
	there exists a largest independent set $I'$ of $G'$ containing all the black vertices. Note that $I' \cup \{w\}$ is a largest independent set of $G$ and it contains all the black vertices of $G$.\\
	\indent \emph {Case 2:} $w$ is added by Step~(4).\\
	The graph $G' = G - w$ can be constructed by Procedure~\ref{4.1} and the number of black vertices in $G'$ is $\ell-1$. By induction hypothesis, 
	there exists a largest independent set $I'$ of $G'$ containing all the black vertices. 
	The vertex $v$, which is the parent $w$, has at least one black child other than $w$. 
	This implies $v\notin I'$. Hence $I' \cup \{w\}$ 
	is a largest independent of $G$ and it contains all the 
	black vertices of $G$.
\end{proof}

\begin{THEOREM}
	Every largest matching in a graph $G$ constructed by 
	Procedure~\ref{4.1} covers all the red vertices.
\end{THEOREM}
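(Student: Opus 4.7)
The plan is to argue by contradiction: assume that some largest matching $M$ of $G$ leaves a red vertex $v$ unmatched, then exhibit an $M$-augmenting path starting at $v$, contradicting the maximality of $M$ via Berge's theorem. The augmenting path will be built by descending the rooted tree (the tree-component of $G - E(C)$ containing $v$) from $v$, with the matching status of each encountered vertex dictating the next edge.

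Two structural facts, both immediate from Procedure~\ref{4.1}, drive the descent. First, every red vertex $r$ has a canonical \emph{original} black child $b_r$ in the rooted tree --- namely the $u_2$ introduced together with $r = u_1$ by the Step (3) that created $r$. Second, every black vertex $g$ appearing in the rooted tree has \emph{only} red neighbors: its parent is red (since $g$ arises either as the $u_2$ of some Step (3) or as the $u_1$ of some Step (4), and in both cases the edge joining $g$ to its parent runs to a red vertex), and any child of $g$ is also red (since further edges from $g$ can only be created by Step (3) applied at $g$, which attaches a red child).

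The path is built inductively. Start at $v$ (unmatched, red) and move to its original black child $b_0$ along $vb_0 \notin M$. If $b_0$ is unmatched in $M$, the path $v - b_0$ is already $M$-augmenting. Otherwise, since $vb_0 \notin M$ and $b_0$'s only non-parent neighbors are red, $b_0$ must be matched to some red child $r_1$. Continue: from $r_1$ (matched to its parent $b_0$) move to $r_1$'s original black child $b_1$ along $r_1 b_1 \notin M$; either $b_1$ is unmatched (augmenting path found) or $b_1$ is matched to a red child $r_2$, and we continue to $b_2$, and so on. Each step of the descent moves strictly deeper in the rooted tree.

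Because the tree is finite, the descent must eventually reach a black vertex that is a leaf: at such a $b_k$, its sole neighbor is its parent $r_k$, and since $r_k$ is matched to $b_{k-1}$ we have $r_k b_k \notin M$, forcing $b_k$ to be unmatched. The resulting walk
\[
v - b_0 - r_1 - b_1 - r_2 - \cdots - r_k - b_k
\]
is a path in $G$ (all vertices distinct, as depth strictly increases), alternates $\overline{M}, M, \overline{M}, M, \ldots, \overline{M}$, and has both endpoints unmatched; it is therefore an $M$-augmenting path, contradicting the maximality of $M$. The main point requiring care is the second structural fact --- it is what keeps the descent on a single ``red spine'' through the tree, so that a matched black vertex cannot escape back up to its parent but is forced to expose a red child for us to proceed through.
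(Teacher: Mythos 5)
Your proof is correct, but it takes a genuinely different route from the paper's. The paper proceeds by induction on the number of red vertices: it deletes the most recently added red vertex $v$ together with its black-leaf descendants $w_1,\ldots,w_k$, notes that the resulting graph $G'$ is again a Procedure~\ref{4.1} graph with one fewer red vertex, shows that $M\cap E(G')$ is a largest matching of $G'$, and then compares sizes to conclude that $M$ covers $v$. You instead give a direct augmenting-path argument via Berge's theorem, driven by the structural observation that every black vertex has only red neighbours (its parent is red because black vertices arise only as the $u_2$ of Step~(3) or the $u_1$ of Step~(4), and its children are red because new edges at a black vertex can only be created by Step~(3)); this observation is what confines the alternating walk to a strictly descending red--black spine, so it must terminate at an unmatched black vertex and produce an augmenting path from the hypothetically exposed red vertex. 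I checked the structural fact and the alternation pattern, and both hold; the only presentational quibble is that the descent need not reach a leaf before stopping --- it stops at the first unmatched black vertex, possibly internal --- but you handle that case explicitly at each step, and a black leaf is indeed automatically unmatched since its unique neighbour is already matched elsewhere, so nothing is lost. Your approach trades the paper's induction (and the bookkeeping $|M|-|M\cap E(G')|=1$) for an appeal to Berge's theorem and a self-contained, explicitly exhibited augmenting path; the paper's version has the advantage of reusing the same deletion-and-induction template that also proves Theorem~\ref{4.3} and Lemma~\ref{4.12}, keeping the section uniform.
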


\begin{proof}
	We prove this by induction on the number of red vertices.\\
	\emph{Base Case:} Result is true when the number of red vertices is zero.\\
	\emph{Induction Hypothesis:} Every largest matching covers all the red vertices when the number of 
	red vertices is $\ell-1$.\\
	\emph{Induction Step:} Number of red vertices is $\ell$. \\
	Let the most recently added red vertex be $v$. 
	The only descendants of $v$ are black leaves, $w_1, ... , w_k$, where $k \ge 1$. 
	Let $M$ be a largest matching in $G$. It may be verified that 
	$G' = G - \{ v, w_1, ... , w_k\}$ can be obtained by Procedure~\ref{4.1} (it can be realized as a subsequence of steps that produced $G$) and the number of red 
	vertices in $G'$ is $\ell-1$.
	
	Let $M' = M \cap E(G')$. Observe that  $|M'| \ge |M|-1$. 
	Now $M'$ is a largest matching in $G'$, otherwise there exists a matching $M''$ in $G'$ with $|M''| > |M'|$. 
	But $M'' \cup \{vw_1\}$ is a matching in $G$ larger than $M$, a contradiction. 
	By induction hypothesis, $M'$covers all the $\ell-1$ red vertices of $G'$. 
	Now $M'$ cannot be a largest matching of $G$ as $M' \cup \{vw_1\}$ is a larger matching 
	in $G$. Thus $|M| - |M'| = 1$ and hence $M$ covers all the red vertices in $G$.
\end{proof}

\begin{COROLLARY}\label{4.5} 
	If a graph $G$ constructed by Procedure~\ref{4.1} is unicyclic non-KE, then the graph $G'$ obtained from $G$ after applying Step~(4) once is also unicyclic non-KE.
\end{COROLLARY}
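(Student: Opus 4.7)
The plan is to derive $\alpha(G')+\mu(G')=|V(G')|-1$ from two separate equalities: $\alpha(G')=\alpha(G)+1$ and $\mu(G')=\mu(G)$. Combined with the hypothesis $\alpha(G)+\mu(G)=|V(G)|-1$, this yields $\alpha(G')+\mu(G')=|V(G)|=|V(G')|-1$, showing $G'$ is non-KE. Unicyclicity of $G'$ is immediate because Step~(4) attaches a pendant vertex, which cannot create a new cycle.

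For the independence number, I will invoke Theorem~\ref{4.3}: pick a largest independent set $I$ of $G$ containing every black vertex. The red vertex $u$ to which Step~(4) attaches the new black leaf $u_1$ was originally introduced by Step~(3), and hence has at least one black child $w$ in $G$ (the vertex named $u_2$ when $u$ was created). Thus $w\in I$ and, because $uw\in E(G)$, we have $u\notin I$. Since $u$ is the unique neighbour of $u_1$ in $G'$, the set $I\cup\{u_1\}$ is independent in $G'$, giving $\alpha(G')\ge\alpha(G)+1$. The reverse inequality $\alpha(G')\le\alpha(G)+1$ is trivial.

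For the matching number I will argue by contradiction using the theorem stated immediately before this corollary, which asserts that every largest matching in a graph built by Procedure~\ref{4.1} covers all red vertices. Suppose $\mu(G')>\mu(G)$ and let $M'$ be a maximum matching of $G'$. If $u_1$ is unsaturated by $M'$, then $M'\subseteq E(G)$, contradicting $|M'|>\mu(G)$. Otherwise $uu_1\in M'$, and $M:=M'\setminus\{uu_1\}$ is a matching in $G$ of size $\mu(G')-1\ge\mu(G)$, hence a largest matching of $G$; but $M$ fails to cover the red vertex $u$, contradicting the cited theorem applied to $G$.

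I do not anticipate any substantial obstacle. The one point worth flagging is the structural fact that every red vertex of $G$ has a black child in $G$, which is what makes the independence-number step go through; this is automatic from the description of Step~(3), so both the $\alpha$ and $\mu$ bookkeeping reduce to routine applications of the two previously established results.
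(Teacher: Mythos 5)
Your proposal is correct and follows essentially the same route as the paper: the paper likewise gets $\mu(G')=\mu(G)$ from the theorem that every largest matching covers all red vertices, and $\alpha(G')\le\alpha(G)+1$ (with equality via Theorem~\ref{4.3}), then concludes non-KE from $\alpha(G')+\mu(G')\le|V(G')|-1$. You merely spell out the contradiction argument for $\mu$ and the black-child observation that the paper leaves implicit.
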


\begin{proof}
	Let $G'$ be the graph obtained from $G$ after an application of Step (4) with a new black leaf $u_1$ attached to a red vertex $u$ of $G$. 
	Since $u$ is matched by 
	all largest matchings in $G$, $\mu(G') = \mu (G)$. Also by Theorem~\ref{4.3}, $\alpha(G')\le \alpha(G)+1$ (in fact equality holds here). 
	Thus $G'$ is unicyclic non-KE. 
	
\end{proof}

\begin{THEOREM}
	A graph obtained by Procedure~\ref{4.1} is a connected unicyclic non-KE graph.
\end{THEOREM}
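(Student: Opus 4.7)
The plan is to prove this by induction on the number $n$ of times Steps~3 and~4 are invoked during the construction of $G$ via Procedure~\ref{4.1}.

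For the base case $n=0$, only Step~1 has been performed, so $G$ is an odd cycle $C_{2k+1}$ for some $k\ge 1$. Such a cycle is trivially connected and has exactly one cycle. A straightforward computation gives $\alpha(C_{2k+1}) = k$ and $\mu(C_{2k+1}) = k$, so $\alpha(G) + \mu(G) = 2k < 2k+1 = |V(G)|$, confirming that $G$ is non-KE.

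For the inductive step, suppose every graph produced by $n$ applications of Steps~3 and/or~4 is connected, unicyclic, and non-KE. Let $G'$ be obtained by $n+1$ such applications, and write $G$ for the intermediate graph produced after the first $n$ steps; by hypothesis $G$ is connected, unicyclic, and non-KE. The final step producing $G'$ from $G$ is either Step~3 or Step~4, and in each case we only attach one or two new vertices to $G$ by edges emanating from a single existing vertex $u \in V(G)$. Consequently $G'$ remains connected, and the attached vertices together with their incident new edges form a tree glued to $G$ at $u$, so no second cycle can appear—hence $G'$ is still unicyclic. The non-KE property is inherited from $G$ by appealing to Lemma~\ref{4.2} in the case of Step~3 and to Corollary~\ref{4.5} in the case of Step~4.

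The genuinely delicate point—that $\alpha(H) + \mu(H) = |V(H)| - 1$ is preserved under each of Steps~3 and~4—has already been dispatched in Lemma~\ref{4.2} and Corollary~\ref{4.5}, where the interaction between the newly attached vertices and the existing maximum independent sets and maximum matchings is controlled. With those results in hand, the main obstacle for the present statement is merely bookkeeping: one needs to verify that connectivity and the unicyclic property are preserved at each step (which is immediate from the local nature of the attachment) and then chain the non-KE conclusions of Lemma~\ref{4.2} and Corollary~\ref{4.5} along the construction.
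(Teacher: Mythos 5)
Your proposal is correct and follows essentially the same route as the paper: the paper's proof is a one-line appeal to Lemma~\ref{4.2}, Corollary~\ref{4.5}, and the fact that an odd cycle is non-KE, which is exactly the induction you spell out. Your additional remarks on the preservation of connectivity and the unicyclic property make explicit what the paper leaves implicit, but introduce no new ideas.
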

\begin{proof} Follows from Lemma~\ref{4.2} and Corollary~\ref{4.5} and the fact that an odd cycle is a non-KE graph. 
\end{proof}

Now we shall show that every connected unicyclic non-KE graph $G$ can be obtained by Procedure~\ref{4.1}. The
notation $C$ is continued to denote the unique cycle in $G$. If $C$ is an even cycle, then $G$ is bipartite and thus KE. Hence $C$ must be an odd cycle. Consider the forest $F$ obtained from $G$ by deleting
the edges in the cycle $C$. Define the vertices in the cycle as the roots of the trees in this forest.  
Let $T_v$ be the component of the forest $F$ rooted at $v$. 
The root $v$ is not considered a leaf 
even if the degree of $v$ in $T_v$ is 1. $T_v$ is called nontrivial if it has more than one vertex.

\begin{LEMMA}\label{4.7}
	If $G$ is a unicyclic non-KE graph with the cycle $C$, then $G$ does not have a leaf attached to $C$.
\end{LEMMA}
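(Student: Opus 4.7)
The plan is to argue by contradiction: assume a leaf $\ell$ is attached to a vertex $v$ of $C$ and derive that $G$ must be KE, contradicting the hypothesis. The key observation is that once we delete both $\ell$ and $v$, we destroy the unique cycle $C$ and obtain a forest, which is bipartite and therefore KE.

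First I would record two standard facts for the vertices $\ell$ and $v$. Since $\ell$ is a leaf with unique neighbor $v$, a simple swapping argument shows that there is always a maximum matching of $G$ containing the edge $v\ell$; hence $\mu(G) = \mu(G - v - \ell) + 1$. Next, for any maximum independent set $I'$ of $G - v - \ell$, the set $I' \cup \{\ell\}$ is independent in $G$ (as $\ell$'s only neighbor $v$ has been removed), so $\alpha(G) \ge \alpha(G - v - \ell) + 1$.

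Now I would invoke the crucial observation that $G' := G - v - \ell$ is a forest: deleting $v$ from the unicyclic graph $G$ destroys the unique cycle $C$, and deleting $\ell$ only removes an isolated vertex after that. Since forests are bipartite, $G'$ is KE, giving
\begin{equation*}
\alpha(G') + \mu(G') = |V(G')| = |V(G)| - 2.
\end{equation*}
Combining this with the two inequalities above yields
\begin{equation*}
\alpha(G) + \mu(G) \ge \alpha(G') + \mu(G') + 2 = |V(G)|,
\end{equation*}
and since the reverse inequality $\alpha(G) + \mu(G) \le |V(G)|$ holds for every graph, we conclude $\alpha(G) + \mu(G) = |V(G)|$, i.e., $G$ is KE, contradicting the hypothesis.

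There is no serious obstacle here; the only place one must be mildly careful is the justification that a maximum matching can be chosen to contain the pendant edge $v\ell$ (a one-line exchange argument), and the observation that removing a vertex of the unique cycle turns a unicyclic graph into a forest. Both are elementary, so the proof is genuinely short.
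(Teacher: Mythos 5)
Your proposal is correct and follows essentially the same route as the paper: delete the leaf together with its neighbor on $C$, note the remainder is a forest and hence KE, and lift a maximum independent set and a maximum matching back by one each to force $\alpha(G)+\mu(G)=|V(G)|$. The only cosmetic difference is that you assert the equality $\mu(G)=\mu(G-v-\ell)+1$ via an exchange argument, whereas only the inequality $\mu(G)\ge\mu(G-v-\ell)+1$ (immediate from adjoining the pendant edge) is needed.
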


\begin{proof}
	Suppose to the contrary that there exists a leaf $v$ attached to a vertex $w$ of $C$.
	If $G' = G - v- w$, then $G'$ is a forest (hence KE) 
	and thus $\alpha(G') + \mu(G') = |V(G')| = |V(G)| - 2$.
	
	Now if $I$ is a largest independent set of $G'$, then $I \cup \{v\}$ is an independent set of $G$, and if $M$ is a largest matching in $G'$, 
	then $M \cup \{vw\}$ is a 
	matching in $G$. 
	Thus $\alpha(G) \ge |I \cup \{v\}| = |I| + 1 = \alpha(G') + 1$, 
	and $\mu(G) \ge |M \cup \{vw\}| = |M| + 1 = \mu(G') + 1$.
	Hence $\alpha(G) + \mu(G) \ge \alpha(G') + \mu(G') + 2 = |V(G)|$, implying $G$ is a KE graph, a contradiction.
\end{proof}

\begin{LEMMA}\label{4.8}
	Let $G$ be a connected unicyclic non-KE graph with the (unique) cycle $C$ and $F$ be the forest obtained from $G$ by deleting all the edges belonging to $C$. If $T_v$ is the component of $F$ rooted at $v \in E(C)$, then every nontrivial $T_v$ contains a non-root vertex $x$  with one of the following properties:
	\begin{enumerate}[label=(\roman*)]
		\item $x$ is the parent of more than one leaves.
		\item $x$ is the parent of only one leaf and the degree of $x$ is $2$.
	\end{enumerate}
\end{LEMMA}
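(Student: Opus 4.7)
The plan is to locate $x$ as the parent of a deepest leaf of $T_v$. Since $T_v$ is nontrivial and is a finite rooted tree, it contains at least one leaf other than its root $v$. Let $\ell$ be a leaf of $T_v$ at maximum depth (graph distance from $v$ in $T_v$), and set $x$ to be the parent of $\ell$ in $T_v$. I will verify (a) that $x$ is a non-root vertex and (b) that $x$ satisfies either (i) or (ii).

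For (a), I would invoke Lemma~\ref{4.7}. Any non-root vertex $u$ of $T_v$ lies outside $V(C)$ (since deleting $E(C)$ separates the cycle vertices into distinct components), so the degree of $u$ in $G$ equals the degree of $u$ in $T_v$. In particular, if some child $c$ of $v$ in $T_v$ were a leaf of $T_v$, then $c$ would have degree one in $G$ while being adjacent to the cycle vertex $v$, contradicting Lemma~\ref{4.7}. Hence every child of $v$ has depth-one descendants in $T_v$, so the deepest leaf $\ell$ has depth at least two and $x \neq v$.

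For (b), the maximality of the depth of $\ell$ forces every child of $x$ in $T_v$ to itself be a leaf: a non-leaf child of $x$ would have a descendant leaf strictly deeper than $\ell$. I then split into two cases. If $x$ has two or more children, all of them are leaves, so $x$ is the parent of more than one leaf, giving (i). Otherwise $x$ has the unique child $\ell$, hence degree exactly two in $T_v$ (one parent edge and one child edge); since $x$ is a non-root of $T_v$ it is not on $C$, so its degree in $G$ equals its degree in $T_v$, namely two, giving (ii).

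The only real obstacle is ensuring $x \neq v$, and this is exactly what Lemma~\ref{4.7} supplies; once it is ruled out that $v$ has leaf children in $T_v$, the "deepest leaf's parent" construction delivers a vertex meeting one of the two prescribed local configurations automatically.
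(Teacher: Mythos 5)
Your proof is correct, but it takes a genuinely different route from the paper's. The paper argues by contradiction with a degree-sum count: assuming no non-root vertex satisfies (i) or (ii), every leaf has a distinct parent of degree at least $3$, and summing degrees over $L$, $N(L)$, $P$ and the root forces $|E(T_v)| > 2|L| + |P|$, contradicting $|E(T_v)| = |V(T_v)| - 1 = 2|L| + |P|$. You instead give a direct extremal argument: take a deepest leaf $\ell$, let $x$ be its parent, observe that maximality of depth forces all children of $x$ to be leaves, and then the two cases (at least two children versus exactly one) yield properties (i) and (ii) respectively. Both proofs invoke Lemma~\ref{4.7} at exactly the same juncture, to exclude leaves hanging off the cycle --- in the paper this gives $v \notin N(L)$, in yours it gives $x \neq v$. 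Your version has the advantage of being constructive (it exhibits the vertex $x$ rather than deriving a contradiction from its absence) and of avoiding the bookkeeping with the partition $L \sqcup N(L) \sqcup P \sqcup \{v\}$; the paper's count is a touch more global but establishes nothing extra here. One small point worth keeping explicit, which you do handle: the degree in property (ii) is the degree in $T_v$ (equivalently in $G$, since a non-root vertex of $T_v$ lies off $C$ and all its incident edges survive the deletion of $E(C)$), and in a rooted tree a non-root vertex with a unique child has degree exactly $2$.
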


\begin{proof}
	Suppose $T_v$ does not contain any non-root vertex with property (i). We assert that $T_v$ must contain a vertex with property (ii). 
	Suppose not, then each leaf has a parent of degree more than 2 and the parent does not have another leaf as a child. 
	Let $L$  be the set of  leaves of $T_v$. From  Lemma~\ref{4.7} it follows that $v \notin N(L)$. 
	Let $P = V(T_v) - (L \cup N(L) \cup \{v\})$. 
	Clearly $V(T_v) = L \sqcup N(L) \sqcup P \sqcup \{v\}$ ($\sqcup$ denotes disjoint union of sets). 
	It follows from the assumption that $|N(L)| = |L|$ and for all $x \in N(L), \DEG(x) \ge 3$. 
	Also $\DEG(v) \ge 1$, for all $z \in P$, $\DEG(z) \ge 2$, and for all $y \in L$, $\DEG(y) = 1$. 
	Hence $2|E(T_v)| =$ sum of the vertex degrees in $T_v \ge$ $$|L| + 3|N(L)| + 2|P| + 1 = 4|L| + 2|P| + 1.$$ 
	Thus $|E(T_v)| > 2|L| + |P|$.
	But $|V(T_v)| = |L| + |N(L)| + |P| + 1$ implies $$|E(T_v)| = |L| + |N(L)| + |P| = 2|L| + |P|, \mbox{ a contradiction.}$$ 
\end{proof}

\begin{THEOREM}\label{4.9}
	Any connected unicyclic non-KE graph $G$ can be constructed by Procedure~\ref{4.1}. 
\end{THEOREM}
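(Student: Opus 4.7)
The plan is to induct on $|V(G)|$. For the base case, when $G$ equals its cycle $C$, the cycle must be odd (an even cycle is bipartite and hence KE), so $G$ is produced by Step~(1) alone.

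For the induction step, assume $G$ has at least one vertex outside $C$, so some tree $T_v$ is nontrivial. By Lemma~\ref{4.8}, $T_v$ contains a non-root vertex $x$ of type~(i) or type~(ii). In case~(ii), let $y$ be the unique (leaf) child of $x$ and let $u$ be its parent, and put $G' = G - x - y$. Then $G'$ is a connected unicyclic graph, and an argument in the spirit of Lemma~\ref{4.2} yields $\alpha(G) = \alpha(G')+1$ and $\mu(G) = \mu(G')+1$: a maximum independent set of $G$ can be chosen to contain $y$ and exclude $x$, and a maximum matching of $G$ can be adjusted to use the pendant edge $xy$. Hence $G'$ is non-KE, and by induction it is obtainable from Procedure~\ref{4.1}; one further application of Step~(3) at $u$ then recovers $G$.

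In case~(i), $x$ has two or more leaf children $y_1, y_2, \dots$; set $G' = G - y_1$. I claim $\alpha(G) = \alpha(G') + 1$ and $\mu(G) = \mu(G')$. The first identity holds because every maximum independent set of $G$ must contain all leaf children of $x$ (otherwise one could add a missing leaf, or, if $x$ is in the set, swap $x$ out for two of its leaf children), so deleting $y_1$ from such a set gives a maximum independent set of $G'$. The second follows from the swap that exchanges $xy_1$ for $xy_2$ in any maximum matching. Thus $G'$ is again a connected unicyclic non-KE graph, so by induction $G'$ arises from Procedure~\ref{4.1}, and I would finish by appending a single Step~(4) that attaches $y_1$ as a black leaf to $x$.

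The main obstacle is showing that in whatever construction of $G'$ the induction supplies, the vertex $x$ is colored red, since Step~(4) is only allowed at a red vertex. The key observation is that in any run of Procedure~\ref{4.1}, the parent of a black vertex is automatically red: black vertices are introduced only as $u_2$ in Step~(3) or as $u_1$ in Step~(4), and in both cases the vertex they are attached to is red. Since $y_2$ is a non-cycle leaf of $G'$, the construction must have introduced it as a black vertex, and therefore its parent $x$ is red. This coupling of the structural dichotomy of Lemma~\ref{4.8} with the coloring discipline of Procedure~\ref{4.1} is precisely what makes the induction close.
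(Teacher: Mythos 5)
Your proof is correct and follows essentially the same route as the paper: induction on $|V(G)|$ (the paper phrases it as a minimal counterexample), the dichotomy of Lemma~\ref{4.8}, deletion of a pendant leaf in case (i) or a pendant path in case (ii), the same $\alpha$/$\mu$ counting to show $G'$ remains non-KE, and reconstruction via Step~(4) or Step~(3). The one point you add that the paper leaves implicit is the verification that $x$ is colored red in $G'$ (via the remaining black leaf child $y_2$), which is needed for Step~(4) to be applicable; this is a worthwhile detail to make explicit.
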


\begin{proof}
	Let $G$ be a minimal connected unicyclic non-KE graph that cannot be constructed by Procedure~\ref{4.1}. By Lemma~\ref{4.8}, it follows that $G$ contains a nontrivial $T_v$ and a vertex $x$ of $T_v$ with one of the two properties 
	listed in the lemma. \\
	\indent \emph {Case 1:} $x$ satisfies property (i).\\
	Let  $x$ be the parent of the leaves $y_1, \ldots, y_k$, where $k \ge 2$. 
	Note that all the leaves $y_1, \ldots, y_k$ belong to any largest independent set of $G$. Let $G' = G - y_k$. 
	Note that $\alpha(G') = \alpha(G) - 1$ and  $\mu(G') \le \mu(G)$ (in fact equality holds). This implies 
	$$\alpha(G') + \mu(G') \le \alpha(G) - 1 + \mu(G) < |V(G)| - 1 = |V(G')|.$$ 
	Thus $G'$ is a connected unicyclic non-KE subgraph of $G$. Minimality of $G$ implies $G'$ can be constructed by Procedure~\ref{4.1}.
	But $G$ can be obtained from $G'$ by applying Step (4) and hence $G$ can also be constructed by Procedure~\ref{4.1}, a contradiction.
	\\
	\indent \emph {Case 2:} $x$ satisfies property (ii).\\ 
	Let the unique child of $x$ be the leaf $y$ and $G' = G - x- y$. 
	Note that if $I$ is a largest independent set of $G$, then either $x \in I$ or $y \in I$ (but not both). Also if $z$ is the parent of $x$ and $M$ is a largest matching in $G$, then either $zx \in M$ or $xy \in M$, but not both. Thus $\alpha(G') \le \alpha(G) - 1$ and $\mu(G') \le \mu(G) - 1$. This implies
	$$\alpha(G') + \mu(G') \le \alpha(G) - 1 + \mu(G) - 1 < |V(G)| - 2 = |V(G')|.$$ 
	Thus $G'$ is a connected unicyclic non-KE subgraph of $G$. Minimality of $G$ implies $G'$ can be constructed by Procedure~\ref{4.1}.
	But $G$ can be obtained from $G'$ by applying Step~(3) and hence $G$ can also be constructed by Procedure~\ref{4.1}, a contradiction.
\end{proof}

\begin{THEOREM}
	For any connected unicyclic non-KE graph $G$ the vertex coloring generated by Procedure~\ref{4.1} is independent of the 
	particular sequence of steps that results in $G$. 
\end{THEOREM}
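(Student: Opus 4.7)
The plan is to extract an intrinsic description of the coloring from the graph $G$ alone, which immediately implies uniqueness. Since the cycle $C$ is determined by $G$, the blue vertices (precisely $V(C)$) are determined. For the non-cycle vertices I would root the forest $F = G - E(C)$ at the cycle vertices and prove that every execution of Procedure~\ref{4.1} produces a coloring satisfying: (a) every leaf of $F$ is black; (b) every red vertex has at least one black child in $F$; and (c) every black non-leaf vertex of $F$ has all of its children red.

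Property (a) holds because any red vertex is born as $u_1$ in Step~(3) and already carries both edges $uu_1$ and $u_1u_2$ at the moment of creation, so it has degree at least $2$ in the final $G$ and cannot be a leaf. Property (b) holds because when $v = u_1$ is created in Step~(3), the simultaneously created partner $u_2$ becomes a (permanent) black child of $v$ in the rooted tree, and $u_2$'s color is never revised by later steps.

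The heart of the argument is (c). Let $v$ be black and let $c$ be any child of $v$ in $F$. Since $v$ already exists when $c$ is added, the edge $vc$ must appear at the very step that creates $c$. If $c$ is added by Step~(3), then $c$ plays the role of $u_1$ with $u = v$, and hence $c$ is red. If $c$ is added by Step~(4), then $v$ would have had to be the red anchor of that step, contradicting $v$ being black. Either way $c$ is red.

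Once (a)--(c) are established, the color of every non-cycle vertex is forced: computing bottom-up on $F$, a leaf is black by (a), and for any non-leaf $v$, the contrapositive of (b) combined with (c) says that $v$ is red iff at least one child of $v$ is black. Because $F$ and its rooting at cycle vertices are intrinsic to $G$, the resulting coloring does not depend on the particular sequence of steps. The only delicate point is (c), which relies crucially on the restriction that Step~(4) may only attach a black leaf at an already-red vertex.
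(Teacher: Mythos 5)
Your proof is correct, and it takes a genuinely different route from the one the paper sketches. The paper omits the proof but indicates an inductive argument by reduction: peel off a reducible configuration as in the proof of Theorem~\ref{4.9}, observe that the colors of the deleted vertices are forced, and recurse. You instead extract an \emph{intrinsic} characterization of the coloring from the rooted forest $F = G - E(C)$: blue is exactly $V(C)$; every leaf of $F$ is black; a red vertex always has a black child (its Step~(3) partner $u_2$); and a black non-leaf has only red children, since a child created by Step~(4) would force its parent to be red, and a child created by Step~(3) must be the $u_1$ of that step (it cannot be the $u_2$, whose parent $u_1$ is red). Together these force, bottom-up, that a non-cycle non-leaf vertex is red if and only if it has a black child, so the coloring is a function of $G$ alone. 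Your local verifications are sound -- in particular the key point that parent--child relations in $F$ are fixed once a vertex is created, since later steps only extend the trees downward. What your approach buys is an explicit closed-form description of the color classes $B$ and $R$ (computable in linear time by one pass up the forest), and it avoids having to re-establish that the reduced graph remains connected, unicyclic, non-KE, and constructible at each step of an induction; what the paper's sketch buys is economy, reusing Lemma~\ref{4.8} and the reduction machinery already in place for Theorem~\ref{4.9}.
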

\noindent Proof of this theorem is omitted. Reduction steps similar to the ones used in the proof of Theorem~\ref{4.9} 
may be used here as for each reduction step the choice of color(s) for the deleted vertex (vertices) is unique. It may be noted 
that though the 
coloring is unique the same graph $G$ can be generated by different sequence of steps. 
\bigskip\\
Let $G$ be a connected unicyclic non-KE graph with the odd cycle $C$. For the rest of this section we assume $C$ is of length $2m+1$. Also we assume the unique coloring of the vertices of $G$ induced by Procedure~\ref{4.1}. 
Define $B$ and $R$ to be the set of black 
and red vertices of $G$ respectively.

\begin{THEOREM}\label{4.11}
	For any connected unicyclic non-KE graph $G$, $\CORE (G) \subset B$ and $\alpha(G) = |B| + m$.
\end{THEOREM}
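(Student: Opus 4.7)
The plan is to first prove the formula $\alpha(G) = |B| + m$ by induction on the number of applications of Steps~3 and~4 in Procedure~\ref{4.1}, and then deduce $\CORE(G) \subset B$ from Theorem~\ref{4.3} together with a short structural observation about how the three colors can interact. The observation I would record at the outset is that \emph{no blue vertex is adjacent to any black vertex in $G$} and that \emph{every red vertex has at least one black child}. Both are immediate from inspecting Procedure~\ref{4.1}: in Step~3, the new black vertex $u_2$ is attached to the new red vertex $u_1$ (and never directly to the possibly-blue vertex $u$); in Step~4, a new black leaf is attached only to a red vertex; and every red vertex is itself created in Step~3 alongside a black child $u_2$. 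A direct consequence is that any independent set of $G$ containing all of $B$ must exclude every red vertex of $G$.

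For $\alpha(G) = |B| + m$ I would induct on the length of the construction sequence. The base case is $G = C_{2m+1}$, where $|B| = 0$ and $\alpha(G) = m$. For the inductive step, the arguments already given in the proofs of Lemma~\ref{4.2} and Corollary~\ref{4.5} show that each application of Step~3 or Step~4 increases $\alpha(G)$ by exactly $1$; since each such application also adds exactly one black vertex (namely $u_2$ in Step~3 and $u_1$ in Step~4), the quantity $\alpha(G) - |B|$ is preserved by both inductive operations and so remains equal to its base value $m$.

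To deduce $\CORE(G) \subset B$, I would invoke Theorem~\ref{4.3} to pick a maximum independent set $I^\ast$ of $G$ with $B \subset I^\ast$. The formula just proved gives $|I^\ast \setminus B| = m$, and by the structural observation $I^\ast \setminus B$ can contain no red vertex, so it is forced to be a maximum independent set of the odd cycle $C_{2m+1}$. This alone shows that no red vertex lies in $\CORE(G)$. For an arbitrary blue vertex $b \in V(C)$, one can choose an independent set $J$ of $C$ of size $m$ missing $b$ (every vertex of $C_{2m+1}$ is omitted from at least one of its maximum independent sets, since each vertex lies in only $m$ of the $2m+1$ rotationally distinct ones); the structural observation then makes $B \cup J$ independent in $G$ with $|B \cup J| = |B| + m = \alpha(G)$, producing a maximum independent set of $G$ that avoids $b$. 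Hence no blue vertex lies in $\CORE(G)$ either, and the inclusion $\CORE(G) \subset B$ follows.

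The main obstacle is simply the structural observation, because Procedure~\ref{4.1} permits Step~3 to be applied at any vertex and so a priori colors could mix in quite tangled patterns around a given black vertex. However, a direct case check of the two constructive steps shows that the parent of every black vertex is always red and that no black vertex is ever joined to a blue one, which is exactly what the rest of the argument needs.
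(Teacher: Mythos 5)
Your proposal is correct and follows essentially the same route as the paper: both use Theorem~\ref{4.3} to obtain a maximum independent set containing $B$, observe that its complement in that set is a maximum independent set of the odd cycle $C$, and exclude blue vertices from $\CORE(G)$ by rotating the cycle's independent set. The only differences are cosmetic: you derive $\alpha(G)=|B|+m$ by induction on the construction sequence rather than reading it off directly from the decomposition $I = B \sqcup I'$, and you make explicit the structural facts (no blue--black adjacency, every red vertex has a black child) that the paper uses implicitly.
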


\begin{proof} 
	By Theorem~\ref{4.3} there exists a largest independent set $I$ of $G$ containing $B$, so there is no red vertex in $I$. The set $I' := I - B \subset V(C)$  is a largest independent set of the graph $C$. Thus $|I'| = m$.  For any $x\in V(C)$, there exists a largest independent set $I_x$ of $C$ such that $x \notin I_x$. 
	Note that $B \cup I_x$ is a largest 
	independent of $G$.  Hence for any $x\in V(C)$, $x \notin \CORE (G)$. Thus $V(C) \cap \CORE (G) = \emptyset$ and
	$\CORE (G) \subset  B$. Also $\alpha(G) =|I|=|B\cup I'|=|B|+m$.
\end{proof} 

An edge is called \emph{red-black} if one endpoint of the edge is a red vertex and the other endpoint is 
a black vertex. 

\begin{LEMMA}
	\label{4.12}
	There is a largest matching $M$ in any connected unicyclic non-KE graph $G$ such that $R$ is covered by only red-black edges.
\end{LEMMA}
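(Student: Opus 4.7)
\medskip
\noindent\textbf{Proof plan.}
The plan is to induct on the number of Step~(3) and Step~(4) operations in a fixed Procedure~\ref{4.1} construction of $G$; by Theorem~\ref{4.9} such a construction exists. In the base case no operation has been applied, so $G$ is just the odd cycle $C$, the set $R$ is empty, and the statement is vacuously true. For the inductive step I peel off the very last operation of the construction and let $G'$ be the resulting smaller graph. Lemma~\ref{4.2} (in the Step~(3) case) and Corollary~\ref{4.5} (in the Step~(4) case), together with the $\alpha$-$\mu$ arithmetic recorded in their proofs, ensure that $G'$ is again a connected unicyclic non-KE graph; the subsequent theorem on the uniqueness of the vertex coloring guarantees that the colors of the surviving vertices in $G$ coincide with the colors they receive in the shortened construction of $G'$. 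The induction hypothesis then provides a largest matching $M'$ of $G'$ in which every red vertex is covered by a red-black edge, and the remaining task is to lift $M'$ to $G$.

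If the last operation is Step~(4), attaching a new black leaf $w$ to an existing red vertex $u$, then $G'=G-w$ and $R(G')=R(G)$. The theorem proved earlier in this section (``every largest matching covers all red vertices'') shows that $u$ is already saturated in $M'$, so the pendant edge $uw$ cannot extend $M'$; hence $\mu(G)=\mu(G')$ and $M'$ is itself a largest matching of $G$, still with every red vertex covered by a red-black edge. If the last operation is Step~(3), attaching a path $u\,u_1\,u_2$ with $u_1$ red and $u_2$ black to some vertex $u$, then $G'=G-u_1-u_2$ and Lemma~\ref{4.2} gives $\mu(G)=\mu(G')+1$. Setting $M=M'\cup\{u_1u_2\}$ produces a largest matching of $G$ in which the new red vertex $u_1$ is matched to the black vertex $u_2$ by the red-black edge $u_1u_2$, while every older red vertex is still matched by the red-black edge assigned to it in $M'$.

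The main obstacle is essentially book-keeping: one has to verify that stripping the last operation preserves connectedness, unicyclicity and the non-KE property so that the induction hypothesis applies, and one has to invoke the uniqueness of the coloring to make sense of ``red'' and ``black'' in $G'$. The claim that $M'$ remains a largest matching in the Step~(4) reduction leans specifically on the earlier theorem that every largest matching of a unicyclic non-KE graph covers all red vertices; without it one could not rule out that adding the pendant edge $uw$ strictly enlarges the matching. Once these points are settled the extension of $M'$ to the desired largest matching of $G$ is immediate in both cases.
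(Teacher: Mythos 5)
Your proposal is correct and takes essentially the same approach as the paper: a reverse induction along the Procedure~\ref{4.1} construction, using the induction hypothesis to get a good matching of the smaller graph and extending it by the new red--black edge. The only (minor) difference is granularity --- the paper inducts on $|R|$ and deletes the last red vertex together with \emph{all} of its black leaf children at once, so it only ever adds back the single edge $vw_1$, whereas you peel off one operation at a time and therefore need the extra observation (which you correctly supply via the theorem that every largest matching covers all red vertices) that attaching a Step~(4) leaf leaves $\mu$ unchanged.
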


\begin{proof} 
	We use induction on $|R|$ to prove this lemma.\\
	\emph{Base Case:} $|R| = 0$, where it is trivially true.\\
	\emph{Induction Hypothesis:} Let the assertion be true for $|R| = \ell - 1$. \\
	\emph{Induction Step:}
	Let $|R|=\ell$ and $v$ be the last red vertex  added in $G$  by Procedure~\ref{4.1}. 
	Note that the only descendants of $v$ are black 
	leaves $w_1, ... , w_k$, where $k \ge 1$. Now $G' = G - \{v, w_1, ... , w_k\}$ is also a unicyclic non-KE 
	graph with $\ell-1$ red vertices. By induction hypothesis, $G'$ has a largest matching $M$ which covers all the red vertices by only red-black edges. 
	Note that $M \cup \{vw_1\}$ 
	is a largest matching in $G$. Since $vw_1$ is a red-black edge, the lemma is proved.
\end{proof} 

\begin{COROLLARY}\label{4.13}
	For any connected unicyclic non-KE graph $G$, $\mu (G) = |R| + m$.
\end{COROLLARY}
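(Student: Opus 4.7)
My plan is to combine Theorem~\ref{4.11} with the standard observation that any unicyclic graph $G$ satisfies $|V(G)| - 1 \le \alpha(G) + \mu(G) \le |V(G)|$, which is recorded at the start of Section~4. Since $G$ is non-KE the upper bound fails, forcing $\alpha(G) + \mu(G) = |V(G)| - 1$.

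From there the computation is purely algebraic. Theorem~\ref{4.11} gives $\alpha(G) = |B| + m$, and the vertex set decomposes disjointly as $V(G) = V(C) \sqcup R \sqcup B$ with $|V(C)| = 2m+1$, so $|V(G)| = 2m + 1 + |R| + |B|$. Substituting,
\[
\mu(G) = |V(G)| - 1 - \alpha(G) = (2m + |R| + |B|) - (|B| + m) = |R| + m.
\]
There is essentially no obstacle in this route; once Theorem~\ref{4.11} is in hand, it is a one-line calculation.

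An alternative, more combinatorial path would invoke Lemma~\ref{4.12} directly. One would take the largest matching $M$ it provides, in which $R$ is entirely covered by $|R|$ red-black edges, and then observe by inspection of Procedure~\ref{4.1} that every edge of $G$ incident to a black vertex is red-black: black vertices enter joined only to a red parent, and the only step that attaches a new edge at a pre-existing vertex (Step~3) produces only a red neighbour. Consequently the edges of $M$ outside the distinguished $|R|$ red-black edges touch no red vertex (those are already saturated) and hence no black vertex either, so they must lie in $E(C)$. Since $V(C)$ is disjoint from $R$, the cycle portion of $M$ is unconstrained by the red-black edges, and the maximality of $M$ forces $|M \cap E(C)| = m$ (the matching number of an odd cycle of length $2m+1$). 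This again yields $\mu(G) = |R| + m$. The only delicate step in this second route is the edge classification, which is a routine case-check through the four steps of the procedure.
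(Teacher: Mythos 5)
Your first route is correct and is genuinely different from the paper's proof. The paper argues entirely on the matching side: it takes the maximum matching $M$ supplied by Lemma~\ref{4.12}, notes that every edge of $M$ that is not red-black must lie in $E(C)$, and concludes $|M| = |R| + m$ by counting the $|R|$ red-black edges and the $m$ cycle edges. You instead deduce the matching number formally from the independence number: since $G$ is unicyclic and non-KE, $\alpha(G)+\mu(G)=|V(G)|-1$, and with $\alpha(G)=|B|+m$ from Theorem~\ref{4.11} and the partition $V(G)=V(C)\sqcup R\sqcup B$ the arithmetic gives $\mu(G)=|R|+m$. This is cleaner and makes Lemma~\ref{4.12} unnecessary for this particular corollary (the lemma is still needed later, in Theorem~\ref{4.14}, to produce a matching from $R$ into $B$); what the paper's route buys in exchange is an explicit maximum matching witnessing the count, independent of any prior knowledge of $\alpha(G)$. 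Your second, alternative route is essentially the paper's argument, filled in with the edge classification (every edge meeting a black vertex is red-black) and the maximality argument for $|M\cap E(C)|=m$ that the paper leaves implicit; both of those details check out against Procedure~\ref{4.1}.
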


\begin{proof}
	Let $M$ be a largest matching in $G$ that covers all the red vertices by only red-black edges. 
	If $e \in M$ is not a red-black edge, then $e \in E(C)$. 
	Thus number of non-red-black edges in $M$ is the size of a largest matching in the graph $C$, which is $m$. 
	Hence $\mu(G) = |R| + m$.
\end{proof}

\begin{THEOREM} \label{4.14}
	For any connected unicyclic non-KE graph $G$, $B$ is a critical set of $G$. 
\end{THEOREM}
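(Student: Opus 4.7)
The plan is to deduce Theorem~\ref{4.14} from Theorem~\ref{2.15}, which says that any independent set $X$ containing $\KER(G)$ and admitting a matching from $N(X)$ into $X$ is critical. Setting $X=B$, I must verify three items: (a) $B$ is independent, (b) $\KER(G)\subset B$, and (c) there is a matching from $N(B)$ into $B$. I expect no deep obstacle: everything follows from a careful bookkeeping of the colors introduced in Procedure~\ref{4.1} together with the already-proved Lemma~\ref{4.12}.

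First I would inspect Procedure~\ref{4.1} step by step to pin down the neighborhoods of black vertices. A black vertex is born either as the endpoint $u_2$ of a path added by Step~(3), whose sole neighbor at creation is the red vertex $u_1$, or as a leaf $u_1$ attached to a red vertex by Step~(4). Any further edge incident to an existing black vertex $u$ can only be created when $u$ plays the role of the ``$u$'' in a subsequent Step~(3), which again contributes a red neighbor. Hence every neighbor of a black vertex is red, so in particular no two black vertices are adjacent (giving (a)) and $N(B)\subset R$. Conversely, each red vertex is introduced in Step~(3) simultaneously with a black child $u_2$ joined to it, so $R\subset N(B)$. Therefore $N(B)=R$.

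For (b), Theorem~\ref{2.2}(ii) gives $\KER(G)\subset\CORE(G)$, and Theorem~\ref{4.11} gives $\CORE(G)\subset B$; chaining these yields $\KER(G)\subset B$. For (c), Lemma~\ref{4.12} supplies a largest matching $M$ of $G$ in which every vertex of $R$ is saturated by a red-black edge of $M$. The sub-collection of these red-black edges is then a matching that saturates $R=N(B)$ and uses only edges with the other endpoint in $B$, i.e., a matching from $N(B)$ into $B$.

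Having verified (a), (b), and (c), Theorem~\ref{2.15} applied to $X=B$ immediately yields $\D(B)=\DC(G)$, so $B$ is critical. The only place where something could go wrong is the identification $N(B)=R$, and that is a routine inductive check on the sequence of steps producing $G$.
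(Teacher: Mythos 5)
Your proposal is correct and follows essentially the same route as the paper: both establish $\KER(G)\subset\CORE(G)\subset B$ via Theorems~\ref{2.2} and \ref{4.11}, identify $N(B)=R$ by tracking the colors in Procedure~\ref{4.1}, extract the matching from $R$ into $B$ from Lemma~\ref{4.12}, and conclude by Theorem~\ref{2.15}. Your explicit verification that $B$ is independent is a detail the paper leaves implicit, but it is the same argument.
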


\begin{proof}
	Note that $\KER (G) \subset \CORE (G)\subset B $ (Theorem~\ref{2.2} and Theorem~\ref{4.11}). 
	Observe that in Procedure~\ref{4.1} whenever a red vertex is added, an adjacent black vertex is also added. Thus $R \subset N(B)$. 
	Also a black vertex is never attached to another black vertex or a blue vertex (vertex belonging to the unique cycle $C$). Thus $N(B) = R$. Also by Lemma~\ref{4.12}, 
	there is a matching from $R$ into $B$. Hence by Theorem~\ref{2.15}, $B$ is critical.
\end{proof} 

\begin{COROLLARY}
	For any connected unicyclic non-KE graph $G$, $\DC(G) = |B| - |R| = \alpha(G) - \mu(G)$.
\end{COROLLARY}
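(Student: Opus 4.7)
The plan is to observe that essentially all the work has been done in the preceding results, and the corollary is a short calculation combining Theorem~\ref{4.14}, Theorem~\ref{4.11}, and Corollary~\ref{4.13}. There is no real obstacle; the statement is a bookkeeping consequence of the structural facts already established.

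First I would use Theorem~\ref{4.14}, which asserts that $B$ is a critical set of $G$, to conclude $\DC(G) = \D(B) = |B| - |N(B)|$. Next I would invoke the identity $N(B) = R$, which was established inside the proof of Theorem~\ref{4.14} from the two observations built into Procedure~\ref{4.1}: every red vertex is introduced together with an adjacent black vertex (so $R \subset N(B)$), while black vertices are only ever attached to red vertices (so $N(B) \subset R$). This immediately yields $\DC(G) = |B| - |R|$.

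Finally, for the second equality, I would combine Theorem~\ref{4.11}, which gives $\alpha(G) = |B| + m$, with Corollary~\ref{4.13}, which gives $\mu(G) = |R| + m$, where $2m+1$ is the length of the unique odd cycle $C$ of $G$. Subtracting, the cycle contribution $m$ cancels and we obtain $\alpha(G) - \mu(G) = |B| - |R|$, matching the expression for $\DC(G)$ computed above. Chaining the two equalities finishes the proof.
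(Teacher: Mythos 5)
Your proposal is correct and follows exactly the paper's route: the paper's proof is the one-line remark that the corollary follows from Theorem~\ref{4.11}, Corollary~\ref{4.13} and Theorem~\ref{4.14}, and you have simply spelled out that same combination (criticality of $B$, the identity $N(B)=R$, and the cancellation of the cycle contribution $m$). No gaps.
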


\begin{proof}
	Follows from Theorem~\ref{4.11}, Corollary~\ref{4.13} and Theorem~\ref{4.14}.
\end{proof}
If $G$ is a disconnected unicyclic non-KE graph, then $G=G' \oplus F$, 
($\oplus$ denotes disjoint union of graphs), where $G'$ 
is the component of $G$ containing the unique (odd) cycle and $F$ is a nonempty forest. 
Observe that $G'$ is a connected unicyclic non-KE graph. 
Conversely, if $G'$ is any connected unicyclic non-KE graph and $F$ is an arbitrary forest, then 
$G=G' \oplus F$ is a unicyclic non-KE graph.

By the corollary above, $\DC(G') = \alpha(G') - \mu(G')$. It is known that for KE graphs 
the critical difference equals the difference between independence number and matching number \cite{2012_3}. 
Hence $\DC(F)=\alpha(F)-\mu(F)$. 
It may be verified that $\DC(G) = \DC(G')+\DC(F)$, $\alpha(G) = \alpha(G')+\alpha(F)$ and
$\mu(G)= \mu(G')+\mu(F)$. 
Thus $\DC(G) = \alpha(G') - \mu(G') + \alpha(F)-\mu(F)= \alpha(G) - \mu(G)$.
Hence $\DC(G) = \alpha(G) - \mu(G)$ holds for any disconnected unicyclic non-KE graph $G$ too and thus Conjecture~\ref{1.3} is proved.
\bigskip\\

\begin{COROLLARY}
	Let $G$ be a connected unicyclic non-KE graph. Consider any sequence of Steps in Procedure~\ref{4.1} that results in $G$. Then the critical difference of $G$ is  the number of times a black vertex is added by Step (4) in the sequence.
\end{COROLLARY}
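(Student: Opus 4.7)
The plan is to reduce the statement to the already-established identity $\DC(G)=|B|-|R|$ from the previous corollary, and then to simply count how many red and black vertices each step of Procedure~\ref{4.1} contributes.

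First I would inspect Procedure~\ref{4.1} step by step. Step~(1) produces only blue vertices, so it contributes nothing to $|B|$ or $|R|$. Step~(3) adds exactly one red vertex $u_1$ and exactly one black vertex $u_2$, so each application of Step~(3) increases both $|R|$ and $|B|$ by $1$. Step~(4) adds exactly one black vertex $u_1$ and no red vertex, so each application of Step~(4) increases $|B|$ by $1$ and leaves $|R|$ unchanged. Since no other step modifies the red or black vertex sets, if we let $s_3$ and $s_4$ denote the number of times Steps~(3) and~(4) are applied in a chosen sequence producing $G$, then $|R|=s_3$ and $|B|=s_3+s_4$, hence $|B|-|R|=s_4$.

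Combining this with the preceding corollary gives $\DC(G)=|B|-|R|=s_4$, which is the desired identity. The slight subtlety is that the statement claims the count $s_4$ is the same for every sequence of steps producing $G$; this follows because $|B|$ and $|R|$ depend only on $G$ (the coloring is unique by the theorem preceding the discussion of $B$ and $R$), so $s_4=|B|-|R|$ is likewise determined by $G$ alone.

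I do not expect any serious obstacle here: the only thing to be careful about is matching the bookkeeping in each step of the procedure against the definitions of $B$ and $R$, and invoking the uniqueness of the coloring to ensure the count is well defined regardless of the sequence chosen.
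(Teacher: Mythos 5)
Your proposal is correct and is essentially the paper's own argument: the paper likewise observes that the red vertices are in one-to-one correspondence with the black vertices added by Step~(3), so that $|B|-|R|$ equals the number of Step~(4) applications, and then invokes the preceding corollary $\DC(G)=|B|-|R|$. Your additional remark on well-definedness via the uniqueness of the coloring is a sensible (if implicit in the paper) touch.
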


\begin{proof}
	As the red vertices in the construction of a graph $G$ are in a one-to-one correspondence with the black vertices added by Step (3),
	the result follows. 
\end{proof}

\section{Concluding Observations}
Core, ker, diadem of a graph and other related notions have been well studied, but some basic questions still remain. Two of the least understood problems are: 

\begin{PROBLEM}\cite{2015arXiv150600255J, 2013_1}
	Characterize graphs such that $\CORE (G)$ is critical.
\end{PROBLEM}

\begin{PROBLEM}\cite{2015arXiv150600255J, 2013_1}
	Characterize graphs with $\KER  (G)=\CORE (G)$.
\end{PROBLEM}

We tried to use Edmonds-Gallai decomposition  to understand $\KER (G)$ better. The observations in this section seem to be insufficient
to address the above problems but may be useful in further work. For the completeness and to fix the notation  Edmonds-Gallai decomposition is stated below. 

Let $G$ be any finite simple graph. Let $\mathcal{D}$ be the set of vertices not covered (\emph{missed}) by a largest matching 
in $G$ and $\mathcal{A}$ be the set of neighbors of $\mathcal {D}$ outside $\mathcal D$.
The set $\mathcal C$ contains the remaining vertices. Note that $V(G) = \mathcal{A} \sqcup \mathcal{C} \sqcup \mathcal{D}$.
Edmonds-Gallai decomposition of $G$ is the partition of $V(G)$
into the three sets $\mathcal{A}, \mathcal{C}$ and $\mathcal{D}$.

\begin{THEOREM}[Edmonds-Gallai Structure Theorem]\cite{lovaszplummer}
	\label{5.3}
	Let $\mathcal{A}$, $\mathcal{C}$ and $\mathcal{D}$  be the sets in the Edmonds-Gallai  
	decomposition of a graph $G$. Let $G_1$, \ldots, $G_k$ be the components of $G[\mathcal{D}]$.
	If $M$ is a largest matching in $G$, then the following properties hold.
	\begin{enumerate}[label=(\roman*)]
		\item All the vertices in $\mathcal {C}$ are matched amongst themselves by $M$ (which implies $G[\mathcal{C}]$ has a perfect matching).
		\item If  $S$ is a nonempty subset of $\mathcal{A}$, then $N(S)$ has a vertex in at least $|S| + 1$ distinct components 
		of $G[\mathcal{D}]$.
		\item All the vertices in $\mathcal{A}$ are matched with vertices belonging to distinct components of $G[\mathcal{D}]$ by $M$. In other words, for any pair of vertices $x$ and $y$ belonging to $\mathcal{A}$, there are distinct integers $i$ and $j$ in $\{1, \ldots, k\}$ such that $x$ and $y$ are matched with a vertex belonging to $V(G_i)$ and a vertex belonging to $V(G_j)$ respectively by $M$.
		\item Each of $G_1$, \ldots, $G_k$ is ``factor-critical" (a graph $H$ is called factor-critical if for all $v \in V(H)$, the graph $H - v$ has a perfect matching).
	\end{enumerate}
\end{THEOREM}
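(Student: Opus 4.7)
The plan is to establish this classical decomposition theorem by combining two cornerstones: Berge's characterization of maximum matchings via augmenting paths, and the Tutte--Berge formula expressing the deficiency $|V(G)| - 2\mu(G) = \max_{U \subseteq V(G)} (c_o(G-U) - |U|)$, where $c_o(H)$ denotes the number of odd components of $H$. The driving observation, which I would prove first as a lemma, is a \emph{transfer principle}: if $M$ and $M_v$ are both maximum matchings with $M_v$ missing a vertex $v$, then $M \triangle M_v$ decomposes into even cycles and alternating paths, each path component having one endpoint missed by $M$ and the other missed by $M_v$. This lets one reroute the unmatched vertex along alternating paths without changing the matching size.

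Next I would establish (iv), factor-criticality of each component $G_i$ of $G[\mathcal{D}]$. Given $v \in V(G_i)$, choose a maximum matching $M_v$ missing $v$; the goal is to show $M_v$ restricted to $V(G_i)$ is a perfect matching of $G_i - v$. The critical step is to rule out any edge of $M_v$ crossing from $V(G_i) - \{v\}$ to $V(G) - V(G_i)$: such an edge, combined with the transfer principle applied to $M_u$ for another $u \in V(G_i)$, would produce an $M_v$-alternating walk that either exhibits an augmenting path (contradicting maximality) or forces a supposedly-outside vertex into $\mathcal{D}$. Once the matching stays inside the component, it saturates $V(G_i) - \{v\}$, so $G_i - v$ has a perfect matching and each $G_i$ has odd order.

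For parts (i) and (iii), every $c \in \mathcal{C}$ is matched by every maximum matching $M$ (otherwise $c \in \mathcal{D}$), and $c$ must be matched inside $\mathcal{C}$: pairing $c$ with some $a \in \mathcal{A}$ would, via the alternating path from $a$ to a $\mathcal{D}$-vertex adjacent to $a$, place $c$ in $\mathcal{D}$. This yields (i), and (iii) follows because each $a \in \mathcal{A}$ must then be matched into $\mathcal{D}$, while two $\mathcal{A}$-vertices hitting the same $G_i$ would combine with the factor-critical structure of $G_i$ to yield an $M$-augmenting path. Part (ii) then follows by comparing the choices $U = \mathcal{A}$ and $U = \mathcal{A} - S$ in the Tutte--Berge formula: factor-criticality makes $\mathcal{A}$ saturate the formula with deficiency $k - |\mathcal{A}|$ where $k$ is the number of components of $G[\mathcal{D}]$; if some nonempty $S \subseteq \mathcal{A}$ touched at most $|S|$ components, removing $S$ from the Tutte--Berge set would preserve at least $k - |S|$ odd components (the ones not adjacent to $S$), contradicting maximality.

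The hard part will be the factor-criticality step (iv); all other assertions are essentially consequences of factor-criticality, the transfer principle, and the Tutte--Berge formula, but factor-criticality itself requires the most delicate alternating-path bookkeeping across different maximum matchings. The crux is the claim that no edge of $M_v$ can cross the boundary of $V(G_i)$ from inside, which is precisely where one simultaneously uses the definition of $\mathcal{A}$, the connectivity of $G_i$, and the maximality of $M_v$.
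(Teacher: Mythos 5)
The paper does not actually prove this statement: Theorem~5.3 is the classical Gallai--Edmonds structure theorem, quoted from Lov\'asz--Plummer and used as a black box, so there is no in-paper proof to compare yours against. Judged on its own, your outline names the right ingredients (symmetric differences of maximum matchings, the Tutte--Berge formula, factor-criticality of the $G_i$), but the two places where the theorem is genuinely hard are exactly where the argument stops being a proof. In step (iv) you assert that once no edge of $M_v$ leaves $V(G_i)-\{v\}$, the matching ``saturates $V(G_i)-\{v\}$''; this is unjustified, since a maximum matching missing $v$ could a priori miss further vertices of $G_i$, and ruling that out is essentially the content of the theorem. It is usually handled either via Gallai's lemma (a connected graph all of whose vertices are missed by some maximum matching is factor-critical, proved by a minimal-distance argument on a pair of missed vertices) or via the Stability Lemma and peeling off $\mathcal{A}$ one vertex at a time; neither is carried out here. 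Likewise the sentence about an ``$M_v$-alternating walk that either exhibits an augmenting path or forces a supposedly-outside vertex into $\mathcal{D}$'' is precisely the kind of alternating-walk reasoning that fails naively in non-bipartite graphs: odd cycles let alternating walks revisit vertices without producing augmenting paths, and this blossom difficulty is the reason the theorem is nontrivial.

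There is also a concrete gap in your derivation of (ii). If a nonempty $S\subseteq\mathcal{A}$ has neighbors in only $t\le |S|$ components of $G[\mathcal{D}]$, then taking $U'=\mathcal{A}-S$ gives $c_o(G-U')\ge k-t$, hence $c_o(G-U')-|U'|\ge k-t-|\mathcal{A}|+|S|\ge k-|\mathcal{A}|$. That is only \emph{equality} with the Tutte--Berge maximum, not a violation of it, so no contradiction follows; $U'$ could simply be another maximizer. The standard repair uses (iii) together with the fact that every vertex $w$ of every $G_j$ is missed by some maximum matching $M_w$: a parity count on the odd set $V(G_j)$ shows that in $M_w$ the component $G_j$ absorbs no edge from $\mathcal{A}$, so the $|S|$ vertices of $S$ would have to be matched into only $|S|-1$ distinct components, which is the actual contradiction. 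The skeleton of your plan is recoverable, but as written the factor-criticality step (the crux) is unproved and the surplus step (ii) is logically incomplete.
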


First a simple observation is stated.

\begin{LEMMA}
	\label{5.4}
	Let $G$ be a disjoint union of factor critical graphs each of order strictly greater than 1. If $S \in \IND(G)$ and $S \ne \emptyset$, then $\D_G (S) < 0$.
\end{LEMMA}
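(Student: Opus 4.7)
The plan is to reduce the statement to a single factor-critical graph and then exploit the defining near-perfect matching. Writing $G = G_1 \oplus \cdots \oplus G_k$ with each $G_i$ factor-critical of order $\ge 3$, an independent set $S$ decomposes as $S = \bigsqcup_i S_i$ with $S_i = S \cap V(G_i)$, and since there are no edges between components, $N_G(S) = \bigsqcup_i N_{G_i}(S_i)$ and hence $\D_G(S) = \sum_i \D_{G_i}(S_i)$. So it suffices to prove that for any factor-critical graph $H$ with $|V(H)| \ge 3$ and any nonempty independent set $T$ of $H$, one has $\D_H(T) \le -1$, while $\D_H(\emptyset) = 0$; then summing yields $\D_G(S) \le \D_{G_j}(S_j) \le -1$ for any index $j$ with $S_j \ne \emptyset$.

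For the single-component step, I would first note that any factor-critical graph is connected (a standard observation: if $H$ were disconnected, then deleting a vertex from one component would leave a component of odd order in $H-v$, obstructing a perfect matching). Since $H$ is connected, $|V(H)| \ge 3$, and $T$ is a proper nonempty independent set, the neighborhood $N_H(T)$ is nonempty and disjoint from $T$.

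The key move is to pick a vertex $v \in N_H(T)$ and invoke the factor-critical property to get a perfect matching $M_v$ of $H-v$. Every $s \in T$ is then matched by $M_v$ to some $m(s) \in V(H) \setminus \{v\}$; because $T$ is independent, the matched partner cannot lie in $T$, so $m(s) \in N_H(T) \setminus \{v\}$. Since $M_v$ is a matching, $s \mapsto m(s)$ is an injection $T \hookrightarrow N_H(T) \setminus \{v\}$, giving $|N_H(T)| \ge |T| + 1$ and hence $\D_H(T) \le -1$, as required.

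The only mildly subtle point, and the step I would flag as the crux, is choosing $v$ inside $N_H(T)$ rather than inside $T$: deleting a vertex of $T$ would only yield $|N_H(T)| \ge |T| - 1$, which is not enough, whereas deleting a neighbor loses one vertex from the target side of the injection and converts the Hall-type inequality into the strict one we need. Everything else is a routine combination of the disjoint-union additivity of $\D$ and the existence of a near-perfect matching guaranteed by factor-criticality.
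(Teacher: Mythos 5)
Your proof is correct and follows essentially the same route as the paper's: reduce to a single connected factor-critical component, pick a vertex $v \in N(S)$, and use the perfect matching of $G - v$ together with the independence of $S$ to inject $S$ into $N(S) \setminus \{v\}$, yielding $|N(S)| \ge |S| + 1$. The only difference is that you spell out the component-wise additivity of $\D$ and the matching-partner injection, which the paper compresses into ``without loss of generality $G$ is connected'' and ``$|N_{G'}(S)| \ge |S|$''.
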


\begin{proof}
	Without loss of generality it may be assumed that $G$ is connected. Since $G$ is connected and $|V(G)| > 1$, each vertex of $G$ has at least one neighbor. This implies (as $S \ne \emptyset$), $N_G (S) \ne \emptyset$. Now choose a vertex $v \in N_G (S)$.
	Let $G' = G - v$. Since $G$ is a factor critical graph, $G'$ has a perfect matching. 
	As $S$ is an independent set of $G'$ and $G'$ admits a perfect matching, $|N_{G'}(S)| \ge |S|$. Observe that $N_{G'}(S) = N_G (S) - \{v\}$. Thus $|N_G (S) - \{v\}| \ge |S|$. Therefore, $\D_G (S) =|S|- |N_G (S)|= |S|- |N_G (S) - \{v\}| -1 < 0$.
\end{proof}

\begin{THEOREM}
	\label{5.5}
	Let $S$ be a critical independent set of $G$. If $G_1, \ldots , G_k$ are the components of $G[\mathcal{D}]$, then 
	$$S 
	\subset \mathcal{C} \cup \left(\bigsqcup\limits _{
		\substack{ i = 1 \\ |V(G_i)| = 1}
	}^k
	V(G_i)\right).$$
\end{THEOREM}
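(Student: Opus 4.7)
The plan is to argue by contradiction using the structural information supplied by the Gallai--Edmonds decomposition. Decompose $S = S_{\mathcal{A}} \cup S_{\mathcal{C}} \cup S_{\mathcal{D}}$, with $S_X = S \cap X$, and write $S_i = S \cap V(G_i)$ for each $i$. Set
\[
T \;=\; S_{\mathcal{A}} \cup \bigcup_{i \,:\, |V(G_i)| > 1} S_i, \qquad S' \;=\; S \setminus T,
\]
so the theorem reduces to the statement $T = \emptyset$. Assume for contradiction that $T \neq \emptyset$. Since $S = S' \cup T$ and $S' \cap T = \emptyset$, the identity $N(S) = N(S') \cup N(T)$ yields
\[
\D(S') - \D(S) \;=\; |N(T) \setminus N(S')| - |T|,
\]
so it suffices to produce strictly more than $|T|$ vertices of $N(T)$ that lie outside $N(S')$; this will force $\D(S') > \D(S) = \DC(G)$, contradicting criticality of $S$.

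The first technical step is to record a standard Gallai--Edmonds fact not stated explicitly in Theorem~\ref{5.3}: there are no edges of $G$ between $\mathcal{C}$ and $\mathcal{D}$. Indeed, if $cd \in E(G)$ with $c \in \mathcal{C}$ and $d \in \mathcal{D}$, pick a largest matching $M$ missing $d$; by Theorem~\ref{5.3}(i), $c$ is matched in $M$ to some $c' \in \mathcal{C}$, but then $(M \setminus \{cc'\}) \cup \{cd\}$ is a largest matching missing $c'$, forcing $c' \in \mathcal{D}$, a contradiction. Combined with the fact that distinct components of $G[\mathcal{D}]$ are non-adjacent in $G$, this gives $N_G(v) \subset \mathcal{A} \cup V(G_i)$ for every vertex $v$ of a nontrivial $G_i$, and $N_G(u) \subset \mathcal{A}$ for every singleton component $\{u\}$ of $G[\mathcal{D}]$. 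Since $S' \cap \mathcal{A} = \emptyset$, $S'$ avoids every nontrivial $V(G_i)$, and $S'$ meets $\mathcal{D}$ only in singleton components, none of the $\mathcal{D}$-vertices produced in the next step will lie in $N(S')$.

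The counting step then exhibits three disjoint pools inside $N(T) \setminus N(S')$. Let $T_{\mathcal{A}} = S_{\mathcal{A}}$, $T_{\mathcal{D}} = T \setminus T_{\mathcal{A}}$, $J = \{i : |V(G_i)| > 1,\ S_i \neq \emptyset\}$, and let $J_n$ (resp.\ $J_t$) denote the nontrivial (resp.\ singleton) components of $G[\mathcal{D}]$ met by $N(T_{\mathcal{A}})$. The contributions are: (a) for each $i \in J$, Lemma~\ref{5.4} applied to the factor-critical graph $G_i$ supplies $|N_{G_i}(S_i)| \geq |S_i| + 1$ vertices of $V(G_i)$; (b) for each $i \in J_n \setminus J$, at least one vertex of $V(G_i)$ lies in $N(T_{\mathcal{A}}) \subset N(T)$; (c) each singleton of $J_t$ contributes its unique vertex. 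These pools lie in disjoint subsets of $\mathcal{D}$, so summing and using $|J| + |J_n \setminus J| \geq |J_n|$ yields $|N(T) \setminus N(S')| \geq |T_{\mathcal{D}}| + |J_n| + |J_t|$. If $T_{\mathcal{A}} \neq \emptyset$, Theorem~\ref{5.3}(ii) gives $|J_n| + |J_t| \geq |T_{\mathcal{A}}| + 1$, so the bound becomes $\geq |T| + 1$; if $T_{\mathcal{A}} = \emptyset$, then $T_{\mathcal{D}} \neq \emptyset$ forces $|J| \geq 1$, and the intermediate estimate $|T_{\mathcal{D}}| + |J|$ already exceeds $|T|$. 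Either way the required strict inequality $|N(T) \setminus N(S')| > |T|$ holds.

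The main obstacle is the bookkeeping in this count: one must keep the three contributions disjoint and verify that every counted vertex really does avoid $N(S')$. The ``no edges between $\mathcal{C}$ and $\mathcal{D}$'' fact is the technical pivot, since without it vertices of nontrivial $G_i$ could be reached from $S_{\mathcal{C}} \subset S'$ and the whole count would collapse.
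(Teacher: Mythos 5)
Your proof is correct and follows essentially the same route as the paper's: split off $T = S \cap \bigl(\mathcal{A} \cup \bigcup_{|V(G_i)|>1} V(G_i)\bigr)$, show its removal cannot decrease the difference by more than it gains, and count the neighbors lost in $\mathcal{D}$ via Lemma~\ref{5.4} on the factor-critical components together with Theorem~\ref{5.3}(ii) for $S \cap \mathcal{A}$, with the same two-case split on whether $S \cap \mathcal{A}$ is empty. (Your derivation of ``no $\mathcal{C}$--$\mathcal{D}$ edges'' via matching exchange is more elaborate than needed, since it is immediate from the definition $\mathcal{A} = N(\mathcal{D}) \setminus \mathcal{D}$, but this is cosmetic.)
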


\begin{proof}
	Let 
	$$Y=\mathcal{C} \cup \left(\bigsqcup\limits _{
		\substack{ i = 1 \\ |V(G_i)| = 1}
	}^k
	V(G_i)\right)$$ 
	and
	$$X =  S\cap (V(G) - Y) = S \cap \left(\mathcal{A} \cup \left(\bigsqcup\limits _{
		\substack{ i = 1 \\ |V(G_i)| > 1}
	}^k
	V(G_i)\right)\right).$$ 
	We need to show that $X = \emptyset$.
	Suppose to the contrary that $X \neq \emptyset$.\\ 
	\emph{Assertion:} $|X| - |N(X) \cap \mathcal{D}| < 0$.\\
	Let $\J=\{ i \in \{ 1, \ldots, k\} : |V(G_i)|>1 \mbox{ and } V(G_i)\cap S \neq \emptyset \}$ and $m=|\J|$.
	
	Let $X_i = S  \cap V(G_i)$, for $i\in \J$ and $X_0 = S  \cap \mathcal{A}$. 
	Note that $X =X_0\sqcup (\sqcup_{i \in \J } X_i)$.
	Since for all $i \in \{ 1, \ldots, k\}$, $G_i$ is factor critical, Lemma~\ref{5.4} implies that for $ i \in \J$, $|N(X_i) \cap V(G_i)| \ge |X_i|+1$. Thus 
	$$\sum_{i \in \J} |N(X_i) \cap V(G_i)| \ge \sum_{i\in \J} |X_i| + m.$$
	
	\indent \emph {Case 1:} $X_0 = \emptyset$.\\
	Since in this case $m> 0$,
	\begin{align*}
	&  \sum_{i \in \J} |N(X_i) \cap V(G_i)| > \sum_{i \in \J} |X_i| = |X|\\
	\Rightarrow & |N(X) \cap \mathcal{D}| = 
	|(\cup_{i \in \J} N(X_i)) \cap  \mathcal{D}|  > |X|.
	\end{align*}
	\indent \emph{Case 2:} $X_0 \neq \emptyset$.\\
	Let $W = 
	\sqcup_{i \in \J} V(G_i)$.
	Now we shall show that 
	$|N(X_0) \cap (\mathcal{D} - W)| > |X_0| - m$ when $|X_0|\ge m$.
	By Theorem~\ref{5.3}(ii), $X_0$ has neighbors in at least $|X_0| +1$ 
	components of $G[\mathcal{D}]$.
	Thus $X_0$ has neighbors in at least $|X_0| + 1 - m$  components of $G[\mathcal{D}]$ different from $G_i$,
	for $i\in \J$. 
	Hence $|N(X_0) \cap (\mathcal{D} - W)| \ge |X_0| + 1 - m > |X_0| - m$.  Also
	$$|(\cup_{i \in \J} N(X_i)) \cap W | = \sum_{i \in \J} |N(X_i) \cap V(G_i)| \ge 
	\sum_{i\in \J} |X_i| + m.$$
	Therefore, 
	\begin{align*}
	|N(X) \cap \mathcal{D}| &\ge |N(\cup_{i \in \J} X_i) \cap W| + |N(X_0) \cap (\mathcal{D} - W)| \\
	&>  \sum_{i \in \J} |X_i| + m + |X_0| - m \\
	&= |X|.
	\end{align*}
	Thus in both the cases $|N(X) \cap \mathcal{D}| > |X|$ and the assertion is proved. \\\\
	Next note that 
	$$S - X \subset Y = \mathcal{C} \cup \left(\bigsqcup\limits _{
		\substack{ i = 1 \\ |V(G_i)| = 1}
	}^k
	V(G_i)\right).$$
	In other words, all the vertices in $S - X$ belong to either the set $\mathcal{C}$ or the singleton components of $G[\mathcal{D}]$. Now vertices belonging to the singleton components of $G[\mathcal{D}]$ do not have neighbors in any component of $G[\mathcal{D}]$. Also it follows from Edmonds-Gallai decomposition that there are no edges joining vertices in $\mathcal{D}$ and vertices in $\mathcal{C}$. Thus $N(Y) \cap \mathcal{D} = \emptyset$ and in particular $N(S - X) \cap \mathcal{D} = \emptyset$.
	This implies 
	\begin{align*}
	N(S - X) &\subset N(S) - \mathcal{D} \\
	&\subset N(S) - (N(S) \cap \mathcal{D}).
	\end{align*}
	Thus $|N(S - X)| \le |N(S)| - |N(S) \cap \mathcal{D}| = |N(S)| - |N(X) \cap \mathcal{D}|$.
	From these observations it follows that 
	\begin{align*}
	\D(S - X) &= |S| - |X| - |N(S - X)| \\
	&\ge  |S| - |X| - |N(S)| + |N(X) \cap \mathcal{D}| \\
	&= \D(S) + |N(X) \cap \mathcal{D}| - |X| \\
	&> \D(S).
	\end{align*}
	This contradicts  the criticality of $S$.
\end{proof}

\begin{COROLLARY}
	If $G_1, G_2, ... , G_k$ are the components of $G[\mathcal{D}]$, then 
	$$\KER (G) \subset \bigsqcup\limits _{
		\substack{ i = 1 \\ |V(G_i)| =1}
	}^k
	V(G_i).$$
\end{COROLLARY}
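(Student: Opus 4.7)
The corollary strengthens Theorem~\ref{5.5} applied to $\KER(G)$ by ruling out the $\mathcal{C}$ part. The plan is to show that $\KER(G) \cap \mathcal{C} = \emptyset$ by showing that the $\mathcal{D}$-singleton portion of $\KER(G)$ is already critical; then the unique-minimality from Theorem~\ref{2.2}(i) forces $\KER(G)$ to equal this portion.

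Since $\KER(G)$ is a critical independent set (Theorem~\ref{2.2}(i)), Theorem~\ref{5.5} gives
$$\KER(G) \subset \mathcal{C} \cup \bigsqcup_{\substack{i=1\\|V(G_i)|=1}}^{k} V(G_i).$$
Write $S_1 = \KER(G) \cap \mathcal{C}$ and $S_2 = \KER(G) - S_1$, so that $S_2$ lies in the union of the singleton components. I want to compare $\D(\KER(G))$ with $\D(S_2)$.

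The key input is Theorem~\ref{5.3}(i): $G[\mathcal{C}]$ has a perfect matching. Since $S_1$ is independent (being a subset of the independent set $\KER(G)$), this perfect matching pairs each vertex of $S_1$ with a distinct neighbor lying in $\mathcal{C} - S_1$, so $|N(S_1) \cap \mathcal{C}| \ge |S_1|$. On the other hand, $S_2$ is contained in $\mathcal{D}$, and by the Edmonds-Gallai structure there are no edges between $\mathcal{C}$ and $\mathcal{D}$, so $N(S_2) \cap \mathcal{C} = \emptyset$. In particular, the sets $N(S_1) \cap \mathcal{C}$ and $N(S_2)$ are disjoint subsets of $N(\KER(G))$, which yields
$$|N(\KER(G))| \ge |N(S_1) \cap \mathcal{C}| + |N(S_2)| \ge |S_1| + |N(S_2)|.$$
Substituting this into the definition of difference gives
$$\D(\KER(G)) = |S_1| + |S_2| - |N(\KER(G))| \le |S_2| - |N(S_2)| = \D(S_2).$$

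Since $\D(\KER(G)) = \DC(G)$ and $\D(S_2) \le \DC(G)$, we get equality, so $S_2$ is itself a critical independent set. By Theorem~\ref{2.2}(i), $\KER(G)$ is the unique minimal critical independent set, forcing $\KER(G) \subset S_2$ and hence $S_1 = \emptyset$. This completes the argument and yields the desired inclusion. The main obstacle is the neighbor-counting step: one has to observe simultaneously that the perfect matching on $\mathcal{C}$ forces $|N(S_1) \cap \mathcal{C}| \ge |S_1|$ and that the $\mathcal{C}$--$\mathcal{D}$ edge-free property of the Edmonds-Gallai decomposition makes these $\mathcal{C}$-neighbors \emph{additional} to the neighbors of $S_2$, so that they genuinely pay for the extra $|S_1|$ vertices contributed by $S_1$ to $\KER(G)$.
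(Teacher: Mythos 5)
Your proposal is correct and follows essentially the same route as the paper: both isolate $\KER(G)\cap\mathcal{C}$, use the perfect matching on $G[\mathcal{C}]$ from Theorem~\ref{5.3}(i) to get at least $|S_1|$ neighbors inside $\mathcal{C}$, use the absence of $\mathcal{C}$--$\mathcal{D}$ edges to keep these neighbors disjoint from $N(S_2)$, and conclude via the minimality of $\KER(G)$ as a critical independent set. The only cosmetic difference is that the paper phrases the final step as a contradiction with $\D(\KER(G)-X)\ge \DC(G)$, while you phrase it as $S_2$ being critical and invoking unique minimality; these are the same argument.
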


\begin{proof}
	Since $\KER (G)$ is a critical independent set of $G$, Theorem~\ref{5.5} implies that 
	$$\KER (G) \subset \mathcal{C} \cup \left(\bigsqcup\limits _{
		\substack{ i = 1 \\ |V(G_i)| =1}
	}^k
	V(G_i)\right).$$
	Let $X =\KER(G) \cap \mathcal{C}$.  We shall show that $X = \emptyset$. 
	Suppose to the contrary that $X \ne \emptyset$.
	By Theorem~\ref{5.3}(i), vertices in $\mathcal {C}$ are matched amongst themselves by any largest matching. 
	Thus there is a matching from $X$ into $N(X) \cap \mathcal{C}$, which implies $|N(X) \cap \mathcal{C}| \ge |X|$. 
	Now observe that 
	$$\KER (G) - X \subset \bigsqcup\limits _{
		\substack{ i = 1 \\ |V(G_i)| =1}
	}^k
	V(G_i).$$
	In other words, $\KER (G) - X$ is a set of vertices belonging to the singleton components of $G[\mathcal{D}]$. It follows from Edmonds-Gallai decomposition that there are no edges joining vertices in $\mathcal{D}$ and vertices in $\mathcal{C}$. Thus vertices belonging to the singleton components of $G[\mathcal{D}]$ have neighbors (if any) only in $\mathcal{A}$. In particular, $N(\KER(G) - X) \subset \mathcal{A}$, and hence 
	\begin{align*}
	N(\KER(G) - X) &\subset N(\KER(G)) \cap \mathcal{A} \\
	&\subset N(\KER(G)) -  \mathcal{C} \\
	&\subset N(\KER(G)) - (N(X) \cap \mathcal{C}).
	\end{align*}
	Therefore, 
	\begin{align*}
	\D(\KER(G) - X) &= |\KER(G)| - |X| - |N(\KER(G) - X)| \\
	&\ge |\KER(G)| - |X| - |N(\KER(G))| + |N(X) \cap \mathcal{C}| \\
	&\ge \DC(G),
	\end{align*}
	a contradiction to the minimality of $\KER(G)$ as a critical independent set.
\end{proof}

It would also be nice to generalize the results proved for unicyclic non-KE graphs to graphs $G$ for which 
$\alpha (G)+ \mu (G) = |V(G)|-k$, where $k$ is a constant. It would be interesting to look at properties 
of $\KER$, $\CORE$, $\DIADEM$ etc. for graphs that are ``close" to bipartite graphs. 

\section{Acknowledgement}
Amitava Bhattacharya would like to thank  Vadim Levit for introducing this topic to him during his short visit to 
TIFR, Mumbai in 2014.  His initial insights and slides helped us start this project. His continued support has been very helpful.  
Srinivasa Murthy would like to express deep gratitude to  his thesis advisor  S. M. Hegde for his support and making this 
collaboration possible, which started during his thesis work. Srinivasa Murthy also thanks The 
Institute of Mathematical Sciences, Chennai, India, along with National Board for Higher Mathematics, India, 
for the post doctoral fellowship.

 \bibliographystyle{plain}

\end{document}